\documentclass[11pt,reqno]{amsart}
\usepackage{color}
\usepackage{graphicx,array}
\usepackage{bbm}
\usepackage{amsmath,amsthm}

\setlength{\textwidth}{6.5in}
\setlength{\oddsidemargin}{0in}
\setlength{\evensidemargin}{0in}
\setlength{\textheight}{9.0in}
\setlength{\topmargin}{-0.5in}
\setlength{\marginparwidth}{0.85in}

\newcommand{\ul}{\underline}

\newtheorem{theorem}{Theorem}[section]
\newtheorem{corollary}[theorem]{Corollary}

\newtheorem{lemma}[theorem]{Lemma}

\newtheorem{example}[theorem]{Example}

\theoremstyle{remark}

\theoremstyle{definition}
\newtheorem{definition}[theorem]{Definition}

\newcommand{\ds}{\displaystyle}

\DeclareMathOperator{\sss}{{\mathfrak{S}}}
\DeclareMathOperator{\sn}{{\sss}_{\mathit{n}}}
\DeclareMathOperator{\trace}{\mathrm{trace}}

\DeclareMathOperator{\Des}{\mathsf{Des}}
\DeclareMathOperator{\Dess}{\mathsf{Des}^{*}}
\DeclareMathOperator{\des}{\mathsf{des}}
\DeclareMathOperator{\dess}{\mathsf{des}^{*}}
\DeclareMathOperator{\Equ}{\mathsf{Equ}}
\DeclareMathOperator{\Equs}{\mathsf{Equ}^{*}}
\DeclareMathOperator{\equ}{\mathsf{equ}}
\DeclareMathOperator{\equs}{\mathsf{equ}^{*}}
\DeclareMathOperator{\Asc}{\mathsf{Asc}}
\DeclareMathOperator{\Ascs}{\mathsf{Asc}^{*}}

\DeclareMathOperator{\ascs}{\mathsf{asc}^{*}}
\DeclareMathOperator{\DEA}{\mathsf{DEA}}
\DeclareMathOperator{\DEAS}{\mathsf{DEA}^{*}}
\DeclareMathOperator{\rel}{\mathsf{rel}}
\DeclareMathOperator{\Minimal}{\mathrm{Minimal}}
\DeclareMathOperator{\minimal}{\mathrm{minimal}}
\DeclareMathOperator{\NoEquals}{\mathrm{NoEquals}}
\DeclareMathOperator{\PegSet}{\mathrm{PegSet}}
\DeclareMathOperator{\EdgeSet}{\mathrm{EdgeSet}}
\DeclareMathOperator{\PegpairsSet}{\mathrm{PegpairsSet}}

\newcommand{\Keys}[3]{\mathsf{Keys}^{#3}(#1,#2)}
\newcommand{\stirling}[2]{\left\{ #1 \atop #2 \right\}}

\newcommand{\Recon}[2]{{\mathsf{Recon}}(#1,#2)}
\newcommand{\MMmatrix}[3]{{\mathfrak{M}}^{#1}_{#3} {#2}}
\newcommand{\RRmatrix}[2]{{\mathfrak{R}}^{#1}_{#2}}
\newcommand\Colours[2]{{\mathsf{Colors}}({#1,#2})}
\newcommand{\Weuler}{\mathsf{WordEuler}}
\newcommand{\weuler}{\mathsf{wordeuler}}
\newcommand{\Weulers}{\mathsf{WordEuler}^{*}}
\newcommand{\weulers}{\mathsf{wordeuler}^{*}}
\newcommand{\Represent}{\mathsf{Represent}}
\newcommand{\nww}{\mathsf{nww}}
\newcommand{\nwwnip}{\mathsf{nwwnip}}
\newcommand{\npww}{\mathsf{npww}}
\newcommand{\NPWW}{\mathsf{NWWNIP}}
\newcommand{\wdm}{\mathsf{WDM}}
\newcommand{\peg}{p}
\newcommand{\Pegs}{\mathrm{Pegs}}
\newcommand{\WW}{W} 
\newcommand{\AllPosets}{\mathsf{AllPosets}}
\newcommand{\multip}[2]{\mathsf{multip}_{#1}(#2)}
\newcommand{\NWW}{\mathsf{NWW}}
\newcommand{\N}{\mathbb{N}}

\hyphenation{dia-gram}
\hyphenation{dia-grams}

\title{Web worlds, web-colouring matrices, and web-mixing matrices}
\author[M. Dukes]{Mark Dukes} 
\author[E. Gardi]{Einan Gardi} 
\author[E. Steingr\'imsson]{Einar Steingr\'imsson} 
\author[C. D. White]{Chris D. White} 

\address{MD \& ES: Department of Computer and Information Sciences,
  University of Strathclyde, Glasgow, G1 1XH, UK} \address{EG: School
  of Physics and Astronomy, University of Edinburgh, Edinburgh, EH9
  3JZ, UK} \address{CDW: School of Physics and Astronomy, University
  of Glasgow, Glasgow, G12 8QQ, UK}

\thanks{MD \& ES: The work presented here was supported by grant no.\
  090038013 from the Icelandic Research Fund.}

\keywords{web diagram, web world, edge colouring, reconstruction, poset}

\begin{document}
\thispagestyle{empty}
\maketitle

\begin{abstract}
We introduce a new combinatorial object called a \emph{web world} that consists of a set of \emph{web diagrams}.  The diagrams of a web world are generalizations of graphs, and each is built on the same underlying graph.  Instead of ordinary vertices the diagrams have \emph{pegs}, and edges incident to a peg have different heights on the peg.  The web world of a web diagram is the set of all web diagrams that result from permuting the order in which endpoints of edges appear on a peg. The motivation comes from particle physics, where web diagrams arise as particular types of Feynman diagrams describing scattering amplitudes in non-Abelian gauge (Yang-Mills) theories.   To each web world we associate two matrices called the \emph{web-colouring matrix} and \emph{web-mixing matrix}.  The entries of these matrices are indexed by ordered pairs of web diagrams $(D_1,D_2)$, and are computed from those colourings of the edges of $D_1$ that yield $D_2$ under a transformation determined by each colouring.

We show that colourings of a web diagram 
(whose constituent indecomposable diagrams are all unique) 
that lead to a reconstruction of the diagram are equivalent to 
order-preserving mappings of certain partially ordered sets (posets) 
that may be constructed from the web diagrams.
For web worlds whose web graphs have all edge labels equal to 1, 
the diagonal entries of web-mixing and web-colouring matrices 
are obtained by summing certain
polynomials determined by the descents in permutations in the
Jordan-H\"older set of all linear extensions of the associated
poset.
We derive tri-variate generating generating functions for the number of web worlds according to three statistics and enumerate the number of different web diagrams in a web world.
Three special web worlds are examined in great detail, and the traces of the 
web-mixing matrices calculated in each case.
\end{abstract}

\section{Introduction}
In this paper we study combinatorial objects called {\it{web worlds}}. 
A web world consists of a set of diagrams that we call {\it{web diagrams}}.
The motivation for introducing these comes from
particle physics, where web diagrams arise as particular types of
Feynman diagrams describing scattering amplitudes of quantum fields in
non-Abelian gauge (Yang-Mills) theories.  These structures, which
do not seem to have been studied previously from the purely
combinatorial point of view, may be thought of as generalisations of
simple graphs where each edge has a height associated to each of its
endpoints.  To avoid confusion with proper graphs, the vertices of our
web diagrams will be referred to as {\it{pegs}}.  The different edges
that connect onto a given peg are strictly ordered by their heights as
illustrated in the example in Figure~\ref{first:example}.  The
\emph{web world} of a web diagram is the set of all web diagrams that
result from permuting the order in which endpoints of edges appear on
a peg, that is, the heights of the edges incident with that peg.  To
each web world we associate two matrices, $\MMmatrix{}{(x)}{}$ and
$\RRmatrix{}{}$, which are called the {\it{web-colouring matrix}} and
{\it{web-mixing matrix}}, respectively.  The entries of these matrices
are indexed by ordered pairs of web diagrams, and are computed from
those colourings of the edges of one of the two web diagrams that
yield the other one under a certain transformation 
(Definition~\ref{twonine}) determined by each colouring.

Web diagrams, web worlds and their web-mixing matrices play an important
role in the study of scattering amplitudes in quantum chromodynamics
(QCD)~\cite{WIM,MSS,GPO,OTR}. While not essential for the
combinatorial aspects dealt with here, we briefly describe the physics
context in which they arise.
\begin{itemize}
\item{} In general, in perturbative quantum field theory, Feynman
  diagrams provide means of computing scattering amplitudes: each
  diagram corresponds to an algebraic expression depending on the
  momenta and other quantum numbers of the external
  particles. Specifically in QCD these diagrams describe the
  collisions of energetic quarks and gluons (generically referred to
  as partons, the constituents of protons and neutrons); these carry a
  matrix-valued Yang-Mills charge belonging respectively to the
  fundamental or adjoint representations of the ${\rm SU}(N)$
  group. These matrices do not commute, hence the name non-Abelian
  gauge theory.
\item{} The perturbation expansion of a scattering amplitude, formally
  a power series in the coupling constant (the strength of the
  interaction) amounts to a loop expansion: the leading term is the
  sum of all diagrams having a tree topology connecting all external
  particles, while an $n$-th order term in the loop expansion
  corresponds to the sum of all diagrams involving $n$ loops, which is
  obtained by dressing a tree diagram by $n$ additional gluon
  exchanges. In a non-Abelian theory gluons may connect to each other
  via 3 and 4 gluon vertices.
\item{} A salient feature of scattering amplitudes is that starting at
  one loop they involve long-distance (``soft'') singularities.  In
  order to study the structure of these singularities one may consider
  an \emph{Eikonal amplitude}, which is a simplified version of the
  QCD scattering amplitude that fully captures its singularities.
  This simplification is ultimately a consequence of the fact that all
  singularities arise due to ``soft'' (low energy) fields, and the
  quantum-mechanical incoherence of ``soft'' and ``hard'' (high
  energy) fields.  Eikonal amplitudes are formed by representing each
  external parton by an Eikonal line (or ``Wilson line''), a single
  semi-infinite line extending from the origin to infinity in the
  direction fixed by the momentum of the external parton, and which
  carries the same matrix-valued Yang-Mills charge.
\item{} Eikonal amplitudes \emph{exponentiate}: they can be written as
  an exponential where the exponent takes a particularly simple
  form. Web diagrams arise as a direct Feynman--diagrammatic
  description of this exponent~\cite{WIM,MSS}, namely, they define the
  coefficients in the loop expansion of the exponent of the Eikonal
  amplitude.  In these diagrams the Eikonal lines are the \emph{pegs}
  introduced above. Loops are then formed by connecting additional
  gluons between these Eikonal lines: these are the \emph{edges}
  mentioned above. Physically, the latter represent ``soft'' fields
  whose energy is small compared to energies of the external
  partons. Since each gluon emission along a given Eikonal line is
  associated with a non-commuting ${\rm SU}(N)$ matrix, the order of
  these emissions (corresponding to the \emph{height} of the edge on
  the peg in the above terminology) is important, and distinguishes
  between different web diagrams belonging to the same web world.
\end{itemize}
Following \cite{OTR} we consider in this paper a particular subset of
the web diagrams, those generated by any number of single gluon
exchanges between the Eikonal lines. We thus exclude from the present
discussion any web diagrams that includes 3 or 4 gluon vertices, as
well as ones where a gluon connects an Eikonal line to itself. 
The generalization to these cases is interesting and will be addressed by the authors in a future paper.

The present paper is a first combinatorial study of these web worlds
and proves several general results.  
One of our results provides a
rich connection between those colourings of a web diagram that lead to
a unique reconstruction of the diagram on one hand, and
order-preserving mappings of certain posets (partially ordered sets)
on the other.  This connection is then used to show that the diagonal
entries of a web-mixing matrix are obtained by summing certain
polynomials determined by the descents in permutations in the
Jordan-H\"older set of all linear extensions of the associated
poset.

As for results on the enumeration of web worlds we give two
tri-variate generating functions, recording statistics that keep track
of the number of pegs, the number of edges and the number of pairs of
pegs that are joined by some edge. 
We give a generating function for the number of proper web worlds
in terms of three different statistics.
We also obtain an expression for
the number of different web diagrams in a given web world, in terms of
entries of a matrix that represents the web world.

Three special cases of web worlds are examined in great detail, and
the traces of the web-mixing matrices calculated in each case.  The
first of these cases concerns web worlds whose web diagrams are
uniquely encoded by permutations, because each peg, except for the
last one, contains the left endpoint of a unique edge and all right
endpoints are on the last peg.  In this case we give exact values for
entries of the web-mixing matrices in terms of a permutation statistic
on the pair of permutations that index each entry.  The other two
cases are very different from the first, but quite similar to each
other.  Namely, these are web worlds where each pair of adjacent pegs
has a unique edge between them and there are no other edges.  They
differ in that in one case we look at the sequence of pegs cyclically
and demand that one edge join the first and the last peg.  Despite the
small difference in definition, the cyclic setup changes the trace of
the resulting web-mixing matrices greatly.

Generalising the last two cases just mentioned we define a class of
web worlds that we call {\it{transitive web worlds}}.  Remarkably, the
set of transitive web worlds is in one-to-one correspondence with the
$(2+2)$-free posets.  It seems likely that this connection will
engender some interesting results, given the fast growing literature
on these posets (see \cite{tpt,composition,djk,robert,remmel} for
references).

The outline of this paper is as follows.  Section~2 defines web
diagrams, web worlds and operations on web diagrams such as colourings
and how the partitioning of a web diagram induced by a colouring
describes the construction of another web diagram.  This section also
defines the web-colouring, $\MMmatrix{}{(x)}{}$, and web-mixing,
$\RRmatrix{}{}$, matrices. Section~3 looks at colourings of a
web diagram whose corresponding reconstructions result in the same
web diagram.  Section 4 gives generating functions for web worlds in
terms of three statistics; the number of pegs, the number of edges, and
the number of pairs of pegs that have at least one edge between
them. It also gives an expression for the number of different web
diagrams arising from a web world.  
Section~5 is an interlude on some material that will be used for 
determining web mixing matrices for particular classes of web worlds 
in Sections 6 to 8

In Section~6 we study a certain web world on $n+1$ pegs whose diagrams
are uniquely encoded by permutations of the set $\{1,\ldots,n\}$.
Sections 7 and 8 look at two web worlds with vastly different
properties from that of Section~6, but closely related to one another,
where each pair of adjacent pegs is connected by a unique edge.
Section~9 defines {\it{transitive web worlds}} which are shown to be in
one-to-one correspondence with $(2+2)$-free posets and thus with a
host of other families of combinatorial structures.  The paper ends
with some challenging open problems in Section~10.

\section{Web-diagrams}

Intuitively, a web diagram consists of a sequence of pegs and a set of
edges, each connecting two pegs, as illustrated in the example in
Figure~\ref{first:example}.  In the following formal definition of a
web diagram $D$, a 4-tuple $(x_j,y_j,a_j,b_j) \in D$ will represent an
edge whose left vertex has height $a_j$ on peg $x_j$ and whose right
vertex has height $b_j$ on peg $y_j$.

\begin{definition}
  A {\it{web diagram}} on $n$ pegs having $L$ edges is a collection
  $D=\{e_j=(x_j,y_j,a_j,b_j): 1\leq j \leq L\}$ of 4-tuples that
  satisfy the following properties:
  \begin{enumerate}
  \item[(i)] $1\leq x_j < y_j \leq n$ for all $j \in \{1,\ldots,L\}$.
  \item[(ii)] For $i \in \{1,\ldots,n\}$ let $\peg_i(D)$ be the number
    of $j$ such that $x_j$ or $y_j$ equals $i$, that is, the number of
    edges in $D$ incident with peg $i$.  Then $$\{b_j: y_j=i\} \cup
    \{a_j: x_j = i\} ~=~ \{1,2,\ldots,\peg_i(D)\}.$$
  \end{enumerate}
  We write $\Pegs(D)=(\peg_1(D),\ldots,\peg_n(D))$.  Condition (ii)
  says that the labels of the $\peg_i(D)$ vertices on peg $i$, when
  read, say, from top to bottom, are a permutation of the set
  $\{1,\ldots,\peg_i(D)\}$.
\end{definition}

Given a web diagram $D=\{e_j=(x_j,y_j,a_j,b_j): 1\leq j \leq L\}$, 
let 
\begin{align*}
\PegSet(D) &= \{x_1,\ldots,x_L,y_1,\ldots,y_L\},\\
\EdgeSet(D) &= D,\\
\PegpairsSet(D) &= \{(x_1,y_1),\ldots,(x_L,y_L)\}.
\end{align*}

\begin{example}
The web diagram given in Figure~\ref{first:example} is 
$$D=\{
(1,2,1,1),
(1,7,2,2),
(2,4,2,3),
(3,4,1,1),
(3,6,2,4),
(4,6,2,3),
(4,6,4,2),
(5,6,1,1),
(5,7,2,1)
\}.$$
The number of vertices on each peg is $\Pegs(D)=(2,2,2,4,2,4,2)$. 
Also, 
\begin{align*}
\PegSet(D)&=\{1,2,3,4,5,6,7\},\\
\PegpairsSet(D) &= \{
(1,2),
(1,7),
(2,4),
(3,4),
(3,6),
(4,6),
(5,6),
(5,7)\}.
\end{align*}
Notice that in this particular example $\PegpairsSet(D)$ has size one less than $\EdgeSet(D)$ since there are two edges that connect pegs 4 and 6.
\end{example}

\begin{figure}
\centerline{
    \begin{tabular}{ccc}
      \includegraphics[scale=0.8]{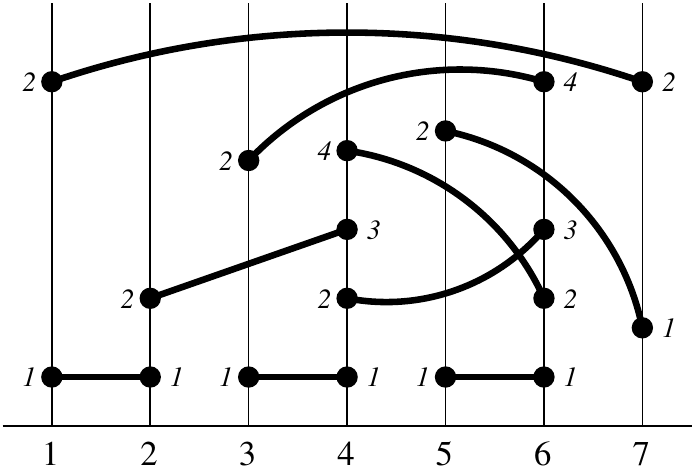} &&
      \includegraphics[scale=0.8]{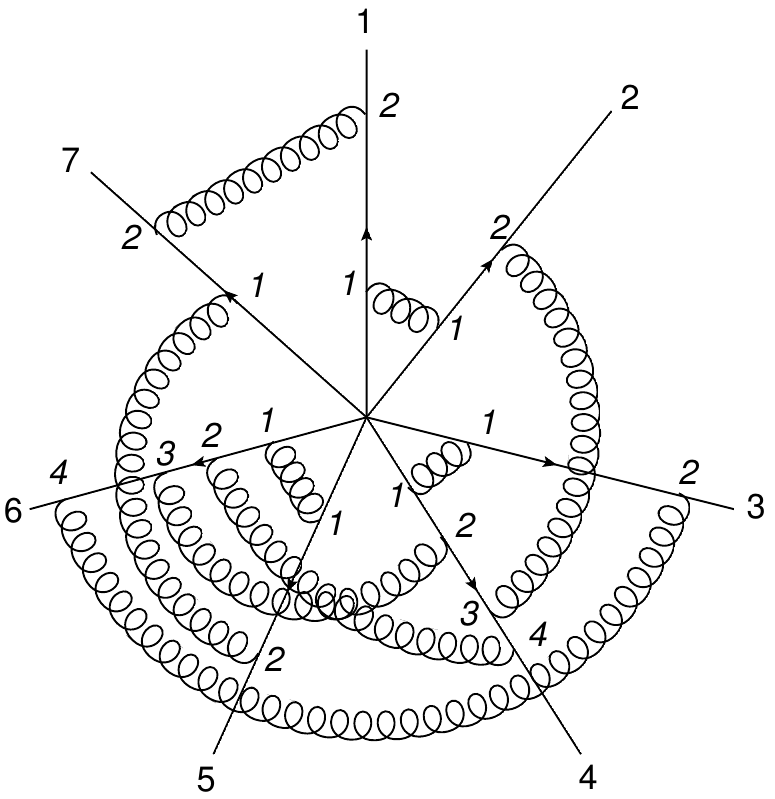}
    \end{tabular}
}
\caption{\label{first:example} In the diagram on the left the indices
  of the pegs are shown at the bottom.  The heights of the endpoints
  of the edges are shown in italics at each endpoint.  The unique edge between
  pegs 3 and 6 is represented by the 4-tuple $(3,6,2,4)$ since the
  left endpoint of the edge (on peg 3) has height 2 and the right
  endpoint of the edge (on peg 6) has height 4.  The diagram on the
  right is the Feynman diagram illustration of the web diagram.}
\end{figure}

We now define the sum of two web diagrams.
%
%
\begin{definition}
  Let $D = \{e_j=(x_j,y_j,a_j,b_j): 1\leq j \leq L\}$ and
  $D'=\{e'_j=(x'_j,y'_j,a'_j,b'_j): 1\leq j \leq L'\}$ be two
  web diagrams with $\PegSet(D),\PegSet(D') \subseteq \{1,\ldots,n\}$.  
  The {\textit{sum}} $D\oplus D'$ is the
  web diagram obtained by placing the diagram $D'$ on top of $D$;
$$
D\oplus D' ~=~ D \cup \{(x'_j,y'_j,a'_j + \peg_{x'_j}(D), b'_j +
\peg_{y'_j}(D)): 1\leq j \leq L'\}.
$$ 
If there exist two non-empty web diagrams $E$ and $F$ such that $D=E
\oplus F$ then we say that $D$ is {\it{decomposable}}.  Otherwise we
say that $D$ is {\it{indecomposable}}.
\end{definition}

\begin{example}\label{ex-diag-poset}
  Consider the following two web diagrams:
  $D_1=\{(1,4,1,1),(2,6,1,2),(2,6,2,1)\}$ and
  $D_2=\{(1,2,1,1),(3,5,1,1),(5,6,2,1)\}$.  For $D_1$ we have 
  $(\peg_1(D_1),\ldots,\peg_6(D_1))=(1,2,0,1,0,2)$ and so
  \begin{align*}
\lefteqn{D_1\oplus D_2} \\
  &= \{(1,4,1,1),(2,6,1,2),(2,6,2,1)\} ~ \cup ~ \left\{(1,2,1+\peg_1(D_1),1+\peg_2(D_1)),\right. \\
 & \quad \left. (3,5,1+\peg_3(D_1),1+\peg_5(D_1)),(5,6,2+\peg_5(D_1),1+\peg_6(D_1))\right\} \\
  &= \{(1,4,1,1),(2,6,1,2),(2,6,2,1)\} ~\cup ~ \{(1,2,1+1,1+2),(3,5,1+0,1+0),(5,6,2+0,1+2)\} \\
  &= \{(1,4,1,1),(2,6,1,2),(2,6,2,1),(1,2,2,3),(3,5,1,1),(5,6,2,3)\}.
\end{align*}
\end{example}

%
%
\begin{definition} \label{twofive} Let $D$ be a web diagram and $X
  \subseteq D$. Let $\rel(X)$ be the web diagram that results from
  re-labeling the third and fourth entries of every element of $X$ so
  that the set of labels of points on a peg $i$ of $\rel(X)$ are
  $\{1,2,\ldots,\ell_i\}$ for some $\ell_i$, and for all pegs.  We
  call $\rel(X)$ a {\it{subweb diagram}} of $D$.
\end{definition}

Note that we do not remove empty pegs.
The reason we do not remove them is that we will be defining an operation
which combines subweb diagrams of a diagram. 
This will be made clear in Definitions ~\ref{twonine} and \ref{rsk}.

\begin{example}
  Let $D$ be the web diagram in Figure~\ref{first:example}.  Let $$X =
  \{(1,7,2,2),(3,6,2,4),(4,6,2,3),(5,6,1,1)\}.$$ Then
  $\rel(X)=\{(1,7,1,1),(3,6,1,3),(4,6,1,2),(5,6,1,1)\}$.  See
  Figure~\ref{rel:example}.
\end{example}

\begin{figure}
  \begin{tabular}{ccc}
    \includegraphics[scale=0.8]{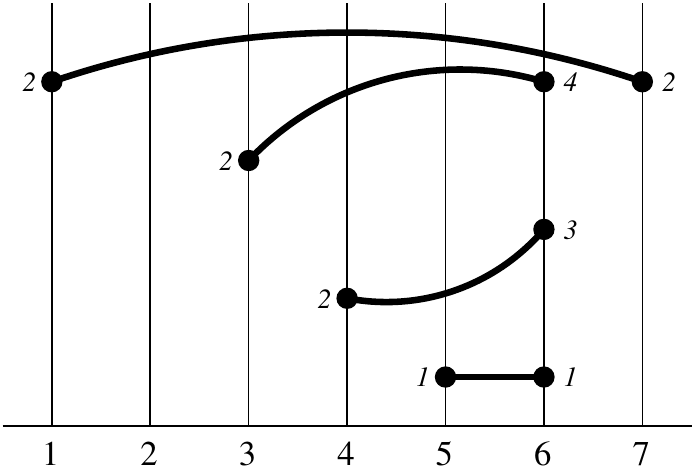} &&
    \includegraphics[scale=0.8]{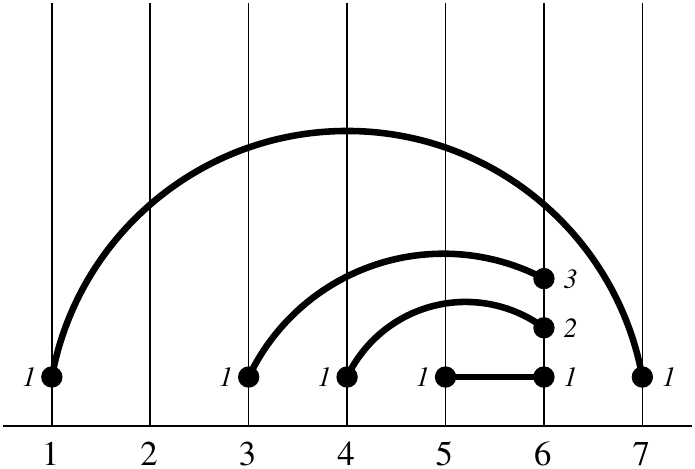} 
  \end{tabular}
  \caption{The transformation $X\to \rel(X)$\label{rel:example}.}
\end{figure}

%
%
\begin{definition} \label{twoseven} 
  Suppose that $D=\{(x_j,y_j,a_j,b_j): 1\leq j \leq L\}$ is a
  web diagram on $n$ pegs. Let $\Pegs(D) =
  (\peg_1(D),\ldots,\peg_n(D))$.  Let $\pi=(\pi^{(1)},\ldots,
  \pi^{(n)})$ be a sequence of permutations where $\pi^{(i)}$ is a
  permutation of the set $\{1,\ldots,\peg_i(D)\}$.  Let $\pi (D)$ be
  the web diagram that results from moving the vertex at height $j$ on
  peg $i$ to height $\pi^{(i)}(j)$, for all $j$ and $i$.  Finally let
  $\WW(D) = \{\pi(D): \pi \in \Pegs(D)\}$, the set of all possible web
  diagrams that can be obtained from $D$.  We call $\WW(D)$ the
  {\em{web world}} of $D$.
\end{definition}

%
%
\begin{example} 
  If $D=\{(1,2,1,2),(1,2,2,1)\}$ then $\WW(D)=\{D,E\}$ where
  $E=\{(1,2,1,1),(1,2,2,2)\}$.
\end{example}

In terms of physics, a particular subset of web worlds are of interest. 

\begin{definition} \label{graphweb}
Let $W$ be a web world and $D=\{(x_i,y_i,a_i,b_i) ~:~ 1\leq i \leq L\} \in W$.
Let $G(W)=(V,E,\ell)$ be the edge-labeled simple graph where $V=\PegSet(D)$,
\begin{align*}
E &= \left\{ \{x_1,y_1\},\{x_2,y_2\},\ldots,\{x_L,y_L\} \right\}\\
\noalign{and}
\ell(\{x,y\}) &= |\{1\leq i \leq L ~;~ x_i = \min(x,y) \mbox{ and } y_i = \max(x,y)\}|
\end{align*}
for all $\{x,y\} \in E$. We call $G(W)$ the {\it{web graph}} of the web world $W$.
\end{definition}

The web graph is the graph that results from `forgetting' the heights of all endpoints of edges in a web diagram. 
Another way to say this is that this is the graph one sees by looking down onto a web diagram, so that the pegs appear as points, and where each edge of $G(W)$ is weighted by the number of edges between the two pegs containing its endpoints in the original diagram.
For example, the web graph corresponding to the web world generated by the diagram in Figure~\ref{first:example} is:

\centerline{\includegraphics[scale=0.75]{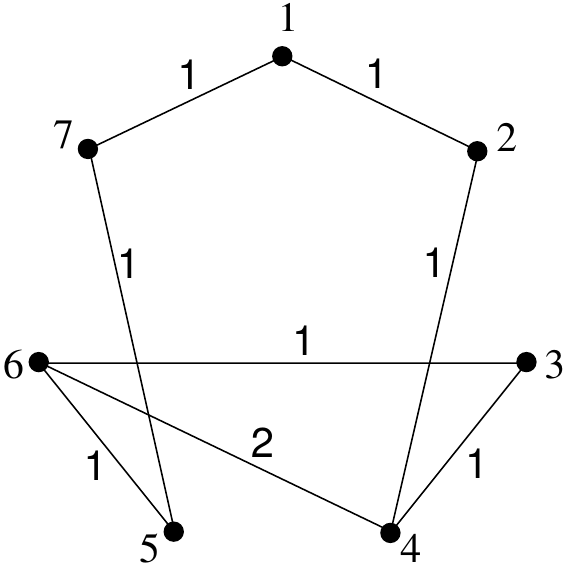}}

\begin{definition}
A web world $W$ is called {\it{proper}} if the web graph $G(W)$ is connected.
\end{definition}

Proper web worlds have been called `webs' in references 
~\cite{WIM,OTR,GPO,MSS}.
Their significance is that they contribute to the exponent of an Eikonal 
amplitude, while web worlds corresponding to disconnected web 
graphs do not. The three web worlds we look at later on is this paper are 
proper web worlds. We now introduce colouring and reconstruction 
operations on our web diagrams. 

%
%
\begin{definition} \label{twonine} Suppose that
  $D=\{e_i=(x_i,y_i,a_i,b_i): 1\leq i \leq L\}$ is a web diagram on
  $n$ pegs, and $\ell \leq L$ a positive integer.  A {\it{colouring}}
  $c$ of $D$ is a surjective function $c:\{1,\ldots,L\} \to
  \{1,\ldots,\ell\}$.  Let $D_c(j)= \{e_i \in D : c(i)=j\}$ for all
  $1\leq j \leq \ell$, the set of all those edges of $D$ that are
  coloured $j$.  The \emph{reconstruction $\Recon{D}{c}\in W(D)$ of
    $D$ according to the colouring $c$} is the web diagram
$$\Recon{D}{c} = \rel(D_c(1)) \oplus \rel(D_c(2)) \oplus \cdots \oplus \rel(D_c(\ell)).$$
\end{definition}

%
%
\begin{definition} \label{rsk}
  Let $D$ be a web diagram on $n$ pegs, and let $c$ be an
  $\ell$-colouring of $D$.  We say that the colouring $c$ is
  {\em{self-reconstructing}} if $\Recon{D}{c}=D$.
\end{definition}

%
%
Given $\WW=\WW(D)$ for some web diagram $D$ on $n$ pegs, and $D_1,D_2 \in \WW(D)$, let
$$F(D_1,D_2,\ell)=\{\mbox{$\ell$-colourings $c$ of $D_1$} ~:~ \Recon{D_1}{c}=D_2\}$$
and $f(D_1,D_2,\ell)=|F(D_1,D_2,\ell )|$.  Let $\MMmatrix{(W)}{(x)}{}$
be the matrix whose $(D_1,D_2)$-entry is 
$$\MMmatrix{(W)}{(x)}{D_1,D_2} = \sum_{\ell \geq 1} x^{\ell} f(D_1,D_2,\ell).$$
We call this matrix the {\it{web-colouring matrix}}.  From a physics
perspective, another matrix is of more immediate interest.  Let
$\RRmatrix{(W)}{}$ be the matrix whose $(D_1,D_2)$ entry is
\begin{align*}
  \RRmatrix{(W)}{D_1,D_2} &= \sum_{\ell \geq 1} \dfrac{(-1)^{\ell -1}}{\ell} f(D_1,D_2,\ell).
\end{align*}
We call $\RRmatrix{(W)}{}$ the {\it{web-mixing matrix}} of $W$.  It is
straightforward to show that the $(D_1,D_2)$ entries of
$\MMmatrix{(W)}{(x)}{}$ and $\RRmatrix{(W)}{}$ are related via the
following formula:
\begin{align}\label{calculateR}
  \RRmatrix{(W)}{D_1,D_2} &= \int_{-1}^{0} \dfrac{\MMmatrix{(W)}{(x)}{D_1,D_2}}{x} dx \;=\; -\int_0^1 \dfrac{\MMmatrix{(W)}{(-x)}{D_1,D_2}}{x} dx .
\end{align}

In the present paper, the basic problems we consider are as follows:
Given a web world $\WW$,
  \begin{enumerate}
  \item[(i)] What can we say about the matrices $\MMmatrix{(W)}{(x)}{}$ and 
	$\RRmatrix{(W)}{}$, their entries, trace and rank?
  \item[(ii)] Can we determine the entries of $\MMmatrix{(W)}{(x)}{}$ and $\RRmatrix{(W)}{}$
    for special cases?
  \end{enumerate}

In Gardi and White~\cite{GPO} it was shown that
\begin{theorem}\cite{GPO}\label{cethm}
  Let $\WW$ be a web world.
  \begin{enumerate}
  \item[(i)] The row sums of $\RRmatrix{(W)}{}$ are all zero.
  \item[(ii)] $\RRmatrix{(W)}{}$ is idempotent.
  \end{enumerate}
\end{theorem}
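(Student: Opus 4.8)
The plan is to handle the two parts separately, in each case peeling off the web-diagram content to leave a purely numerical generating-function identity.

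For part~(i), fix a row index $D_1$ and let $L=|D_1|$ be its number of edges. Every $\ell$-colouring $c$ of $D_1$ produces exactly one reconstruction $\Recon{D_1}{c}\in W$, so summing $f(D_1,D_2,\ell)$ over all $D_2\in W$ simply counts the surjections $\{1,\dots,L\}\to\{1,\dots,\ell\}$; write $S(L,\ell)=\ell!\,\stirling{L}{\ell}$ for this number. The row sum is therefore $\sum_{\ell\ge1}\frac{(-1)^{\ell-1}}{\ell}S(L,\ell)$, a quantity independent of the particular web world. I would evaluate it with the exponential generating function $\sum_{L\ge0}S(L,\ell)\,t^L/L!=(e^{t}-1)^{\ell}$: multiplying by $(-1)^{\ell-1}/\ell$ and summing over $\ell$ turns the right-hand side into $\log\!\bigl(1+(e^{t}-1)\bigr)=t$. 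Hence the row sum equals $L!\,[t^{L}]\,t$, which vanishes for every $L\ge2$; the single-edge world, where the sum equals $1$, is the degenerate exception.

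For part~(ii) the engine is an associativity (composition) lemma for reconstructions. Identify the edges of $D_2=\Recon{D_1}{c}$ with those of $D_1$ through the bijection implicit in Definition~\ref{twonine}, so that an $m$-colouring $c'$ of $D_2$ pulls back to the edges of $D_1$. The claim is that, for an $\ell$-colouring $c$ of $D_1$ and an $m$-colouring $c'$ of $D_2$, one has $\Recon{D_2}{c'}=\Recon{D_1}{\tilde c}$, where $\tilde c$ is the colouring whose colour classes are the nonempty fibres of $e\mapsto(c'(e),c(e))$ ordered lexicographically with $c'$ primary. This is precisely the statement that iterating ``apply $\rel$ to each colour class and stack with $\oplus$'' sorts the endpoints on every peg by the key $(c'\text{-colour},\,c\text{-colour},\,\text{height in }D_1)$, and it follows from the associativity of $\oplus$. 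Reading the correspondence backwards, a $k$-colouring $\tilde c$ of $D_1$ with $\Recon{D_1}{\tilde c}=D_3$ arises from a pair $(c,c')$ exactly by labelling its $k$ ordered colour classes with pairs $(\alpha_i,\beta_i)\in\{1,\dots,m\}\times\{1,\dots,\ell\}$ that are strictly lexicographically increasing in $i$ and surjective in each coordinate. Such labellings are the $k$-element subsets of the $m\times\ell$ grid meeting every row and every column; writing $N(\ell,m,k)$ for their number (which depends only on $\ell,m,k$), this gives $\sum_{D_2}f(D_1,D_2,\ell)f(D_2,D_3,m)=\sum_{k}N(\ell,m,k)\,f(D_1,D_3,k)$.

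Substituting into the definition of $\RRmatrix{(W)}{}$, the $(D_1,D_3)$ entry of $(\RRmatrix{(W)}{})^{2}$ becomes
\[
\sum_{k\ge1}\Bigl(\sum_{\ell,m\ge1}\frac{(-1)^{\ell-1}}{\ell}\,\frac{(-1)^{m-1}}{m}\,N(\ell,m,k)\Bigr)f(D_1,D_3,k),
\]
so idempotency reduces to the single identity that the inner bracket equals $(-1)^{k-1}/k$ for every $k$. I would prove this via the size generating function $\Phi_{m,\ell}(w)=\sum_{k}N(\ell,m,k)\,w^{k}$: an inclusion--exclusion over which columns are hit (each row independently choosing a nonempty set of columns) gives $\Phi_{m,\ell}(w)=\sum_{j}(-1)^{j}\binom{\ell}{j}\bigl((1+w)^{\ell-j}-1\bigr)^{m}$, whence, using $\sum_{m\ge1}\frac{(-1)^{m-1}}{m}u^{m}=\log(1+u)$, one finds $\sum_{m\ge1}\frac{(-1)^{m-1}}{m}\Phi_{m,\ell}(w)=\log(1+w)\sum_{j}(-1)^{j}\binom{\ell}{j}(\ell-j)$, a quantity equal to $\log(1+w)$ when $\ell=1$ and to $0$ when $\ell\ge2$. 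A final summation over $\ell$ against $(-1)^{\ell-1}/\ell$ leaves exactly $\log(1+w)=\sum_{k}\frac{(-1)^{k-1}}{k}w^{k}$, which is the required identity. The main obstacle is the composition lemma: one must check carefully that each admissible labelling $(\alpha_i,\beta_i)$ genuinely corresponds to a legitimate pair $(c,c')$ with the correct intermediate diagram $D_2$, i.e.\ that the twofold use of $\rel$ and $\oplus$ really performs the asserted lexicographic sort. Once $N(\ell,m,k)$ is identified with grid subsets of full support, the numerical identity is routine.
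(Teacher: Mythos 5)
The paper contains no proof to compare yours against: Theorem~\ref{cethm} is imported verbatim from Gardi and White~\cite{GPO}, and the text around it gives no argument for either part. So your proposal has to stand on its own, and it does: both parts are correct, and the proof is self-contained and purely combinatorial. For part (i), the key observation that the sets $F(D_1,D_2,\ell)$ partition the surjective $\ell$-colourings of $D_1$ as $D_2$ ranges over $W$ (since every colouring reconstructs to exactly one diagram of $W$) is exactly right, and it collapses the row sum to $\sum_{\ell\geq 1}\frac{(-1)^{\ell-1}}{\ell}\,\ell!\stirling{L}{\ell}$, which your logarithmic generating-function computation identifies as $L!\,[t^L]\,t$. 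Your proof also honestly exposes a caveat that the paper's citation glosses over: for a single-edge web world one has $\RRmatrix{(W)}{}=(1)$, so the row sum is $1$, not $0$; the statement as printed implicitly assumes at least two edges. This is a defect of the statement, not of your argument.

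For part (ii), the crux is exactly where you put it: the composition lemma. Be aware that "associativity of $\oplus$" alone does not deliver it; the real content is your sorting-key invariant, which is correct and worth spelling out: in $D_2=\Recon{D_1}{c}$ the endpoints on each peg are ordered by the key $(c\text{-colour},\,\text{height in }D_1)$, hence a second reconstruction orders them by $(c'\text{-colour},\,c\text{-colour},\,\text{height in }D_1)$, which is the single reconstruction by the colouring whose classes are the nonempty fibres of $(c',c)$ in lexicographic order with $c'$ \emph{primary} (with $c$ primary the lemma is false, so getting this convention right is essential, and you did). Granting that, your transfer identity $\sum_{D_2}f(D_1,D_2,\ell)f(D_2,D_3,m)=\sum_k N(\ell,m,k)\,f(D_1,D_3,k)$ follows from the bijection you describe: the lex order on a $k$-subset of the $m\times\ell$ grid dictates the assignment of pairs to the ordered classes of $\tilde c$, distinctness is forced by strict lex increase, and surjectivity of $c$ and $c'$ is full support in the two coordinates. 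The inclusion-exclusion formula $\Phi_{m,\ell}(w)=\sum_j(-1)^j\binom{\ell}{j}\bigl((1+w)^{\ell-j}-1\bigr)^m$ is correct, the column sum $\sum_j(-1)^j\binom{\ell}{j}(\ell-j)=\ell\,0^{\ell-1}$ kills everything except $\ell=1$, and the resulting identity $\sum_{\ell,m}\frac{(-1)^{\ell-1}}{\ell}\frac{(-1)^{m-1}}{m}N(\ell,m,k)=\frac{(-1)^{k-1}}{k}$ checks out (e.g.\ for $k=2$: $-\tfrac12-\tfrac12+\tfrac12=-\tfrac12$). A pleasant feature of your route is that it never invokes the decomposition poset or Theorem~\ref{thm33}, so it applies equally to diagrams with repeated indecomposable components, where the paper's poset machinery does not.
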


In the next section we look at diagonal entries of the matrices
$\MMmatrix{(W)}{(x)}{}$ and $\RRmatrix{(W)}{}$.  The following theorem
is the $\MMmatrix{}{}{}$-analogue to Theorem~\ref{cethm}(i). We omit
the proof as it is rather elementary.

\begin{theorem}
  Let $D$ be a web diagram with $m=|D|$ edges. The row sums
  $\MMmatrix{(W)}{(x)}{}$ are all the same and given by the 
  ordered Bell polynomials
  $$
  \aleph_m(x)=\sum_{\ell=1}^{m} x^{\ell} {m\brace \ell} \ell !
  $$
  where $\stirling{m}{\ell}$ are the Stirling numbers of the 2nd kind.
\end{theorem}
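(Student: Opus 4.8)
The plan is to compute an arbitrary row sum directly from the definition of $\MMmatrix{(W)}{(x)}{}$ and to recognize the result as an enumeration of surjections. Fix a web diagram $D_1 \in \WW = \WW(D)$. Using the definition of the matrix entries and interchanging the two (finite) summations, the row sum indexed by $D_1$ becomes
$$
\sum_{D_2 \in \WW} \MMmatrix{(W)}{(x)}{D_1,D_2}
= \sum_{D_2 \in \WW} \sum_{\ell \geq 1} x^{\ell} f(D_1,D_2,\ell)
= \sum_{\ell \geq 1} x^{\ell} \sum_{D_2 \in \WW} f(D_1,D_2,\ell).
$$
The task thus reduces to evaluating the inner sum $\sum_{D_2 \in \WW} f(D_1,D_2,\ell)$ for each $\ell$.

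The key observation is that this inner sum counts \emph{all} $\ell$-colourings of $D_1$, with no dependence on which target $D_2$ they reconstruct to. Indeed, by Definition~\ref{twonine} every $\ell$-colouring $c$ of $D_1$ determines a single reconstruction $\Recon{D_1}{c}$, and this reconstruction lies in $\WW(D_1) = \WW$. Hence the sets $F(D_1,D_2,\ell)$, as $D_2$ ranges over $\WW$, partition the collection of all $\ell$-colourings of $D_1$: each colouring belongs to exactly one of them, namely the one indexed by $D_2 = \Recon{D_1}{c}$. Therefore
$$
\sum_{D_2 \in \WW} f(D_1,D_2,\ell) = \#\{\ell\text{-colourings of } D_1\}.
$$

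Finally, an $\ell$-colouring of $D_1$ is precisely a surjection $c : \{1,\ldots,m\} \to \{1,\ldots,\ell\}$ (since $|D_1| = m$), and the number of such surjections is the classical quantity $\stirling{m}{\ell}\,\ell!$. Substituting this into the expression for the row sum gives
$$
\sum_{D_2 \in \WW} \MMmatrix{(W)}{(x)}{D_1,D_2}
= \sum_{\ell \geq 1} x^{\ell} \stirling{m}{\ell}\,\ell!
= \sum_{\ell=1}^{m} x^{\ell} \stirling{m}{\ell}\,\ell!
= \aleph_m(x),
$$
where the range truncates at $\ell = m$ because $\stirling{m}{\ell} = 0$ for $\ell > m$. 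Since this value does not depend on the chosen $D_1$, all row sums coincide and equal the ordered Bell polynomial $\aleph_m(x)$, as claimed. The only point requiring care — and the reason the argument is genuinely clean rather than merely formal — is the partition claim in the second step, which hinges on the fact that reconstruction assigns to each colouring a \emph{unique} member of the web world; this is exactly what Definition~\ref{twonine} guarantees.
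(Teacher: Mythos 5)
Your proof is correct: the paper itself omits the proof of this theorem (declaring it ``rather elementary''), and your argument is precisely the natural one the authors evidently had in mind — the sets $F(D_1,D_2,\ell)$, as $D_2$ ranges over $\WW$, partition the surjections $c:\{1,\ldots,m\}\to\{1,\ldots,\ell\}$ because each colouring reconstructs to a unique diagram in the web world, and there are $\stirling{m}{\ell}\,\ell!$ such surjections. The one point you assert without comment, $\WW(D_1)=\WW$ for any $D_1\in\WW$, does hold (the height permutations form a group, so web worlds are orbits), and with that your argument is complete.
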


Using the recursion ${m+1\brace \ell} = {m\brace \ell-1}+ {m\brace \ell}$
in the definition of $\aleph_{m+1}(x)$ above, one finds that
the polynomials $\aleph_m(x)$ satisfy the differential equation
$x (d/dx)((x+1)\aleph_m(x)) = \aleph_{m+1}(x)$.  Equivalently,
$$
\sum_{m\geq 0} \aleph_m(x) t^m ~=~ \dfrac{1}{1 - \dfrac{xt}{1 -
    \dfrac{(x + 1)t}{1 - \dfrac{2xt}{1 - \dfrac{2(x + 1)t}{1 -
          \ddots}}}}}.
$$


\section{Self-reconstructing colourings and order-preserving maps}

In this section we will study those colourings of a web diagram $D$
that reconstruct $D$.  This is to gain insight into the diagonal
entries of the matrices $\MMmatrix{(W)}{(x)}{}$ and $\RRmatrix{(W)}{}$
and thus their traces.  
Since  $\RRmatrix{(W)}{}$ is idempotent (Theorem~\ref{cethm}), its trace is 
also its rank, which plays an important role in the physics 
context~\cite{WIM}.  In what follows we will see that every
web diagram $D$ can be decomposed and written as a sum of
indecomposable subweb diagrams.  A poset (partially ordered set) $P$
on the set of these indecomposable subweb diagrams is then formed.
Self-reconstructing colourings of $D$ are then shown to correspond to
linear extensions of $P$, to be explained below.
%
%
\begin{definition}
  Let $W$ be a web world and $D \in W$.  Suppose that $D=E_1 \oplus
  E_2 \oplus \cdots \oplus E_k$ where each $E_i$ is an indecomposable
  web diagram.  Define the partial order $P=(P,\preceq)$ as follows:
  $P=(E_1,\ldots ,E_k)$ and $E_i \preceq E_j$ if
  \begin{enumerate}
  \item[(a)] $i<j$, and
  \item[(b)] there is an edge $e=(x,y,a,b)$ in $E_i$ and an edge
    $e'=(x',y',a',b')$ in $E_j$ such that an endpoint of $e$ is below
    an endpoint of $e'$ on some peg.
  \end{enumerate}
We call $P(D)$ the \emph{decomposition poset of $D$}.
\end{definition}

\begin{example}
  Consider the diagram $D$ given in Figure~\ref{first:example}.  The
  poset $P(D)$ we get from this diagram is illustrated as follows:
  \ \\
  \centerline{\includegraphics{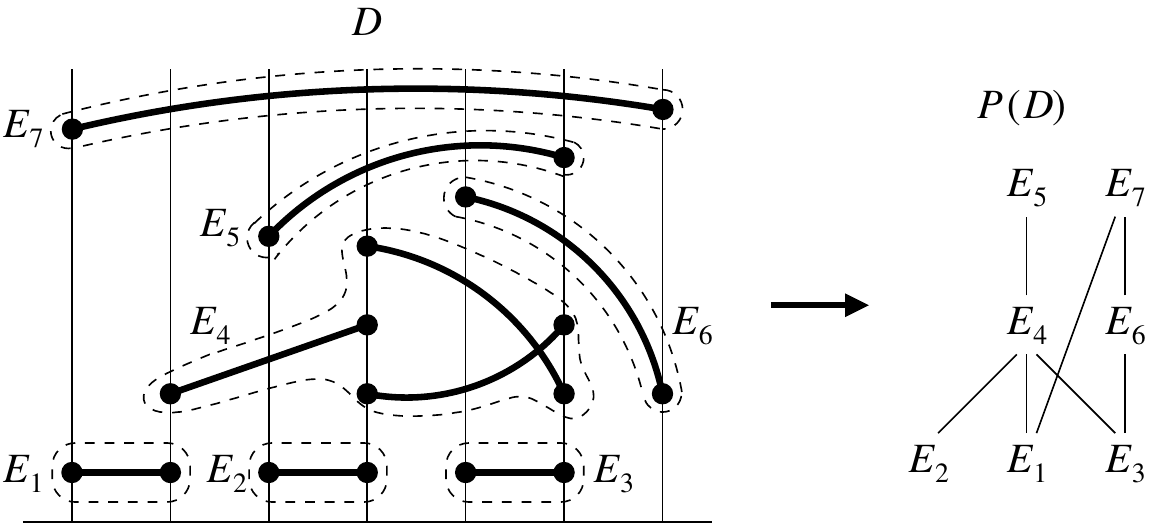}}
  \ \\
  Note that $E_1=\{(1,2,1,1)\}$, $E_2=\{(3,4,1,1)\}$,
  $E_3=\{(5,6,1,1)\}$, $E_4=\{(2,4,2,3),(4,6,2,3),$ $(4,6,4,2)\}$,
  $E_5=\{ (3,6,2,4)\}$, $E_6=\{ (5,7,2,1)\}$, and $E_7=\{(1,7,2,2)\}$.
\end{example}

Before we explain the relation between the linear extensions of $P(D)$
and self-reconstructing colourings we need some background.  Given any
two posets $P$ and $Q$, a map $f:P\to Q$ is \emph{order-preserving}
if, for all $x$ and $y$ in $P$, $x\preceq_P y$ implies $f(x) \preceq_Q
f(y)$.  Let $p$ be the number of elements in a poset $P$.  A
\emph{linear extension} of $P$ is an order-preserving bijection
$f:P\to [p]$ where $[p]=\{1,\ldots,p\}$ is equipped with the usual
order on the integers.  Each linear extension of $P$ can be
represented by a permutation of the elements of $P$, where the first
element in the permutation is that element $x$ of $P$ for which
$f(x)=1$ and so on.  The set of these permutations is called the
\emph{Jordan-H\"older set of $P$}, and denoted $\mathcal{L}(P)$.

Recall that a descent in a permutation $\pi=a_1a_2\ldots a_n$ is an
$i$ such that $a_i>a_{i+1}$, and let $\des(\pi)$ be the number of
descents in $\pi$. The first part of the following lemma is from \cite[Theorem 3.15.8]{ec1}.
The second part follows from the first part using the inclusion-exclusion principle.

\begin{lemma}\label{lemma-ec1}
  Let $P$ be a poset with $p$ elements, and let $\Omega(P,m)$ be the
  number of order preserving maps $\sigma:P \to [m]$.  Then
$$
\sum_{m\geq 0} \Omega(P,m) x^m = \dfrac{1}{(1-x)^{p+1}} \sum_{\pi \in
  \mathcal{L}(P)} x^{1+\des(\pi)}.
$$ 
Let $\Theta(P,m)$ be the number of surjective order-preserving maps
from $P$ to $[m]$.  Define
$\Theta(P,0)=\Omega(P,0)=0$. Then we have
$$
\Omega(P,m) = \sum_k {m \choose k} \Theta(P,k), \qquad \Theta(P,m) =
\sum_k {m \choose k}(-1)^{m-k} \Omega(P,k).
$$

\end{lemma}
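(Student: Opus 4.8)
The plan is to handle the two parts in turn. For the first part I would not reprove the identity but simply invoke the theory of $P$-partitions: the displayed equation is precisely the generating-function form of the fundamental theorem of $P$-partitions, so I would cite \cite[Theorem 3.15.8]{ec1}. The mechanism behind it, which I would recall in a sentence for context, is that every order-preserving map $\sigma:P\to[m]$ induces a unique linear extension $\pi\in\mathcal{L}(P)$ by breaking ties among equal values of $\sigma$ according to a fixed labelling of $P$; the maps inducing a given $\pi$ are then enumerated by a chain of weak inequalities with a forced strict step at each descent of $\pi$, and summing the resulting contributions over all $\pi\in\mathcal{L}(P)$ produces the numerator $\sum_{\pi}x^{1+\des(\pi)}$ over the denominator $(1-x)^{p+1}$.

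For the second part, I would first establish the forward identity $\Omega(P,m)=\sum_k\binom{m}{k}\Theta(P,k)$ by a direct bijective count. Given an order-preserving map $\sigma:P\to[m]$, let $S=\sigma(P)$ be its image and set $k=|S|$. Endowing $S$ with the order inherited from $[m]$ makes it order-isomorphic to $[k]$ via the unique increasing bijection, and under this identification $\sigma$ becomes a \emph{surjective} order-preserving map onto $[k]$. Conversely, a choice of $k$-subset $S\subseteq[m]$ together with a surjective order-preserving map $\tau:P\to[k]$ recovers an order-preserving map $P\to[m]$ with image exactly $S$, by composing $\tau$ with the increasing bijection $[k]\to S\hookrightarrow[m]$. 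These two operations are mutually inverse, so the order-preserving maps $P\to[m]$ whose image has size $k$ are in bijection with pairs $(S,\tau)$, of which there are $\binom{m}{k}\Theta(P,k)$; summing over $k$ gives the claim. The conventions $\Theta(P,0)=\Omega(P,0)=0$ are consistent here, since a nonempty poset admits no map into $\emptyset$, so the $k=0$ term contributes nothing.

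The second identity then follows from the first by binomial inversion, i.e.\ the inclusion-exclusion principle for the binomial transform: the pair of relations
$$
\Omega(P,m)=\sum_k\binom{m}{k}\Theta(P,k)
\qquad\Longleftrightarrow\qquad
\Theta(P,m)=\sum_k\binom{m}{k}(-1)^{m-k}\Omega(P,k)
$$
are inverse to one another, so the first yields the second. I do not anticipate a serious obstacle; the one point that requires genuine care is the verification in the forward identity that restriction-to-image together with the order-isomorphism $S\cong[k]$ defines an honest bijection, so that the image $S$ may be chosen freely among the $\binom{m}{k}$ subsets of $[m]$ independently of the surjection $\tau$ onto $[k]$.
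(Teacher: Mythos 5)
Your proposal is correct and takes essentially the same route as the paper: the paper likewise proves nothing beyond citing \cite[Theorem 3.15.8]{ec1} for the generating-function identity and remarking that the $\Omega$/$\Theta$ relations follow by the inclusion-exclusion principle. Your bijective count by image for the forward identity, followed by binomial inversion, is exactly the standard filling-in of that one-line argument.
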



From now on, we will assume that, as in Example~\ref{ex-diag-poset},
we have labeled the elements of $P=P(D)$ \emph{naturally}, that is, so
that if $E_i<E_j$ in $P$ then $i<j$.  In a permutation
$\pi=E_{i_1}E_{i_2}\ldots E_{i_p}$ in $\mathcal{L}(P)$, declare $k$ to
be a descent if and only if $i_k>i_{k+1}$.

\begin{theorem} \label{thm33} 
Let $D$ be a web diagram with $D=E_1\oplus \ldots\oplus E_k$ where the entries of the sum are all indecomposable web diagrams.
Let $P=P(D)$ and $p=|P(D)|$. If every member of the sequence $(E_1,\ldots,E_k)$ is distinct then
  \begin{align}
  \MMmatrix{(W)}{(x)}{D,D} &= \sum_{\pi \in \mathcal{L}(P)} x^{1+\des(\pi)} (1+x)^{p-1-\des(\pi)} \label{equationtwo}\\
  \noalign{and}
  \RRmatrix{(W)}{D,D} &= \sum_{\pi \in \mathcal{L}(P)} \dfrac{(-1)^{\des(\pi)}}{p{p-1 \choose \des(\pi)}}. \label{equationthree}
\end{align}
\end{theorem}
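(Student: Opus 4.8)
The plan is to reduce both displayed identities to one enumerative statement about the decomposition poset $P=P(D)$ and then extract \eqref{equationtwo} and \eqref{equationthree} by standard generating-function and integral manipulations. Throughout, write $p=|P|=k$.

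The crux, and what I expect to be the main obstacle, is to prove the bijective identity $f(D,D,\ell)=\Theta(P,\ell)$ for every $\ell\ge 1$, where $\Theta(P,\ell)$ is the number of surjective order-preserving maps $P\to[\ell]$ appearing in Lemma~\ref{lemma-ec1}. I would first show that in any self-reconstructing $\ell$-colouring $c$ each indecomposable block $E_i$ is monochromatic: since $\Recon{D}{c}$ stacks the relabelled colour classes in increasing colour order, assigning the edges of a single $E_i$ to two different colours would split $E_i$ across an $\oplus$-boundary of the reconstruction and thereby exhibit $E_i$ as a non-trivial sum, contradicting its indecomposability. Writing $\bar c(E_i)$ for the common colour of $E_i$, the equality $\Recon{D}{c}=D$ then amounts to matching heights peg-by-peg with $D=E_1\oplus\cdots\oplus E_k$; because on each peg the blocks occur in increasing height exactly as prescribed by the relations defining $P$, this holds precisely when $\bar c$ is order-preserving, while surjectivity of $c$ is surjectivity of $\bar c$. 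The hypothesis that the $E_i$ are pairwise distinct enters exactly here: it is what makes $c\mapsto\bar c$ a bijection onto the surjective order-preserving maps, since distinct indecomposable blocks cannot be permuted without altering $D$.

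Granting this, \eqref{equationtwo} follows by pure bookkeeping. Set $B(x)=\MMmatrix{(W)}{(x)}{D,D}=\sum_{\ell\ge1}x^\ell\Theta(P,\ell)$ and $A(x)=\sum_{m\ge0}\Omega(P,m)x^m$. The relation $\Omega(P,m)=\sum_k\binom{m}{k}\Theta(P,k)$ of Lemma~\ref{lemma-ec1}, combined with $\sum_{m\ge k}\binom{m}{k}x^m=x^k(1-x)^{-(k+1)}$, gives $A(x)=\tfrac{1}{1-x}B\!\big(\tfrac{x}{1-x}\big)$, equivalently $B(x)=\tfrac{1}{1+x}A\!\big(\tfrac{x}{1+x}\big)$. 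Substituting the closed form $A(x)=(1-x)^{-(p+1)}\sum_{\pi\in\mathcal L(P)}x^{1+\des(\pi)}$ from Lemma~\ref{lemma-ec1} and using $1-\tfrac{x}{1+x}=\tfrac{1}{1+x}$ to collapse the powers of $(1+x)$ yields exactly the right-hand side of \eqref{equationtwo}.

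Finally I would deduce \eqref{equationthree} from \eqref{equationtwo} through the integral representation \eqref{calculateR}:
$$\RRmatrix{(W)}{D,D}=\int_{-1}^{0}\frac{\MMmatrix{(W)}{(x)}{D,D}}{x}\,dx=\sum_{\pi\in\mathcal L(P)}\int_{-1}^{0}x^{\des(\pi)}(1+x)^{\,p-1-\des(\pi)}\,dx.$$
The substitution $u=1+x$ writes each summand as $(-1)^{\des(\pi)}\int_0^1 u^{\,p-1-\des(\pi)}(1-u)^{\des(\pi)}\,du$, a Beta integral equal to $\des(\pi)!\,(p-1-\des(\pi))!/p!$; since this quantity is $1/\big(p\binom{p-1}{\des(\pi)}\big)$, summing over $\pi$ gives \eqref{equationthree}. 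The only non-routine ingredient in the whole argument is the bijection of the second paragraph.
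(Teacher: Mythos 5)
Your overall architecture is exactly the paper's: identify $f(D,D,\ell)$ with the number $\Theta(P,\ell)$ of surjective order-preserving maps $P\to[\ell]$, convert via Lemma~\ref{lemma-ec1} to get \eqref{equationtwo}, and then evaluate \eqref{calculateR} as a Beta integral to get \eqref{equationthree}; your generating-function and integral manipulations are correct and agree with the paper's. The paper, however, simply \emph{asserts} the identity $f(D,D,\ell)=\Theta(P,\ell)$ in one sentence, and it is in your attempt to actually prove it (which you rightly call the crux) that there are genuine gaps.

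First, the monochromaticity argument is not valid as stated. If the edges of a block $E_i$ receive two colours, then their \emph{repositioned images} land in different layers of $\Recon{D}{c}$, but those images need not constitute a copy of $E_i$: the relabelling can change the relative heights of $E_i$'s edges, so the split piece of the reconstruction is some other diagram, and nothing contradicts the indecomposability of $E_i$ itself. (Concretely, splitting the crossing pair $\{(1,2,1,2),(1,2,2,1)\}$ into two colours reconstructs to the \emph{parallel} pair; the image is a nontrivial sum, but it is not $E_i$.) You are conflating the colour classes $D_c(j)\subseteq D$ with the layers $\rel(D_c(j))$ sitting inside $\Recon{D}{c}$; these correspond under a repositioning bijection that is generally not the identity, and this distinction is precisely where the whole difficulty lives. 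For $k=1$ the argument is easily patched ($\Recon{D}{c}$ would be decomposable while $D$ is not), but for $k\geq 2$ a split block makes the reconstruction have \emph{more than} $k$ indecomposable summands, and to conclude $\Recon{D}{c}\neq D$ from this you need to know that the number of indecomposable summands of a web diagram is an invariant of the diagram --- i.e.\ uniqueness of the decomposition --- which you never establish.

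Second, the sentence ``distinct indecomposable blocks cannot be permuted without altering $D$'' is false, and if it were true it would contradict the very theorem you are proving. Blocks that are incomparable in $P$ (for instance supported on disjoint peg sets) commute: $E\oplus F=F\oplus E$. If no permutation were allowed, $f(D,D,\ell)$ would count weakly increasing surjections $[k]\to[\ell]$, i.e.\ $\binom{k-1}{\ell-1}$, rather than $\Theta(P,\ell)$ (already wrong when $P$ is an antichain). The correct statement, which carries all the content of the theorem, is: for pairwise distinct blocks, $E_{\sigma(1)}\oplus\cdots\oplus E_{\sigma(k)}=D$ if and only if $\sigma\in\mathcal{L}(P)$. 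Both this and the monochromaticity step rest on one missing lemma, namely that the \emph{unordered} partition of the edges of $D$ into indecomposable summands is unique (the blocks are exactly the classes of edges that no ``cut'' of $D$ --- a partition of the edges with one side entirely below the other on every peg --- can separate), so that any decomposition of $\Recon{D}{c}=D$ into indecomposables has exactly $k$ summands, equal as diagrams to the $E_i$ rearranged by a linear extension; distinctness then pins down the indexing. With that lemma in hand your outline goes through, but without it both halves of your key bijection are unsupported.
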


\begin{proof} 
  The number $f(D,D,\ell)$ defined in Section~2 is the number of
  surjective order-preserving maps from $P(D)$ to $\{1,\ldots,\ell\}$.
  By Lemma \ref{lemma-ec1} we get
$$
\sum_{m\geq 0} \Theta(P,m) x^m = (1+x)^{p} \sum_{\pi \in
  \mathcal{L}(P)} \left( \dfrac{x}{1+x} \right)^{1+\des(\pi)} =
\MMmatrix{(W)}{(x)}{D,D}.
$$ 

Let $B(x,y)=\int_0^1 t^{x-1} (1-t)^{y-1} dt$ be the beta function.
For the diagonal terms of the web mixing matrix we have
\begin{align*}
  \RRmatrix{(W)}{D,D} &= \sum_{\pi \in \mathcal{L}(P)} \int_{-1}^{0} dx \left( 
    x^{\des(\pi)} (1+x)^{p-1-\des(\pi)} 
  \right) \\
  &= \sum_{\pi \in \mathcal{L}(P)} (-1)^{\des(\pi)} B(\des(\pi)+1,p-\des(\pi)) \\
  &= \sum_{\pi \in \mathcal{L}(P)} \dfrac{(-1)^{\des(\pi)}}{p{p-1 \choose \des(\pi)}}.\qedhere
\end{align*}
\end{proof}

By Theorem~\ref{thm33}, computing the diagonal entries of our matrices
is thus equivalent to computing the descent statistic on the
Jordan-H\"older set of the corresponding poset.

\begin{example}
  Let $D$ be the following web diagram:
  \ \\[1em]
  \centerline{\includegraphics{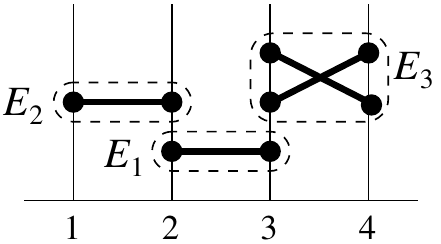}}
  \ \\
  Since each of the web diagrams $(E_1,E_2,E_3)$ are distinct, Theorem~\ref{thm33} may be applied.
  The poset $P=P(D)$ is the poset on $\{E_1,E_2,E_3\}$ with
  relations $E_1<E_2,E_3$.  We find that
  $\mathcal{L}(P)=\{E_1E_2E_3,E_1E_3E_2\}$, with $\des(E_1E_2E_3)=0$
  and $\des(E_1E_3E_2)=1$. Consequently we have 
$$\MMmatrix{(W(n))}{(x)}{D,D}
  = x(1+x)^2
  + x^2 (1+x) = x+3x^2+2x^3$$ and 
$\RRmatrix{(W)}{D,D} = {(-1)^0}/{3} +
  {(-1)^1}/{3{2\choose 1}} = 1/6$.
\end{example}

Given a web world $W$, let $\AllPosets(W) = \{P(D): D \in W\}$.
For $P \in \AllPosets(W)$, let $\multip{W}{P} = |\{D \in W ~:~ P(D)=P\}|$.
Using this notation we have
\begin{corollary} \label{woc}
Suppose that $W$ is a web world whose graph $G(W)=(V,E,\ell)$ is such that $\ell(e)=1$ for all $e \in E$. Then
\begin{align}
  \trace (\MMmatrix{(W)}{(x)}{}) \label{eastbound}
  &= \sum_{D \in W} \MMmatrix{(W)}{(x)}{D,D} \nonumber \\
  &= \sum_{P \in \AllPosets(W)} \multip{W}{P} \sum_{\pi \in \mathcal{L}(P)} x^{1+\des(\pi)} (1+x)^{|P|-1-\des(\pi)}\\
  \noalign{and}
  \trace (\RRmatrix{(W)}{}) \label{westbound}
  &= \sum_{D \in W} \RRmatrix{(W)}{D,D} \nonumber\\ 
  &= \sum_{P \in \AllPosets(W)} \multip{W}{P} \sum_{\pi \in \mathcal{L}(P)} \dfrac{(-1)^{\des(\pi)}}{p{p-1 \choose \des(\pi)}}
\end{align}
\end{corollary}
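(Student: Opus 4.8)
The plan is to reduce the corollary to Theorem~\ref{thm33} by two observations: that the trace is literally the sum of the diagonal entries, and that under the hypothesis $\ell(e)=1$ for all $e\in E$ every diagram $D\in W$ satisfies the distinctness hypothesis of Theorem~\ref{thm33}, so that the closed forms \eqref{equationtwo} and \eqref{equationthree} are available for each diagonal entry. Once both points are in place, the identities follow by regrouping a finite sum according to the poset invariant $P(D)$.

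The main step is to verify the distinctness hypothesis. Write $D=E_1\oplus\cdots\oplus E_k$ as a sum of indecomposable subweb diagrams; each $E_i$ is nonempty and hence contains at least one edge and at least one pegpair, and the edges of $D$ are partitioned among the summands so that each edge of $D$ lies in exactly one $E_i$. The key point is that the hypothesis $\ell(e)=1$ says each unordered pair of pegs is joined by at most one edge in $D$, a property shared by every member of $W$, since the web-world action of Definition~\ref{twoseven} only permutes heights on pegs and therefore leaves $\PegpairsSet$ (and hence $\ell$) unchanged. Now suppose, for contradiction, that $E_i=E_j$ for some $i\ne j$. Then $E_i$ and $E_j$ have the same pegpairs; choosing any pegpair $(x,y)\in\PegpairsSet(E_i)=\PegpairsSet(E_j)$, the diagram $D$ contains one edge between pegs $x$ and $y$ arising from the $E_i$-summand and a distinct one arising from the $E_j$-summand, so $\ell(\{x,y\})\ge 2$, contradicting the hypothesis. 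Hence the $E_i$ are pairwise distinct and Theorem~\ref{thm33} applies to $D$.

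With this in hand the remainder is bookkeeping. By definition of the trace, $\trace(\MMmatrix{(W)}{(x)}{})=\sum_{D\in W}\MMmatrix{(W)}{(x)}{D,D}$, and likewise for $\RRmatrix{(W)}{}$. Theorem~\ref{thm33} expresses each diagonal entry purely through the decomposition poset: $\MMmatrix{(W)}{(x)}{D,D}$ depends on $D$ only via $P(D)$ through \eqref{equationtwo}, and $\RRmatrix{(W)}{D,D}$ only via $P(D)$ through \eqref{equationthree}. Therefore all diagrams sharing the same poset contribute the same value, and I would collect terms according to $P\in\AllPosets(W)$: for each such $P$ there are exactly $\multip{W}{P}$ diagrams $D\in W$ with $P(D)=P$, each contributing the $P$-value of the right-hand side of \eqref{equationtwo} (respectively \eqref{equationthree}). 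Summing over $P$ yields the two displayed identities.

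The only genuine obstacle is the distinctness argument; everything else is a rearrangement of finitely many equal summands. One should be mildly careful that the equality $E_i=E_j$ is read as equality of the relativized web diagrams produced by $\rel$ on the same peg set, but since distinct indecomposable summands of $D$ occupy disjoint edge sets, any coincidence of two summands forces a repeated pegpair and hence a multi-edge, which the hypothesis $\ell\equiv 1$ forbids.
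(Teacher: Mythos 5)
Your proposal is correct and takes essentially the same route the paper intends: the corollary is stated as an immediate consequence of Theorem~\ref{thm33}, with the hypothesis $\ell(e)=1$ serving precisely to guarantee that the indecomposable summands of every $D\in W$ are pairwise distinct, after which the trace identities follow by grouping the diagonal entries according to the poset $P(D)$, each poset $P$ occurring $\multip{W}{P}$ times. Your explicit verification of the distinctness hypothesis (a repeated summand forces a repeated pegpair and hence $\ell(\{x,y\})\geq 2$) is exactly the step the paper leaves implicit, and it is sound.
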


Note that, owing to the idempotence of web mixing matrices (Theorem~\ref{cethm}), 
the trace of a web mixing matrix is equal to its rank. 
A corollary is that the trace must be a positive integer number.  In the physics context of reference~\cite {WIM} this invariant represents the number of independent contributions to the exponent of an Eikonal scattering amplitude from the corresponding web world.

We illustrate the above calculations in the following two examples. In
the first example posets on a different number of elements
emerge. However, in the second example only posets on three elements
emerge. The second example is a special case of the web world that
will be discussed later in Case 2 in Section \ref{sec:7}.

\begin{example} \label{falkirk} 
The web world $W(D)$ of the web diagram $$D=\{(1,2,1,1),(2,3,2,1),(3,4,2,1)\}$$ contains four web diagrams. 
$\AllPosets(W)$ contains three posets;
the chain $\mathbf{3}$ arises twice; the wedge poset $\wedge$ arises once, as does the $V$-shaped poset $\vee$.  Thus
$\multip{W}{\mathbf{3}}=2$ and $\multip{W}{\wedge}=\multip{W}{\vee}=1$.
We have 
$\mathcal{L}(\mathbf{3})=\{(1,2,3)\}$ and
$\mathcal{L}(\vee) = \mathcal{L}(\wedge) =\{(1,2,3),(1,3,2)\}$.  
Thus,
$$
\trace(\MMmatrix{(W)}{(x)}{})= 2 (x^1 (1+x)^2) + 1(x^1(1+x)^2 +
x^2(1+x)) + 1(x^1(1+x)^2 + x^2(1+x)) = 6x^3+10x^2+4x
$$ 
and
$$
\trace(\RRmatrix{(W)}{}) = 2(1/3) + 1(1/6) + 1(1/6) =1.
$$ 
(See Figure~\ref{burp}.)
\end{example}

\begin{figure} 
\newcolumntype{S}{>{\centering\arraybackslash} m{.3\linewidth} }
\begin{tabular}{|S|S|S|} \hline Diagram $D$ & Poset $P(D)$ &
  $\mathcal{L}(P(D))$ \\ \hline\hline
  \includegraphics{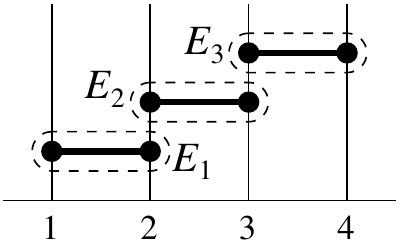} & \includegraphics{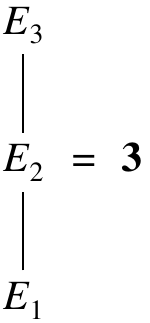} &  $\{(1,2,3)\}$ \\ \hline
  \includegraphics{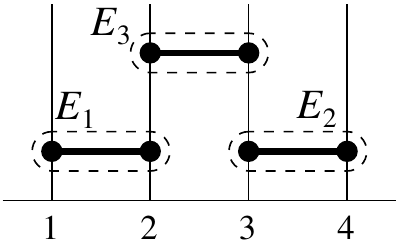} & \includegraphics{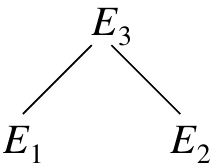} &  $\{(1,2,3), (1,3,2)\}$ \\ \hline
  \includegraphics{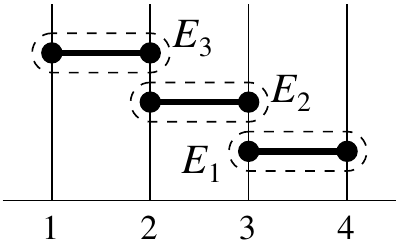} & \includegraphics{3chain.pdf} &  $\{(1,2,3)\}$ \\ \hline
  \includegraphics{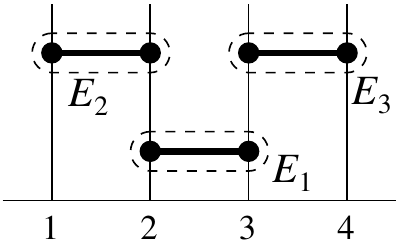} & \includegraphics{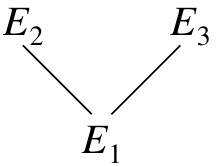} &  $\{(1,2,3),(1,3,2)\}$ \\ \hline
\end{tabular}
\caption{\label{burp} The four web diagrams in the web world generated
  from $D$ in Example~\ref{falkirk}.}
\end{figure}

\section{The number of web worlds}
In this section we present some results on the number of web worlds
with prescribed sizes of the sets $\PegSet$, $\EdgeSet$, and
$\PegpairsSet$. 
If $W$ is a web world and $D_1,D_2$ are two web diagrams in $W$, then
$\PegSet(D_1)=\PegSet(D_2)$, $\EdgeSet(D_1)=\EdgeSet(D_2)$, and 
$\PegpairsSet(D_1)=\PegpairsSet(D_2)$. We will use $\PegSet(W)$, 
$\EdgeSet(W)$, and $\PegpairsSet(W)$ to refer to these numbers without
having to refer to diagrams of the web world.

A web world $W$ is uniquely specified by its web graph $G(W)=(V,E,\ell)$.
An equivalent specification is by a square matrix
$\Represent(W)$ of integers whereby
$$\Represent(W)_{i,j} = 
\left\{ 
\begin{array}{ll}
\ell(i,j) & \mbox{ if } i<j \mbox{ and } \{i,j\} \in E, \\
0 & \mbox{ otherwise}.
\end{array}
\right.$$
Theorem~\ref{thm-nww-gf} gives the generating
function for the number of web worlds according to these three
statistics.  Web-worlds which contain pegs that have no incident
edges, equivalent to isolated vertices when one talks of graphs, have
little relevance in the corresponding physics model.  
Theorem~\ref{gin}
gives the generating function for the number of web worlds with no
isolated pegs, according to the three statistics.  
Theorem~\ref{sam} gives the generating function for the number of proper
web worlds according to the same three statistics as Theorem~\ref{gin}.
Theorem~\ref{allo} gives
an expression for the number of different diagrams in a web world in
terms of the values in the matrix $\Represent(W)$.

Let $W$ be a web world and $D$ a web diagram in $W$.  Define the
{\it{web diagram matrix}} of a web diagram $D$ to be a matrix
$\wdm(D)$ whose entries are sets:
\ \\[1em]
\centerline{$(a_j,b_j) \in \wdm(D)_{x_j , y_j}$ for all
  $(x_j,y_j,a_j,b_j) \in D$.}
\begin{example} \label{wmd_example} Consider the web diagram $D$ in
  Figure~\ref{first:example}. The set of 4-tuples for that web diagram
  is $\{ (1,2,1,1), (1,7,2,2), (2,4,2,3), (3,4,1,1), (3,6,2,4),
  (4,6,2,3), (4,6,4,2), (5,6,1,1), (5,7,2,1) \}.$ The web diagram
  matrix $\wdm(D)$ is:
$$\wdm(D) = \left(
\begin{matrix}
  \emptyset & \{(1,1)\} & \emptyset & \emptyset &\emptyset &\emptyset & \{(2,2)\} \\
  \emptyset & \emptyset & \emptyset & \{(2,3)\} & \emptyset & \emptyset & \emptyset \\
  \emptyset & \emptyset & \emptyset & \{(1,1)\} & \emptyset & \{(2,4)\} & \emptyset \\
  \emptyset & \emptyset & \emptyset & \emptyset & \emptyset & \{(2,3),(4,2)\} & \emptyset \\
  \emptyset & \emptyset & \emptyset & \emptyset & \emptyset & \{(1,1)\} & \{(2,1)\} \\
  \emptyset & \emptyset & \emptyset & \emptyset & \emptyset & \emptyset & \emptyset \\
  \emptyset & \emptyset & \emptyset & \emptyset & \emptyset &
  \emptyset & \emptyset
\end{matrix}
\right).$$
\end{example}

Every web world $W$ can be
uniquely specified by a matrix $\Represent(W)$ since all permutations of line
end-points on pegs form diagrams in the same web world, and it is only
the {\it{number}} of such lines that defines the web world. A matrix
$A=\Represent(W)$ is the matrix of a web world iff $a_{ij}=0$ for all
$1\leq j \leq i\leq m$ and the other entries are non-negative
integers.  These matrices are also obtained by simply taking the
cardinality of the entries of the matrices $\wdm$.

\begin{example}
  Let $W$ be the web world given in Example~\ref{wmd_example}.  Then
$$\Represent(W) = \left(
\begin{matrix}
  0 & 1 & 0 & 0 & 0 & 0 & 1 \\
  0 & 0 & 0 & 1 & 0 & 0 & 0 \\
  0 & 0 & 0 & 1 & 0 & 1 & 0 \\
  0 & 0 & 0 & 0 & 0 & 2 & 0 \\
  0 & 0 & 0 & 0 & 0 & 1 & 1 \\
  0 & 0 & 0 & 0 & 0 & 0 & 0 \\
  0 & 0 & 0 & 0 & 0 & 0 & 0
\end{matrix}
\right).$$
\end{example}

Let 
$$\nww(m,t,n)=\left| \left\{ \mbox{web worlds }W ~:~ 
\begin{array}{lcc}
\PegSet(W) &\subseteq& \{1,\ldots,m\},\\ |\EdgeSet(W)|&=&t,\\ |\PegpairsSet(W)|&=&n 
\end{array}
\right\}\right|,$$
the {\ul{n}}umber of {\ul{w}}eb {\ul{w}}orlds with the three prescribed properties.
The generating function $\NWW(u,z,y)$ for these numbers is
\begin{theorem}\label{thm-nww-gf}
  $\NWW(u,z,y) ~=~ \displaystyle\sum_{m\geq 2\atop t,n\geq 0}
  \nww(m,t,n) u^m z^t y^n ~=~ \sum_{m \geq 2} \left( 1+
    \dfrac{yz}{1-z}\right)^{m\choose 2} u^m.$
\end{theorem}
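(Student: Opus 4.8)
The plan is to set up a bijective/counting argument that organizes web worlds by which pairs of pegs are joined by at least one edge, and with what multiplicity. Recall that by the discussion preceding the statement, a web world $W$ with $\PegSet(W) \subseteq \{1,\ldots,m\}$ is uniquely specified by the matrix $\Represent(W)$, which is an upper-triangular $m \times m$ matrix of non-negative integers with zeros on and below the diagonal. Thus counting web worlds on peg set $\subseteq \{1,\ldots,m\}$ is exactly counting such matrices. The three statistics translate cleanly into matrix language: $|\PegpairsSet(W)| = n$ is the number of strictly-upper-triangular positions $(i,j)$ with $a_{ij} > 0$, and $|\EdgeSet(W)| = t$ is the sum $\sum_{i<j} a_{ij}$ of all entries. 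There are exactly $\binom{m}{2}$ strictly-upper-triangular positions available.

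First I would fix $m$ and compute the contribution to $\NWW$ from all web worlds with $\PegSet(W) \subseteq \{1,\ldots,m\}$. Since the entries $a_{ij}$ for $i<j$ are chosen independently across the $\binom{m}{2}$ positions, the bivariate generating function in $z$ (tracking $|\EdgeSet|$) and $y$ (tracking $|\PegpairsSet|$) factors as a product over positions. Each position independently either contributes nothing (entry $0$, contributing a factor $1$) or contributes a positive integer value $k \geq 1$ to the edge count while incrementing the pegpairs count by one (contributing a factor $y z^k$). Summing over the choices at a single position gives the local factor
\begin{equation*}
1 + \sum_{k \geq 1} y z^k ~=~ 1 + \frac{yz}{1-z}.
\end{equation*}
Taking the product over all $\binom{m}{2}$ independent positions yields $\bigl(1 + \tfrac{yz}{1-z}\bigr)^{\binom{m}{2}}$ as the contribution at fixed $m$.

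Next I would assemble the full generating function by summing over $m$. Attaching the marker $u^m$ for $\PegSet(W) \subseteq \{1,\ldots,m\}$ and summing over $m \geq 2$ (the smallest peg set on which any edge can exist has two pegs) gives
\begin{equation*}
\NWW(u,z,y) ~=~ \sum_{m \geq 2} \left(1 + \frac{yz}{1-z}\right)^{\binom{m}{2}} u^m,
\end{equation*}
which is precisely the claimed formula. The one point requiring a little care — and the only genuine obstacle — is confirming that the statistics are recorded consistently with the definition of $\nww(m,t,n)$, namely that we are counting web worlds with $\PegSet(W) \subseteq \{1,\ldots,m\}$ rather than equal to $\{1,\ldots,m\}$, so that isolated pegs (the all-zero rows/columns of $\Represent(W)$) are permitted and no inclusion–exclusion over the support of the peg set is needed. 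Since the definition of $\nww(m,t,n)$ uses the subset relation, the independent-positions product above counts exactly the right objects, and the factorization goes through without correction terms.
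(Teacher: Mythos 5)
Your proof is correct and follows essentially the same route as the paper, which leaves this theorem's proof implicit in the preceding discussion: web worlds with $\PegSet(W)\subseteq\{1,\ldots,m\}$ correspond bijectively to strictly upper-triangular non-negative integer matrices $\Represent(W)$, and the generating function factors over the $\binom{m}{2}$ independent positions, each contributing $1+\frac{yz}{1-z}$. Your closing remark about the subset (rather than equality) condition on $\PegSet(W)$ is exactly the right point of care, and it is what distinguishes this count from the no-isolated-pegs count of Theorem~\ref{gin}, where inclusion--exclusion does become necessary.
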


The above generating function allows pegs which have no incident
edges. We now consider web diagrams which do not contain such pegs.
Let 
$$\nwwnip(m,t,n)=\left| \left\{ \mbox{web worlds }W ~:~ 
\begin{array}{lcl}
\PegSet(W) &=& \{1,\ldots,m\},\\ |\EdgeSet(W)|&=&t,\\ |\PegpairsSet(W)|&=&n 
\end{array}
\right\}\right|, $$
the {\ul{n}}umber of {\ul{w}}eb {\ul{w}}orlds having {\ul{n}}o {\ul{i}}solated 
{\ul{p}}egs and the three prescribed properties.
This collection of web worlds
corresponds to matrices $A = \Represent(W)$ with the added property
that the `hook' at position $(i,i)$ in $A$ is nonempty, i.e. for all
$1\leq i \leq m$,
$$a_{1,i}+\cdots + a_{i-1,i}+a_{i,i}+a_{i,i+1}+\cdots+a_{im} >0.$$
Let
$$\NPWW(u,z,y) = \sum_{m \geq 2 \atop t,n\geq 1} \nwwnip(m,t,n) u^m z^t y^n.$$
We have

\begin{theorem} \label{gin}
  $\NPWW(u,z,y) = - \left(\dfrac{u}{1+u}\right)^2 + \dfrac{1}{1+u}
  \displaystyle\sum_{m \geq 2} \left(
    1+\frac{yz}{1-z}\right)^{m\choose 2} \left(
    \frac{u}{1+u}\right)^m$.
\end{theorem}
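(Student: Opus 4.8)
The plan is to derive Theorem~\ref{gin} from Theorem~\ref{thm-nww-gf} by a binomial-transform argument (equivalently, inclusion--exclusion on the isolated pegs) that separates the non-isolated pegs of a web world from the rest, followed by the substitution $u\mapsto u/(1+u)$ to invert the resulting relation.

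First I would record the basic combinatorial identity linking the two counting functions. A web world $W$ counted by $\nww(m,t,n)$ is a strictly upper-triangular non-negative integer $m\times m$ matrix $\Represent(W)$; let $S=\PegSet(W)$ be its set of non-isolated pegs. Restricting $\Represent(W)$ to the rows and columns indexed by $S$, with the induced linear order, yields the matrix of a web world with no isolated pegs on $|S|$ pegs, and this restriction preserves both $|\EdgeSet|=t$ and $|\PegpairsSet|=n$. Conversely, any choice of a $k$-subset $S\subseteq\{1,\dots,m\}$ together with a no-isolated-pegs web world placed on $S$ produces a web world on $m$ pegs whose non-isolated pegs are exactly $S$. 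This bijection gives, for all $m\ge 0$,
$$\nww(m,t,n)=\sum_{k=0}^{m}\binom{m}{k}\,\nwwnip(k,t,n),$$
the factor $\binom{m}{k}$ recording the choice of $S$.

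Next I would pass to generating functions in $u$, treating $z$ and $y$ (hence the shorthand $w:=yz/(1-z)$) as parameters. Writing $G(u)=\sum_{m\ge0}\nww(m,t,n)u^mz^ty^n$ and $H(u)=\sum_{k\ge0}\nwwnip(k,t,n)u^kz^ty^n$, the identity above together with the expansion $\sum_{m\ge k}\binom{m}{k}u^m=u^k/(1-u)^{k+1}$ yields
$$G(u)=\frac{1}{1-u}\,H\!\left(\frac{u}{1-u}\right).$$
Substituting $u\mapsto u/(1+u)$, under which $u/(1-u)\mapsto u$ and $1/(1-u)\mapsto 1+u$, inverts this to
$$H(u)=\frac{1}{1+u}\,G\!\left(\frac{u}{1+u}\right),$$
and Theorem~\ref{thm-nww-gf} supplies the closed form $G(u)=\sum_{m\ge0}(1+w)^{\binom{m}{2}}u^m$ (the full sum over $m\ge0$, which I verify below), so that $H(u)=\frac{1}{1+u}\sum_{m\ge0}(1+w)^{\binom{m}{2}}\bigl(u/(1+u)\bigr)^{m}$.

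Finally I would reconcile the boundary terms and simplify. Since the web world on $0$ pegs is the empty one and a single peg can carry no edge, $\nwwnip(0,0,0)=1$ and $\nwwnip(1,t,n)=0$, so $H(u)=1+\NPWW(u,z,y)$; likewise $G(u)=1+u+\NWW(u,z,y)$, the extra $1+u$ being the $m=0,1$ terms omitted in Theorem~\ref{thm-nww-gf}. Splitting off the $m=0$ and $m=1$ terms of the sum for $H(u)$ (each has $\binom{m}{2}=0$, contributing $1$ and $u/(1+u)$) and subtracting the constant $1$ gives
$$\NPWW(u,z,y)=-1+\frac{1}{1+u}\Bigl(1+\frac{u}{1+u}\Bigr)+\frac{1}{1+u}\sum_{m\ge2}\Bigl(1+\frac{yz}{1-z}\Bigr)^{\binom{m}{2}}\Bigl(\frac{u}{1+u}\Bigr)^{m},$$
and a short rational-function simplification collapses $-1+\frac{1}{1+u}+\frac{u}{(1+u)^2}$ to $-\bigl(u/(1+u)\bigr)^2$, matching the claim. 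I expect the main obstacle to be bookkeeping rather than depth: one must pin down the low-order terms ($m=0,1$) correctly and justify that restriction to the non-isolated pegs is a genuine statistic-preserving bijection, after which the substitution $u\mapsto u/(1+u)$ does the real work.
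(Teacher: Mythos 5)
Your proof is correct and takes essentially the same route as the paper: the same bijection between arbitrary web worlds and pairs (set of isolated pegs, no-isolated-pegs web world) yields the binomial identity, and the substitution $u\mapsto u/(1+u)$ is exactly the paper's change of variable $u=x/(1-x)$ read in reverse. The only difference is bookkeeping — the paper restricts to $t,n\geq 1$ and subtracts $1$ inside the sum, while you keep the full generating functions and split off the $m=0,1$ terms at the end; both produce the $-\left(u/(1+u)\right)^2$ correction.
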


\begin{proof}
  Every matrix that is counted by $\nwwnip(m,t,n)$ gives rise to ${m+i
    \choose i}$ matrices of dimension $m+i$ where we have inserted $i$
  zeros on the diagonal in appropriate places.  Fill in the hooks of
  these new zeros on the diagonal with zeros.  The resulting matrices
  are simply the collection of matrices which have integer entries,
  and whose diagonal and lower diagonal entries are all zero.
  Therefore
  \begin{align*}
  \sum_{m \geq 2 \atop t,n \geq 1} \sum_{i \geq 0} {m+i \choose i} \nwwnip(m,t,n) x^{m+i} z^t y^n 
  &= \sum_{m \geq 2\atop n,t \geq 1} \nww(m,t,n) x^m z^t y^n.
\end{align*}
By Theorem~\ref{thm-nww-gf} the right hand side can be written as
$$\sum_{m \geq 2} x^m \left(  \left(1+\dfrac{yz}{1-z}\right)^{m \choose 2} -1 \right).$$
The left hand side of the above equation is $\dfrac{1}{1-x}
\NPWW(x/(1-x),z,y)$.  Change variable by setting $u=x/(1-x)$ and the
result follows.
\end{proof}

Using standard techniques to extract the coefficient of $u^a z^b y^c$
in Theorem~\ref{gin} we have the following corollary.
\begin{corollary}
  $\nwwnip(a,b,c) =\displaystyle {b-1\choose c-1} \sum_{k} {a \choose k}
  {{k\choose 2} \choose c} (-1)^{a-k}$.
\end{corollary}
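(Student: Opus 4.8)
The plan is to extract the coefficient of $u^a z^b y^c$ directly from the closed form in Theorem~\ref{gin}, working one variable at a time. Since the corollary concerns $\nwwnip(a,b,c)$ with $b,c \geq 1$, the first summand $-(u/(1+u))^2$ of $\NPWW$ is irrelevant: it is independent of $y$ and $z$, so it contributes only to the coefficient of $y^0 z^0$. I may therefore restrict attention to the sum $\frac{1}{1+u}\sum_{m\geq 2}\left(1+\frac{yz}{1-z}\right)^{m\choose 2}\left(\frac{u}{1+u}\right)^m$ and treat everything as a formal power series identity.

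First I would extract the dependence on $y$ and $z$. Writing $N={m\choose 2}$ and applying the binomial theorem gives $\left(1+\frac{yz}{1-z}\right)^{N}=\sum_{c\geq 0}{N\choose c}y^c\frac{z^c}{(1-z)^c}$, so the coefficient of $y^c$ in the $m$-th summand is ${{m\choose 2}\choose c}\frac{z^c}{(1-z)^c}$. Extracting the coefficient of $z^b$ then reduces to $[z^{b-c}](1-z)^{-c}={b-1\choose c-1}$, by the standard expansion of $(1-z)^{-c}$. Hence the coefficient of $y^c z^b$ in the $m$-th summand equals ${{m\choose 2}\choose c}{b-1\choose c-1}$, and the factor ${b-1\choose c-1}$ can be pulled out of the sum over $m$.

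It remains to extract the coefficient of $u^a$ from $\sum_{m\geq 2}{{m\choose 2}\choose c}\frac{1}{1+u}\left(\frac{u}{1+u}\right)^m$. Since $\frac{1}{1+u}\left(\frac{u}{1+u}\right)^m=\frac{u^m}{(1+u)^{m+1}}$, I need $[u^a]\frac{u^m}{(1+u)^{m+1}}=[u^{a-m}](1+u)^{-(m+1)}$. The negative binomial expansion $(1+u)^{-(m+1)}=\sum_{j\geq 0}{m+j\choose m}(-1)^j u^j$ gives this coefficient as ${a\choose m}(-1)^{a-m}$. Combining the three extractions yields $\nwwnip(a,b,c)={b-1\choose c-1}\sum_{m\geq 2}{a\choose m}{{m\choose 2}\choose c}(-1)^{a-m}$.

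Finally I would reconcile the summation range with the stated formula, whose sum runs over all $k$. For $k=0$ and $k=1$ one has ${k\choose 2}=0$, so ${{k\choose 2}\choose c}={0\choose c}=0$ whenever $c\geq 1$; these terms vanish, and the sum over $m\geq 2$ coincides with the sum over all $k$. This identifies the expression with the claimed one. The only step requiring any care is the final $u$-extraction, where one must correctly read off the negative-exponent binomial coefficient $[u^{a-m}](1+u)^{-(m+1)}$; the remaining manipulations are routine applications of the binomial and negative-binomial series.
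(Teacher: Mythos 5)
Your proposal is correct, and it is precisely the "standard coefficient extraction" that the paper invokes (the paper gives no further detail): expand in $y$ via the binomial theorem, read off $[z^{b-c}](1-z)^{-c}={b-1\choose c-1}$, read off $[u^{a-m}](1+u)^{-(m+1)}=(-1)^{a-m}{a\choose m}$, and note that the $k=0,1$ terms vanish for $c\geq 1$ so the sum may run over all $k$. All steps check out, including the correct dismissal of the $-(u/(1+u))^2$ term.
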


Let  
$$\npww(m,t,n)=\left| \left\{ \mbox{web worlds }W ~:~ 
\begin{array}{lcl}
\PegSet(W) &=& \{1,\ldots,n\},\\ |\EdgeSet(W)|&=&m,\\ |\PegpairsSet(W)|&=& t
\end{array}
\right\}\right|,$$
the {\ul{n}}umber of {\ul{p}}roper {\ul{w}}eb {\ul{w}}orlds with the three prescribed properties.
Let 
$$C(x,q,z)  ~=~ \sum_{m,t,n} \npww(m,t,n) x^m q^t \dfrac{z^n}{n!}.$$

\begin{theorem} \label{sam}
$C(x,q,z)~=~ \log\left(  
1+\ds\sum_{n\geq 1} \left(1+\dfrac{qx}{1-x}\right)^{n\choose 2} \dfrac{z^n}{n!}
\right).$
\end{theorem}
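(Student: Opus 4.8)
The plan is to recognise $C(x,q,z)$ as the \emph{connected part} of the species of all web worlds on a labelled peg set, and to invoke the exponential formula. The key structural observation is that a web world $W$ on peg set $\{1,\ldots,n\}$ decomposes canonically according to the connected components of its web graph $G(W)$: the set $\{1,\ldots,n\}$ is partitioned into the vertex sets of the components, and restricting $W$ to each component yields a \emph{proper} web world. Conversely, a set partition of $\{1,\ldots,n\}$ together with a choice of proper web world on each block assembles into a unique web world on $\{1,\ldots,n\}$. Moreover this decomposition is additive in the two tracked statistics: every edge and every peg-pair joins two pegs lying in the same component, so $|\EdgeSet|$ and $|\PegpairsSet|$ are sums of the corresponding quantities over the components.

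First I would record the exponential generating function for all web worlds, with isolated pegs permitted. By Theorem~\ref{thm-nww-gf}, the generating polynomial for web worlds on the peg set $\{1,\ldots,n\}$, weighted by $x^{|\EdgeSet|}q^{|\PegpairsSet|}$, equals $\left(1+\frac{qx}{1-x}\right)^{n\choose 2}$: each of the ${n\choose 2}$ pairs $i<j$ contributes independently, either no edge (factor $1$) or $k\geq 1$ parallel edges contributing one peg-pair and $k$ edges (factor $\sum_{k\geq 1}qx^k=\frac{qx}{1-x}$). Hence, setting $A(x,q,z)=\sum_{n\geq 1}\left(1+\frac{qx}{1-x}\right)^{n\choose 2}\frac{z^n}{n!}$, the quantity $1+A$ is precisely the exponential generating function, in the peg-counting variable $z$, for all web worlds, the empty peg set supplying the constant term $1$.

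Next, the exponential formula (see, e.g., \cite{ec1}) applied to the component decomposition above gives $1+A(x,q,z)=\exp\bigl(C(x,q,z)\bigr)$, since $C(x,q,z)=\sum_{m,t,n}\npww(m,t,n)x^m q^t\frac{z^n}{n!}$ is by definition the exponential generating function for the connected objects. One must check the boundary cases: a single peg with no edges is a proper web world (a one-vertex graph is connected), contributing the $n=1$ term $z$ to $C$ and matching the $n=1$ term $\left(1+\frac{qx}{1-x}\right)^0\frac{z}{1!}=z$ of $A$; and $1+A$ has constant term $1$, as required for the logarithm. Taking logarithms then yields
$$
C(x,q,z)=\log\bigl(1+A(x,q,z)\bigr)=\log\left(1+\sum_{n\geq 1}\left(1+\frac{qx}{1-x}\right)^{n\choose 2}\frac{z^n}{n!}\right),
$$
which is the claimed identity.

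The main obstacle is the structural step rather than the generating-function manipulation: one must verify that the connected-component decomposition lifts to a genuine bijection between web worlds on $\{1,\ldots,n\}$ and pairs consisting of a set partition of $\{1,\ldots,n\}$ together with a proper web world on each block, and that this bijection respects the $x$- and $q$-gradings. Since a web world is determined by its matrix $\Represent(W)$, this reduces to checking that the block decomposition of $\Represent(W)$ induced by the components (after simultaneous relabelling of rows and columns) carries all the edge and peg-pair data with nothing straddling two blocks; this is immediate, because any edge or peg-pair joins two pegs in the same component by the definition of a component. Once this is established, the exponential formula applies verbatim.
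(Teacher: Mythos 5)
Your proof is correct and takes essentially the same route as the paper: both rest on the exponential formula applied to the connected-component decomposition of the web graph, together with the observation that each of the $\binom{n}{2}$ peg pairs independently contributes a factor $1+\frac{qx}{1-x}$. The only cosmetic difference is that the paper first invokes the classical identity $\exp C(q,z)=1+\sum_{n\geq 1}(1+q)^{\binom{n}{2}}\frac{z^n}{n!}$ for connected simple graphs and then substitutes $q\mapsto q(x+x^2+\cdots)=\frac{qx}{1-x}$ to install the edge multiplicities, whereas you build the multiplicities in from the start and apply the exponential formula directly to web worlds viewed as edge-weighted graphs.
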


\begin{proof}
Using the exponential formula, the bivariate generating function 
$C(q,z)=\sum_{n,k} c(k,n) q^k \dfrac{x^{n}}{n!}$ where $c(k,n)$ is the number of connected simple graphs on $n$ labeled vertices having $k$ edges is seen to satisfy
$$\exp C(q,z) = 1+\sum_{n\geq 1} (1+q)^{n\choose 2} \dfrac{z^n}{n!}.$$
One may now replace $q$ by $q(x+x^2+x^3+\ldots)$ so that the power of $x$ records the sum of weights on labeled edges between vertices.
Therefore
$$\exp C(x,q,z) = 1+\sum_{n\geq 1} (1+q(x+x^2+\ldots))^{n\choose 2} \dfrac{z^n}{n!}.$$
Taking logarithms of both sizes we have:
$C(x,q,z)~=~ \log\left(  
1+\ds\sum_{n\geq 1} \left(1+\dfrac{qx}{1-x}\right)^{n\choose 2} \dfrac{z^n}{n!}
\right).$
\end{proof}

Next we present a different type of enumerative result. Rather than giving another theorem for the number of web worlds according to
various statistics, we now present a formula for the number of web diagrams contained in a given web world.

\begin{theorem} \label{allo}
  Let $W$ be a web world on $n$ pegs and $A=\Represent(W)$. The number
  of different diagrams $D \in W$ is
$$|W| ~=~ \prod_{i=1}^{n} {(a_{i*}+a_{*i})!} ~\Big/ \prod_{1\leq i<j\leq n} a_{ij}!$$
where $a_{i*}$ (resp. $a_{*i}$) is the sum of entries in column
(resp. row) $i$ of $A$.
\end{theorem}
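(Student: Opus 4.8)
The plan is to prove this by a direct double-counting argument, using the fact (noted just before the theorem) that $W$ consists of exactly those diagrams obtained by assigning heights to the endpoints on each peg while keeping the matrix $A=\Represent(W)$ fixed. First I would record the basic bookkeeping. For a peg $i$, the left endpoints of edges leaving $i$ number $\sum_{j>i} a_{ij}=a_{*i}$ (the row sum), and the right endpoints of edges arriving at $i$ number $\sum_{j<i} a_{ji}=a_{i*}$ (the column sum); hence the total number of endpoints on peg $i$ is $\peg_i=a_{i*}+a_{*i}$.

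Next I would introduce an auxiliary set of \emph{labeled diagrams}. For each pair $i<j$ with $a_{ij}>0$, temporarily label the $a_{ij}$ edges between pegs $i$ and $j$ as $e_{ij}^{1},\ldots,e_{ij}^{a_{ij}}$. A labeled diagram is an assignment to each labeled edge of a height on each of its two pegs, subject to the requirement that on every peg $i$ the assigned heights form a bijection onto $\{1,\ldots,\peg_i\}$. Since the $\peg_i$ endpoints sitting on peg $i$ must receive the heights $1,\ldots,\peg_i$ in some order, the number of labeled diagrams is $\prod_{i=1}^{n}\peg_i!=\prod_{i=1}^{n}(a_{i*}+a_{*i})!$, the numerator in the claimed formula.

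The key step is to compare labeled diagrams with the genuine (unlabeled) diagrams of $W$ via the map that forgets the edge labels, sending a labeled diagram to the set of $4$-tuples it determines. I would show this map is surjective onto $W$ and exactly $\prod_{i<j}a_{ij}!$-to-one. Surjectivity is immediate: every diagram of $W$ arises from some height assignment, and any choice of labeling realizes it. For the fibre size, observe that the group $\prod_{i<j}\mathfrak{S}_{a_{ij}}$ acts on labeled diagrams by permuting, within each edge class, which label carries which pair of heights, and that two labeled diagrams have the same image precisely when they lie in one orbit. The main point is that this action is \emph{free}: within the class of edges between pegs $i$ and $j$, the heights appearing on peg $i$ are pairwise distinct (the height assignment on peg $i$ is injective), so the pairs $(a,b)$ attached to $e_{ij}^{1},\ldots,e_{ij}^{a_{ij}}$ are pairwise distinct, and no nontrivial relabeling can fix a labeled diagram. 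Consequently every fibre has size exactly $\prod_{i<j}a_{ij}!$.

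Dividing the count of labeled diagrams by this common fibre size yields $|W|=\prod_{i}(a_{i*}+a_{*i})!\big/\prod_{i<j}a_{ij}!$, as claimed. I expect the only subtle point to be the freeness of the action, that is, verifying that distinct edges in a common class always receive distinct height-pairs, so that the overcount is exactly $\prod_{i<j}a_{ij}!$ rather than something smaller; everything else is routine bookkeeping.
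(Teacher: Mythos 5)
Your proof is correct and is essentially the paper's own argument: both count height assignments after making the edges within each class distinguishable (your labeled diagrams correspond exactly to the paper's filling of the hooks of $\wdm(D)$ with ordered slots), yielding $\prod_{i=1}^{n}(a_{i*}+a_{*i})!$, and then divide by $\prod_{i<j}a_{ij}!$ to pass to genuine diagrams. The only difference is presentational: you justify the division by exhibiting a free action of $\prod_{i<j}\mathfrak{S}_{a_{ij}}$ and counting orbits (making explicit the distinctness of height-pairs that the exact overcount relies on), whereas the paper enforces a canonical representative by requiring the pairs in each cell of $\wdm(D)$ to be listed in increasing order of first entry.
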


\begin{proof}
  The number of diagrams $D \in W$ is the number of ways to form an
  $n\times n$ matrix $X=\wdm(D)$ whose entries consist of sets of
  ordered pairs that satisfy the following hook property for all
  $1\leq i \leq n$: The set $A \cup B$ is a permutation of the set
  $\{1,\ldots,x_{1i}+\cdots+x_{ii}+\cdots +x_{in}\}$ where $A$ is the
  set of all first elements of pairs in the sets
  $X_{i,i+1},\ldots,X_{i,n}$ and $B$ is the set of all second elements
  of pairs in the sets $X_{1,i},X_{2,i},\ldots,X_{i-1,i}$.

  The sets in each matrix entry can be replaced with the sequence of
  pairs wherein the pairs of elements are listed in increasing order
  of their first entry.

  The entries of the matrix $A=\Represent(W)$ tells us the
  cardinalities of the entries of $X$, i.e. $a_{ij}=|X_{ij}|$.  We
  count the number of ways to fill the entries with respect to the
  hook $X_{1,i},\ldots,X_{i,i},\ldots,X_{i,n}$.  Entries in the
  vertical part of the hook get inserted as second entries of the
  pairs.  Entries of the horizontal part of the hook get inserted as
  the first entries of pairs.  There are a total of
  $(a_{1,i}+\dotsb +a_{i,i}+\dotsb + a_{i,n})! = (a_{*i}+a_{i*})!$ ways
  to to this.  However we must divide by $a_{i,i+1}! \dotsb a_{i,n}!$
  as the sequence of first entries at each matrix entry must be
  increasing.  The result follows by multiplying together the number
  of ways of filling each of the $n$ hooks on the diagonal.
\end{proof}

\section{A brief interlude on colouring sequences}

Before looking at three examples of web worlds we collect here some
notation and terminology that will be used in subsequent sections.
Most of the notation and terminology concern colourings and we remind
the reader that constructing new web diagrams from a given web diagram
by means of a colouring was defined in Definition~\ref{twonine}.

Let
$$\Colours{n}{k} = \{ (c(1),\ldots,c({n})) ~:~ \{c(1),\ldots,c({n})\}=\{1,\ldots,k\}\}.$$
The set $\Colours{n}{k}$ is the set of all colour sequences of length
$n$ where each of the colours $\{1,\ldots,k\}$ appears at least once.
When performing calculations with respect to colour sequences, we will
find it useful to decompose a colour sequence as a list of colours as
they appear from left to right in the colour sequence (so that no two
adjacent colours are the same) along with a sequence which gives the
multiplicity of each unique colour.

Let $\Keys{n}{k}{} = \{\vec{c} \in \Colours{n}{k} : c(i) \neq c(i+1)
\mbox{ for all } 1\leq i< n\}$, the set of all colour sequences of
length $n$ which contain no two adjacent colours.  Define
\begin{align*}
  \Keys{n}{k}{=} &= \{ c \in \Keys{n}{k}{} ~:~ c(1)=c(n)\}\\
  \Keys{n}{k}{\neq} &= \{ c \in \Keys{n}{k}{} ~:~ c(1)\neq c(n)\}.
\end{align*}
This partition of $\Keys{n}{k}{}$ will be of interest in
Section~\ref{casethree} when we consider the colouring of an edge
between the first peg and last peg in a web diagram.

\begin{lemma} \label{keyslemma}
Let $n,k\in\N$ with $1\leq k \leq n$. Then
  \begin{align*}
    |\Keys{n}{k}{}| &= \sum_{i=0}^k (-1)^{k-i} {k \choose i} i (i-1)^{n-1}\\
    |\Keys{n}{k}{\neq}| &= \sum_{i=0}^k (-1)^{k-i} {k \choose i} \left( (i-1)^{n} +(i-1)(-1)^n \right)\\
    |\Keys{n}{k}{=}| &= \sum_{i=0}^k (-1)^{k-i} {k \choose i} \left( (i-1)^{n-1} +(i-1)(-1)^{n-1} \right).
  \end{align*}
\end{lemma}

\begin{proof}
  Since $n(n-1)^{k-1} = \sum_{i=0}^k {k \choose i} |\Keys{n}{i}{}|$
  for all $0\leq k \leq n$, inclusion-exclusion gives $|\Keys{n}{k}{}|
  = \sum_i (-1)^{k-i} {k \choose i} i(i-1)^{n-1}$ for all $0\leq k
  \leq n$.  Define $\NoEquals_{n,k} = \{(a_1,\ldots,a_n) ~:~ a_1\leq
  a_2,a_2\neq a_3,\ldots,a_m \neq a_1 \mbox{ and } a_i \in
  \{1,\ldots,k\}\}$.  This set is enumerated $|\NoEquals_{n,k}| =
  (k-1)^n+ (k-1)(-1)^n$.  We have
$$
\Keys{n}{k}{\neq} = \{a \in \NoEquals_{n,k} ~:~ \{a_1,\ldots,a_n\} =
\{1,\ldots ,k\}\}.
$$
Since $|\NoEquals_{n,k}| = \sum_{\ell} {k \choose \ell}
|\Keys{n}{\ell}{\neq}|$ for all $0\leq k \leq n$, inclusion-exclusion
yields
$$
|\Keys{n}{k}{\neq}| = \sum_i (-1)^{k-i} {k \choose i}
|\NoEquals_{n,i}| = \sum_i (-1)^{k-i} {k \choose i} \left( (i-1)^{n}
  +(i-1)(-1)^n \right),
$$ 
hence the second equation.  Subtracting the second equation from the
first equation in the statement of the Lemma yields the third
equation.
\end{proof}

Every $c \in \Colours{n}{k}$ can be uniquely expressed as a pair
$\langle w,A\rangle $, where $A= \{2\leq i \leq n: c(i) = c({i-1})\}
\subseteq [2,n]$, and $w=(c(i))_{i \geq 1 \atop i \not\in A} \in
\Keys{m}{k}{}$.  For example, if $c=(4,2,1,1,1,2,3,3,3,4,4,1)$ then we
have $A=(4,5,8,9,11)$ and $w=(4,2,1,2,3,4,1)$.  Notice that if
$c=\langle w,A\rangle $ where $c \in \Colours{n}{k}$ where $|A|=n-m$
and $w \in \Keys{m}{k}{}$, then $$\equ(c) ~:=~ |\{1\leq i < n :
c(i)=c(i+1)\}| ~=~ n-m.$$

\section{Case 1: A web world with $\Pegs(D)=(1,1,\ldots,1,n)$ }
Fix $n \in \N$ and let $W(n)$ be the web world of all diagrams
\begin{align}
  D&=D_{\pi} = \{ e_i= (\pi(i),n+1,1,i)~:~ 1\leq i \leq n\} \label{deepi}
\end{align}
where $\pi$ is a permutation of $\{1,\ldots,n\}$. 
An example of such a web diagram for $n=6$ is illustrated in Figure~\ref{caseoneexample}.

\begin{figure}
  \includegraphics{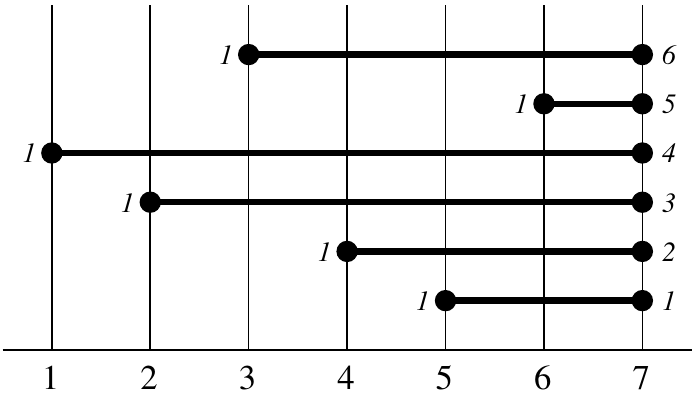}
  \caption{\label{caseoneexample} Example of a web diagram in a
    $\Pegs(D) = (1,1,1,1,1,1,6)$ web world.  This is represented by
    $D_{\pi}$ in Equation~(\ref{deepi}) with $\pi=(5,4,2,1,6,3)$.}
\end{figure}

\begin{definition}
  Given $c \in \Colours{n}{k}$, let $\alpha_c$ be the
  lexicographically smallest permutation in $\sn$ such that
$$c({\alpha_c(1)}) \leq c({\alpha_c(2)}) \leq \cdots \leq c({\alpha_c(n)}).$$
\end{definition}

Another way to read $\alpha_c$ from $c$ is to first list the positions
(in increasing order) at which 1 appears in $v$, then do the same for
2, and so on.

\begin{example}
  If $c=(3,1,1,2,1,2,2,3) \in \Colours{8}{3}$ then
  $\alpha_c=(2,3,5,4,6,7,1,8)$.
\end{example}

\begin{lemma}
  Given $D_{\pi} \in W(n)$ and $c \in \Colours{n}{k}$,
  $\Recon{D_{\pi}}{c} = D_{\pi \circ \alpha_c}$.
\end{lemma}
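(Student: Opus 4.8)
The plan is to compute $\Recon{D_{\pi}}{c}$ directly from Definition~\ref{twonine} and to recognise the resulting height assignment on peg $n+1$ as the permutation $\pi\circ\alpha_c$. First I would record the combinatorial shape of $D_{\pi}$: since $\Pegs(D_{\pi})=(1,\ldots,1,n)$, each peg $p\in\{1,\ldots,n\}$ carries a single edge at height $1$, while peg $n+1$ carries all $n$ edges, the edge at height $i$ being the one joining to peg $\pi(i)$. Crucially, because $\pi$ is a permutation, each of the first $n$ pegs meets exactly one edge, so however the edges are partitioned by a colouring, every first-type peg lies in exactly one colour class and keeps a single endpoint at height $1$ throughout the reconstruction. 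Thus all the bookkeeping happens on peg $n+1$.

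Next I would unwind the colour classes. Writing the positions at which colour $j$ occurs as $i^{(j)}_1<\cdots<i^{(j)}_{m_j}$ (so $m_j=|D_c(j)|$), the class $D_c(j)$ consists of the edges $(\pi(i^{(j)}_s),n+1,1,i^{(j)}_s)$ for $1\le s\le m_j$. Applying $\rel$ leaves the first-type heights equal to $1$ and, on peg $n+1$, rewrites the heights $i^{(j)}_1<\cdots<i^{(j)}_{m_j}$ as $1,\ldots,m_j$ while preserving their order, so that $\rel(D_c(j))=\{(\pi(i^{(j)}_s),n+1,1,s):1\le s\le m_j\}$.

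Then I would apply the sum in the order $j=1,2,\ldots,k$. By the definition of $\oplus$, stacking $\rel(D_c(j))$ on top of the previous classes shifts its peg-$(n+1)$ heights upward by $m_1+\cdots+m_{j-1}$, the first-type heights being unaffected since peg $\pi(i^{(j)}_s)$ does not occur in the lower classes. Hence the edge $(\pi(i^{(j)}_s),n+1,1,s)$ lands at global height $t:=m_1+\cdots+m_{j-1}+s$ on peg $n+1$, contributing $(\pi(i^{(j)}_s),n+1,1,t)$ to $\Recon{D_{\pi}}{c}$. The key observation is that the map $t\mapsto i^{(j)}_s$ determined by this block structure is exactly $\alpha_c$: running $t$ from $1$ to $n$ lists the positions of colour $1$ in increasing order, then those of colour $2$, and so on, which is precisely the reading of $\alpha_c$ given just after its definition. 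Therefore the edge at height $t$ joins peg $\pi(\alpha_c(t))=(\pi\circ\alpha_c)(t)$, and $\Recon{D_{\pi}}{c}=\{((\pi\circ\alpha_c)(t),n+1,1,t):1\le t\le n\}=D_{\pi\circ\alpha_c}$.

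The only genuinely delicate point is the identification of the block indexing coming from $\oplus$ with the permutation $\alpha_c$; everything else is routine unwinding of the definitions. I would make that identification precise by noting that both the concatenation $(i^{(1)}_1,\ldots,i^{(1)}_{m_1},i^{(2)}_1,\ldots,i^{(k)}_{m_k})$ produced by $\oplus$ and the one-line form of $\alpha_c$ sort the indices $1,\ldots,n$ primarily by the colour value $c(\cdot)$ and secondarily in increasing order, the lexicographic-minimality clause in the definition of $\alpha_c$ being exactly what forces the increasing order within each colour block.
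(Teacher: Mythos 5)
Your proof is correct and follows essentially the same route as the paper's: both unwind Definitions~\ref{twofive}, \ref{twoseven} and \ref{twonine}, observe that the first $n$ pegs carry one endpoint each so all height bookkeeping happens on peg $n+1$, and identify the concatenation of the colour classes (each listed in increasing order of position) with the permutation $\alpha_c$. The paper phrases this via the $\oplus$-decomposition of $D_\pi$ into single-edge diagrams while you track the height offsets explicitly, but this is a cosmetic difference only.
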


\begin{proof}
  Let $D_{\pi} \in W(n)$ and $c \in \Colours{n}{k}$.  Notice that
  edges can only share an endpoint on peg $n+1$. For this reason we
  have
$$D_{\pi} = 
(\pi(1),n+1,1,1) \oplus (\pi(2),n+1,1,1) \oplus \cdots \oplus
(\pi(n),n+1,1,1) .
$$
For all $j \in \{1,\ldots,k\}$ let $X_j=\{i:c(i)=j\}$ and define $Y_j$
to be the sequence of elements of $X_i$ written in increasing order.
Let $Z_j$ be the sequence of elements of $Y_1:Y_2:\cdots :Y_j$ where
$:$ denotes sequence concatenation and define $Z_0 =\emptyset$.  From
Definitions~\ref{twofive} and \ref{twonine} we have for all $j \in
\{1,\ldots,k\}$,
$$\rel (D_c(j)) = \bigoplus_{i=|Z_{j-1}|}^{|Z_j|} (\pi(\alpha_c(i)),n+1,1,1).$$
Thus
$$
\Recon{D_{\pi}}{c} ~=~ \bigoplus_{i=1}^{|Z_k|} (\pi(\alpha_c(i)),
n+1,1,1) ~=~ D_{\pi \circ \alpha_c}.\qedhere
$$
\end{proof}

\begin{example}
  Given $\pi = (2,8,5,4,1,3,7,6) \in \sss_8$ and $c=(3,1,1,2,1,2,2,3)
  \in \Colours{8}{3}$, we have $\alpha_c=(2,3,5,4,6,7,1,8)$ and $\pi
  \circ \alpha_c = (8,5,1,4,3,7,2,6)$.
\end{example}

For every pair $D_{\pi}$ and $D_{\sigma}$ of diagrams in $W(n)$, there
always exists a colouring of $D_{\pi}$ which will yield $D_{\sigma}$.
There is a unique colouring with a minimal number of colours which
achieves this and is described as follows:

\begin{definition}
  Let $\Minimal(\pi,\sigma)=(X_1,\ldots,X_{\ell})$ be the sequence of
  sequences obtained by repeatedly reading $\pi$ from left to right 
  until all the letters have been read, in the order in which they
  appear in $\sigma$.  
Define $\minimal(\pi,\sigma)=\ell$ and
  if $\pi_j \in X_i$ then set $c(j) = i$.
\end{definition}

\begin{example}
  If $\pi=(2,8,5,4,1,3,7,6)$ and $\sigma=(8,5,1,4,3,7,2,6)$ then we
  have $\Minimal(\pi,\sigma)=((8,5,1),(4,3,7),(2,6))$.  This means
  $\minimal(\pi,\sigma)=3$ and the unique colouring having fewest
  colours which transforms $D_{\pi}$ into $D_{\sigma}$ is
  $c=(1,3,2,2,1,3,2,1)$.
\end{example}

\begin{lemma}\label{lemmaone}
  Let $D_{\pi},D_{\sigma} \in W(n)$ with $\minimal(\pi,\sigma)=m$.
  Given $k$ with $m\leq k \leq n$, the number of $c \in
  \Colours{n}{k}$ for which $\pi \circ \alpha_c=\sigma$ is
  $f(D_{\pi},D_{\sigma},k)={n-m \choose k-m}$.
\end{lemma}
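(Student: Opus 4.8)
The plan is to turn the equation $\pi\circ\alpha_c=\sigma$ into a statement purely about the colour sequence and then count. Since $\pi$ is invertible, $\pi\circ\alpha_c=\sigma$ is equivalent to $\alpha_c=\pi^{-1}\sigma$; write $\tau:=\pi^{-1}\sigma\in\sn$. Thus $f(D_{\pi},D_{\sigma},k)$ is the number of $c\in\Colours{n}{k}$ with $\alpha_c=\tau$, and the whole lemma becomes the assertion that this number is $\binom{n-m}{k-m}$. Before counting I would record the translation $\minimal(\pi,\sigma)=1+\des(\tau)$: the blocks produced by $\Minimal(\pi,\sigma)$ are exactly the maximal ascending runs of $\tau$ (a new block begins precisely at a descent of $\tau$), so $m=1+\des(\tau)$.

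The key step is to characterize the colourings with $\alpha_c=\tau$. Put $g_i:=c(\tau(i))$. Since $\alpha_c$ is the lexicographically least permutation that sorts the colours weakly increasingly, $\alpha_c=\tau$ holds if and only if (a) $g_1\le g_2\le\cdots\le g_n$, and (b) at each descent $i$ of $\tau$ the step is strict, $g_i<g_{i+1}$. Here (a) is just the sorting requirement, while (b) encodes lexicographic minimality: positions carrying the same colour must be listed in increasing order, and this fails exactly when an equality $g_i=g_{i+1}$ coincides with a descent $\tau(i)>\tau(i+1)$.

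Now I would impose surjectivity. As $(g_i)$ is a rearrangement of $(c(1),\ldots,c(n))$, the condition $c\in\Colours{n}{k}$ says $(g_i)$ is a weakly increasing surjection onto $\{1,\ldots,k\}$; equivalently $g_1=1$, $g_n=k$, each difference $g_{i+1}-g_i$ lies in $\{0,1\}$, and exactly $k-1$ of the $n-1$ differences equal $1$. By (b) the $\des(\tau)=m-1$ differences sitting at descents of $\tau$ are forced to equal $1$; the remaining $n-m$ differences are free, and I must switch on exactly $(k-1)-(m-1)=k-m$ of them. This yields $\binom{n-m}{k-m}$ admissible difference-patterns, and the assignment $c\mapsto(g_i)$ is a bijection onto these patterns (its inverse being $c(j)=g_{\tau^{-1}(j)}$), so $f(D_{\pi},D_{\sigma},k)=\binom{n-m}{k-m}$, which also vanishes correctly when $k<m$ or $k>n$.

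I expect the only real obstacle to be the equivalence in step (b): one must argue carefully that the lexicographic-minimality clause in the definition of $\alpha_c$ is exactly equivalent to forbidding colour-ties at descents of $\tau$, and, hand in hand with this, justify the identification $\minimal(\pi,\sigma)=1+\des(\tau)$ from the description of $\Minimal$. Once those two combinatorial translations are secured, the remainder is the elementary count of $0/1$ sequences above.
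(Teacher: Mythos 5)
Your proof is correct and is at heart the same count as the paper's: after conjugating by $\pi^{-1}$, your maximal ascending runs of $\tau=\pi^{-1}\sigma$ are exactly the paper's blocks $\Minimal(\pi,\sigma)=(X_1,\ldots,X_m)$, and choosing which $k-m$ of the $n-m$ non-descent differences $g_{i+1}-g_i$ equal $1$ corresponds precisely to the paper's choice of $k-m$ additional cut points when each block $X_i$ is refined into consecutive sub-blocks forming the $k$ colour classes. The only substantive difference is one of rigor: your characterization of $\alpha_c=\tau$ (weak increase of $g$ plus strict rises at descents of $\tau$), together with the dictionary $m=1+\des(\tau)$, supplies exactly the justification that the paper compresses into ``once the minimal colouring for converting $\pi$ to $\sigma$ is given, all other colourings easily follow from this.''
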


\begin{proof}
  Let $D_{\pi},D_{\sigma} \in W(n)$ with $\minimal(\pi,\sigma)=m$ and
  $\Minimal(\pi,\sigma)=(X_1,\ldots,X_{m})$. Let $x_i=|X_i|$.  Once
  the minimal colouring for converting $\pi$ to $\sigma$ is given, all
  other colourings easily follow from this.  All elements in $X_i$
  have colour $i$ and $\sigma$ is the permutation $X_1X_2\cdots X_m$.
  We may partition each of these sequences $X_i$ individually, to
  $(Y_1,\ldots,Y_k)$ whereby $Y_1\cdots Y_{\beta(1)}=X_1$,
  $Y_{\beta(1)+1}\cdots Y_{\beta(2)}=X_2$, and so on.  The number of
  ways of doing this is ${(x_1-1)+\ldots+(x_m-1) \choose (k-1)-(m-1)}$
  since there are $m-1$ positions already specified from the minimal
  colouring.  This value is ${n-m \choose k-m}$.
\end{proof}

\begin{theorem} 
  Suppose that $D_{\pi},D_{\sigma} \in W(n)$ with
  $m=\minimal(\pi,\sigma)$.  Then
  \begin{align}
  \MMmatrix{(W(n))}{(x)}{D_1,D_2} = x^m (1+x)^{n-m} 
  &\quad\mbox{ and }\quad
  \RRmatrix{(W(n))}{D_{\pi},D_{\sigma}} = 
  \dfrac{(-1)^{m-1}}{n {n-1\choose m-1}}.
\end{align}
Consequently, 
$$\begin{array}{lcl@{\qquad}lcl}
\RRmatrix{(W(n))}{D_{\pi},D_{\pi}} &=& 1/n,  & \trace\left(\RRmatrix{(W(n))}{}\right) &=& (n-1)! \\[0.75em]
\MMmatrix{(W(n)}{(x)}{D_{\pi},D_{\pi}}&=&x(1+x)^{n-1}, & \trace\left(\MMmatrix{(W(n)}{(x)}{}\right) &=& n!x(1+x)^{n-1}.
\end{array}$$
\end{theorem}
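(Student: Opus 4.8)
The plan is to combine the colouring count from Lemma~\ref{lemmaone} with the definitions of the two matrices, followed by a short beta-function integral. First I would note that any colouring of $D_\pi$ that reconstructs $D_\sigma$ must use at least $m=\minimal(\pi,\sigma)$ colours (by minimality of $m$) and at most $n$ colours (there are only $n$ edges, and colourings are surjective), so $f(D_\pi,D_\sigma,k)=0$ unless $m\leq k\leq n$, where Lemma~\ref{lemmaone} supplies $f(D_\pi,D_\sigma,k)=\binom{n-m}{k-m}$. Substituting this into the definition $\MMmatrix{(W(n))}{(x)}{D_\pi,D_\sigma}=\sum_{k\geq 1}x^k f(D_\pi,D_\sigma,k)$ and reindexing by $j=k-m$ reduces the sum to $x^m\sum_{j=0}^{n-m}\binom{n-m}{j}x^j$, which the binomial theorem collapses to $x^m(1+x)^{n-m}$, giving the first formula.

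For the web-mixing entry I would feed this polynomial into formula~\eqref{calculateR}, obtaining $\RRmatrix{(W(n))}{D_\pi,D_\sigma}=\int_{-1}^{0}x^{m-1}(1+x)^{n-m}\,dx$. The substitution $x=-t$ turns this into $(-1)^{m-1}\int_0^1 t^{m-1}(1-t)^{n-m}\,dt=(-1)^{m-1}B(m,n-m+1)$. Evaluating $B(m,n-m+1)=(m-1)!\,(n-m)!/n!$ and recognising this quantity as $1/\bigl(n\binom{n-1}{m-1}\bigr)$ yields the claimed closed form $\RRmatrix{(W(n))}{D_\pi,D_\sigma}=(-1)^{m-1}/\bigl(n\binom{n-1}{m-1}\bigr)$. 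This beta-function bookkeeping is the only step needing care, and it duplicates exactly the manipulation already carried out in the proof of Theorem~\ref{thm33}, so I would simply cite that computation.

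The four consequences then follow by specialisation. For the diagonal I would observe that $\minimal(\pi,\pi)=1$: reading $\pi$ in the order its letters appear in $\pi$ itself exhausts $\pi$ in a single pass, so $m=1$. Setting $m=1$ in the two formulas gives $\MMmatrix{(W(n))}{(x)}{D_\pi,D_\pi}=x(1+x)^{n-1}$ and $\RRmatrix{(W(n))}{D_\pi,D_\pi}=1/n$. Finally, since $W(n)$ contains exactly $n!$ diagrams---one $D_\pi$ for each $\pi\in\sn$, distinct permutations producing distinct assignments of left endpoints to the heights on peg $n+1$---summing the constant diagonal entries over all diagrams gives $\trace\bigl(\MMmatrix{(W(n))}{(x)}{}\bigr)=n!\,x(1+x)^{n-1}$ and $\trace\bigl(\RRmatrix{(W(n))}{}\bigr)=n!\cdot(1/n)=(n-1)!$. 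I do not anticipate a genuine obstacle: Lemma~\ref{lemmaone} carries all the combinatorial weight, and what remains is the routine integral together with the single observation that the diagonal is precisely the case $m=1$.
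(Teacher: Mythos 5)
Your proposal is correct and follows essentially the same route as the paper: Lemma~\ref{lemmaone} plus the binomial theorem for the web-colouring entry, the beta-function evaluation of formula~(\ref{calculateR}) for the web-mixing entry, and the specialisation $\minimal(\pi,\pi)=1$ together with $|W(n)|=n!$ for the diagonal entries and traces. The only (harmless) differences are that you use the $\int_{-1}^{0}$ form of~(\ref{calculateR}) where the paper uses the equivalent $-\int_{0}^{1}$ form, and you spell out two points the paper leaves implicit, namely that $f(D_\pi,D_\sigma,k)=0$ outside $m\leq k\leq n$ and that distinct permutations give distinct diagrams.
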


\begin{proof}
  Let $m=\minimal(\pi,\sigma)$.  Using Lemma~\ref{lemmaone} we have
  \begin{align}
  \MMmatrix{(W(n))}{(x)}{D_1,D_2} &= \sum_{k=m}^n x^k f(D_{\pi},D_{\sigma},k) = \sum_{k=m}^n x^k {n-m \choose k-m} = x^m (1+x)^{n-m}.
\end{align}
Using Equation~(\ref{calculateR}) we have
\begin{align*}
  \RRmatrix{(W(n))}{D_{\pi},D_{\sigma}} 
  &= -\int_0^1 \dfrac{\MMmatrix{(W(n)}{(-x)}{D_{\pi},D_{\pi}}}{x}\, dx \\
  &= (-1)^{m-1} \int_0^1 x^{m-1} (1-x)^{n-m}\, dx \\
  &= (-1)^{m-1} B(m,n-m+1), \\
  &= \dfrac{(-1)^{m-1}}{n {n-1\choose m-1}},
\end{align*}
where $B$ is the beta function.
Since $\minimal(\pi,\pi)=1$ we have that
$\RRmatrix{(W(n))}{D_{\pi},D_{\pi}} = 1/n$ and
$\MMmatrix{(W(n)}{(x)}{D_{\pi},D_{\pi}}=x(1+x)^{n-1}$.  Therefore
$\trace(\RRmatrix{(W(n))}{})=n!/n=(n-1)!$ and
$\trace(\MMmatrix{(W(n)}{(x)}{}=n!x(1+x)^{n-1}$.
This result is consistent with Equation~(\ref{equationthree}) as there is only one linear extension of the poset in this setting.
\end{proof}

Let $\sigma \in \sn$. It is straightforward to check that
$\minimal(\mbox{id},\sigma)=1+\des(\sigma)$.  Thus the number of
$\sigma$ in the top row of $\MMmatrix{(W(n))}{(x)}{}$ for which
$\minimal(\mbox{id},\sigma)=k$ is the Eulerian number $\left\langle {
    n \atop k}\right\rangle$.  The same observation yields that for
any $\pi \in \sn$, the number of $\sigma \in \sn$ such that
$\minimal(\pi,\sigma)=k$ is the Eulerian number $\left\langle { n
    \atop k} \right\rangle$.  This fact now accounts for value
multiplicities in the rows and columns of $\MMmatrix{(W(n))}{(x)}{}$
and $\RRmatrix{(W(n))}{}$.

\section{Case 2: A web world with $\Pegs(D)=(1,2,\ldots,2,1)$}\label{case2}
\label{sec:7}
Fix $n \in \mathbb{N}$ and let $W(n)$ be the collection of all web diagrams
\begin{align}\label{casetwotype}
  D &= \left\{ e_i = (i,i+1,x_i,y_{i+1}) ~:~ 1 \leq i \leq n+1  \right\}
\end{align}
where $x_1=y_{n+2}=1$ and $\{x_i,y_i\} = \{1,2\}$ for all $2 \leq i
\leq n+1$.  $W(n)$ is the set of all web diagrams consisting of $n+2$
pegs, $n+1$ lines whose endpoints are on adjacent pegs, and the only
pegs that have one endpoint rather than two endpoints are pegs 1 and 
$n+2$.  An example of a web diagram in $W(4)$ is illustrated in
Figure~\ref{figurecasetwo}.

\begin{figure}[h!]
  \includegraphics{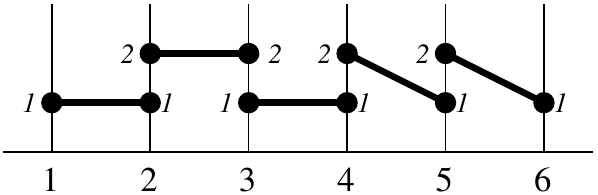}
  \caption{\label{figurecasetwo}An example of a web diagram of the
    form given in Equation~(\ref{casetwotype}).}
\end{figure}

Every diagram $D \in W(n)$ can be uniquely encoded as a sequence $\pi
\in (\pm 1)^n$ where
\begin{align*}
  \pi(i) &= \left\{   \begin{array}{cl}
      +1 & \mbox{ if } y_{i+1} = 1 \mbox{ and } x_{i+1}=2 \\
      -1 & \mbox{ if } y_{i+1} = 2 \mbox{ and } x_{i+1}=1 .
    \end{array} \right.
\end{align*}
  See Figure~\ref{piencode} for an illustration of this.  The example
  in Figure~\ref{figurecasetwo} is encoded by $\pi=(+1,-1,+1,+1)$.  We
  denote by $D_{\pi}$ the diagram that is encoded by the sequence
  $\pi$.

\begin{figure}[h!]
  \centerline{ \def\oheight{0.4} \def\xd{3}
	\includegraphics{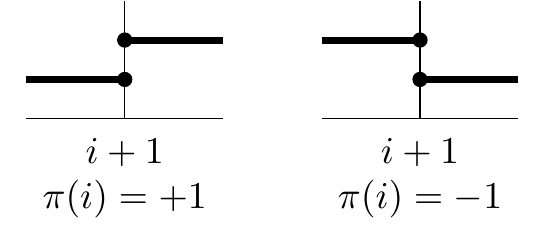}
}
\caption{\label{piencode}The value of $\pi(i)$ depends on how the
  edges meet at peg $i+1$.}
\end{figure}

A colouring of $D_{\pi}$ is a sequence $c \in \Colours{n+1}{k}$ where
$c(i)$ is the colour of edge $e_i$.  Given $c \in \Colours{n+1}{k}$
let $\DEA(c)=(\Des(c),\Equ(c),\Asc(c))$ where
\begin{align*}
  \Des(c) &=\{1\leq i \leq n: c(i)>c({i+1})\},\\
  \Equ(c) &= \{1\leq i \leq n : c(i)=c({i+1})\}, \mbox{ and}\\
  \Asc(c) &= \{1\leq i \leq n: c(i)<c({i+1})\}.
\end{align*}
We refer to $\Des(c)$, $\Equ(c)$ and $\Asc(c)$ as the {\it{descent}},
{\it{plateau}}, and {\it{ascent sets}}, respectively.  Let
$\Weuler_{n,k}(X_1,X_2,X_3) = \{ c \in \Colours{n+1}{k} :
\DEA(c)=(X_1,X_2,X_3) \}$, the set of colourings of $\Colours{n}{k}$
whose descent, plateau and ascent sets are given by $X_1$, $X_2$ and
$X_3$, respectively.  Set $\weuler_{n,k}(X_1,X_2,X_3) =
|\Weuler_{n,k}(X_1,X_2,X_3)|$.

Given diagrams $D_{\pi},D_{\sigma} \in W(n)$, let
$$Y(\pi,\sigma) = ( Y_1(\pi,\sigma), Y_2(\pi,\sigma), Y_3(\pi,\sigma), Y_4(\pi,\sigma))$$
be an ordered partition of $\{1,\ldots,n\}$ where
\begin{align*}
  Y_{1}(\pi,\sigma) &= \{ 1\leq i \leq n ~:~ \pi_i=+1 \mbox{ and } \sigma_i = -1 \} \\
  Y_{2}(\pi,\sigma) &= \{ 1\leq i \leq n ~:~ \pi_i=-1 \mbox{ and } \sigma_i = -1 \} \\
  Y_{3}(\pi,\sigma) &= \{ 1\leq i \leq n ~:~ \pi_i=+1 \mbox{ and } \sigma_i = +1 \} \\
  Y_{4}(\pi,\sigma) &= \{ 1\leq i \leq n ~:~ \pi_i=-1 \mbox{ and } \sigma_i = +1 \}.
\end{align*}
(We can summarise these more compactly as $i \in Y_{(5+2\sigma_i -
  \pi_i)/2} (\pi,\sigma)$ for all $1\leq i \leq n$.)

\begin{theorem}\label{casetwocount}
  Let $k \in \mathbb{N}$. Let $D_{\pi},D_{\sigma} \in W(n)$ and
  suppose that $Y(\pi,\sigma) = (Y_1,Y_2,Y_3,Y_4)$.  Then
  \begin{align*}
  F(D_{\pi},D_{\sigma},k) &= \bigcup_{A \subseteq Y_2 \atop B \subseteq Y_3} \Weuler_{n,k}(Y_1 \cup A, Y_2\cup Y_3 - (A \cup B), Y_4 \cup B),\\
  f(D_{\pi},D_{\sigma},k) &= \sum_{A \subseteq Y_2 \atop B \subseteq Y_3} \weuler_{n,k}(Y_1 \cup A, Y_2\cup Y_3 - (A \cup B), Y_4 \cup B).
\end{align*}
\end{theorem}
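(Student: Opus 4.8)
The plan is to reduce the condition $\Recon{D_\pi}{c} = D_\sigma$ to a purely local statement about the colour sequence $c$ at each interior peg, and then to match that statement against the four-part partition $Y(\pi,\sigma)$. The central observation is a \emph{local reconstruction rule}: if $c \in \Colours{n+1}{k}$ is a colouring of $D_\pi$, then $\Recon{D_\pi}{c} = D_\rho$ for the sign sequence $\rho$ given, for each $1 \le i \le n$, by
$$
\rho(i) = \begin{cases} +1 & i \in \Asc(c),\\ -1 & i \in \Des(c),\\ \pi(i) & i \in \Equ(c). \end{cases}
$$
To justify this, recall that in $W(n)$ peg $i+1$ carries exactly two endpoints, namely the right endpoint of $e_i$ (at height $y_{i+1}$) and the left endpoint of $e_{i+1}$ (at height $x_{i+1}$), and that $\rho(i)$ simply records which of these two sits lower after reconstruction. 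By Definition~\ref{twonine} the reconstruction is $\rel(D_c(1)) \oplus \cdots \oplus \rel(D_c(k))$, and the sum $\oplus$ stacks later summands on top, so the colour-$j$ block lies below the colour-$(j{+}1)$ block. Hence if $c(i)<c(i+1)$ the endpoint of $e_i$ is pushed below that of $e_{i+1}$, forcing $y_{i+1}=1,\ x_{i+1}=2$ and so $\rho(i)=+1$; if $c(i)>c(i+1)$ the opposite holds and $\rho(i)=-1$; and if $c(i)=c(i+1)$ both endpoints lie in the same block, which is the only block meeting peg $i+1$, so $\rel$ preserves their relative order and $\rho(i)=\pi(i)$ is inherited unchanged from $D_\pi$.

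Next I would impose $\rho=\sigma$ and read off the constraints on $c$ case by case according to which block $Y_r$ contains the index $i$. An index $i\in Y_1$ has $\pi(i)=+1,\ \sigma(i)=-1$, so $\rho(i)=-1$ can only arise from a descent, forcing $i\in\Des(c)$; symmetrically $i\in Y_4$ forces $i\in\Asc(c)$. For $i\in Y_2$ we have $\pi(i)=\sigma(i)=-1$, so $\rho(i)=-1$ is produced either by a descent or by a plateau, giving $i\in\Des(c)\cup\Equ(c)$; dually $i\in Y_3$ gives $i\in\Asc(c)\cup\Equ(c)$. Writing $A=\Des(c)\cap Y_2$ and $B=\Asc(c)\cap Y_3$, these four constraints are together equivalent to
$$
\Des(c) = Y_1 \cup A, \quad \Asc(c) = Y_4 \cup B, \quad \Equ(c) = (Y_2 \cup Y_3) - (A \cup B),
$$
with $A\subseteq Y_2$ and $B\subseteq Y_3$ arbitrary. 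Thus $c\in F(D_\pi,D_\sigma,k)$ if and only if $\DEA(c)=(Y_1\cup A,\ (Y_2\cup Y_3)-(A\cup B),\ Y_4\cup B)$ for some such $A,B$, which is exactly the asserted union. Finally, since $A$ and $B$ are recovered from the triple $\DEA(c)$ as $A=\Des(c)\cap Y_2$ and $B=\Asc(c)\cap Y_3$, distinct pairs $(A,B)$ index disjoint sets $\Weuler_{n,k}(\cdots)$; the union is therefore disjoint and the cardinality formula for $f(D_\pi,D_\sigma,k)$ follows by summing over $A$ and $B$.

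The step I expect to be the main obstacle is the local reconstruction rule, since it requires carefully unwinding the definitions of $\rel$ and $\oplus$ and verifying that only the two edges $e_i,e_{i+1}$ incident with peg $i+1$ can affect the reconstructed sign there (the plateau case being the one that genuinely uses that $\rel$ preserves relative heights within a colour block). Once that rule is in hand the rest is routine bookkeeping on the partition $Y(\pi,\sigma)$.
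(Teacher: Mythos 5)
Your proposal is correct and follows essentially the same route as the paper's proof: a local analysis at each interior peg of how the relative order of the colours $c(i)$, $c(i+1)$ determines the relative heights of the two endpoints after reconstruction, followed by the observation that the four resulting constraints (one per block $Y_r$) are equivalent to $\Des(c)=Y_1\cup A$, $\Equ(c)=(Y_2\cup Y_3)-(A\cup B)$, $\Asc(c)=Y_4\cup B$ for some $A\subseteq Y_2$, $B\subseteq Y_3$. Your packaging of the local analysis as a reconstruction rule $\Recon{D_\pi}{c}=D_\rho$ before imposing $\rho=\sigma$, and your explicit check that distinct pairs $(A,B)$ index disjoint sets (so the union gives the sum), are minor tidy additions to what is otherwise the paper's argument.
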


\begin{proof}
  Consider how changes occur at peg $i+1$ with respect to a colouring
  $c\in \Colours{n+1}{k}$.  If $\pi_i=+1$ and $\sigma_i = -1$, i.e., $i
  \in Y_1$, then the colour $c(i)$ must be greater than the colour
  $c({i+1})$ in order for the edges incident with peg $i+1$ to change
  their relative orders.
  So $c(i)> c({i+1})$ which means $i \in \Des(c)$ and we must have $Y_1 \subseteq \Des(c)$. This is illustrated in the following diagram:\\[1em]
  \centerline{ \def\oheight{0.4} \def\hxd{4} \def\xd{6}
	\includegraphics{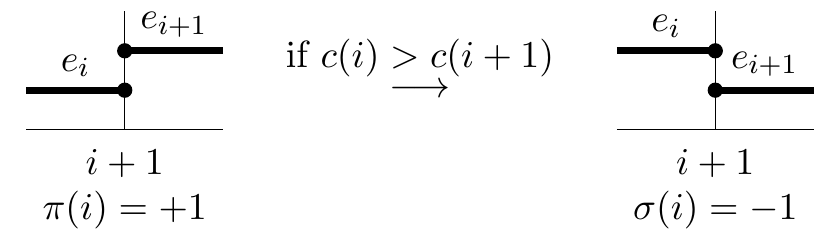}
}

If $\pi_i=-1$ and $\sigma_i = -1$ (i.e. $i \in Y_2$) then the colour
$c(i)$ must be greater than or equal to colour $c({i+1})$ in order for
the edges incident with peg $i+1$ to remain as they are.
So $c(i)\geq  c({i+1})$ which means $i \in \Des(c)\cup \Equ(c)$ and we must have $Y_2 \subseteq \Des(c)\cup \Equ(c)$. This is illustrated in the following diagram:\\[1em]
\centerline{ \def\oheight{0.4} \def\hxd{4} \def\xd{6}
\includegraphics{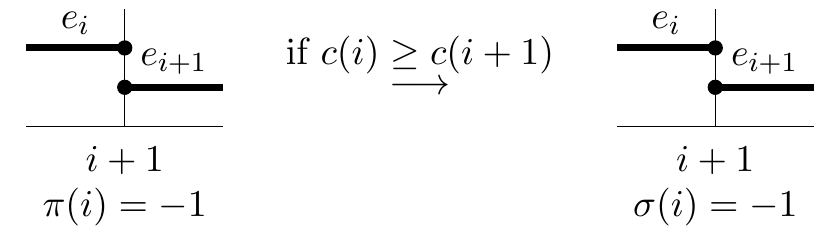}
} If $\pi_i=+1$ and $\sigma_i = +1$ (i.e. $i \in Y_3$) then the colour
$c(i)$ must be less than or equal to colour $c({i+1})$ in order for
the edges incident with peg $i+1$ to remain as they are.
So $c(i) \leq  c({i+1})$ which means $i \in \Asc(c)\cup \Equ(c)$ and we must have $Y_3 \subseteq \Asc(c)\cup \Equ(c)$. This is illustrated in the following diagram:\\[1em]
\centerline{ \def\oheight{0.4} \def\hxd{4} \def\xd{6}
\includegraphics{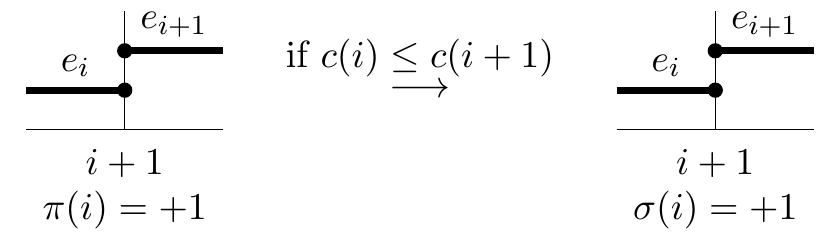}
} If $\pi_i=-1$ and $\sigma_i = +1$ (i.e. $i \in Y_4$) then the colour
$c(i)$ must be less than colour $c({i+1})$ in order for the edges
incident with peg $i+1$ to change their relative orders.
So $c(i) <  c({i+1})$ which means $i \in \Asc(c)$ and we must have $Y_4 \subseteq \Asc(c)$. This is illustrated in the following diagram:\\[1em]
\centerline{ \def\oheight{0.4} \def\hxd{4} \def\xd{6}
\includegraphics{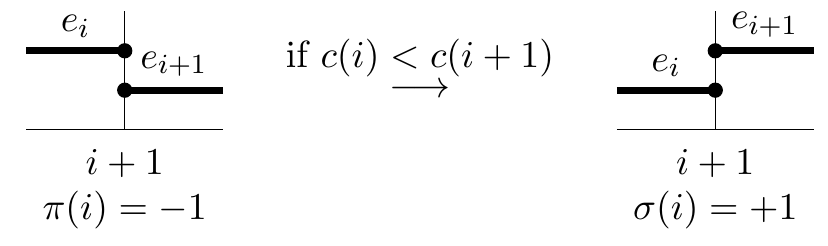}
} Combining these, we see that a colouring $c\in \Colours{n+1}{k}$
transforms $D_{\pi}$ to $D_{\sigma}$ iff $Y_1(\pi,\sigma) \subseteq
\Des(c)$, $Y_2(\pi,\sigma) \subseteq \Des(c) \cup \Equ(c)$,
$Y_3(\pi,\sigma) \subseteq \Asc(c) \cup \Equ(c)$, and $Y_4(\pi,\sigma)
\subseteq \Asc(c)$.  These four conditions are equivalent to $\Des(c)
= Y_1(\pi,\sigma) \cup A$, $\Asc(c) = Y_4(\pi,\sigma) \cup B$, and
$\Equ(c) = Y_2(\pi,\sigma) \cup Y_3(\pi,\sigma) -A-B$ for some sets
$A$ and $B$.
\end{proof}

The quantities $\MMmatrix{(W(n))}{(x)}{D_{\pi},D_{\sigma}} $ and
$\RRmatrix{(W(n))}{D_{\pi},D_{\sigma}}$ can now be calculated using
the expression in Theorem~\ref{casetwocount}.

Notice that for $D_{\pi} \in W(n)$ we have $Y(\pi,\pi)=(\emptyset,
Y_2, \{1,\ldots,n\}-Y_2,\emptyset)$.

\begin{corollary} \label{bermuda} Let $D_{\pi} \in W(n)$ with
  $Y(\pi,\pi)=(\emptyset, Y_2, \{1,\ldots,n\}-Y_2,\emptyset)$. Then
$$\RRmatrix{(W(n))}{D_{\pi},D_{\pi}} = \sum_k \dfrac{(-1)^{k-1}}{k} \sum_{A \subseteq Y_2 \atop B \subseteq \{1,\ldots,n\}-Y_2} \weuler_{n,k} (A,\{1,\ldots, n\}-A-B,B).$$
\end{corollary}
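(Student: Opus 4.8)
The plan is to obtain Corollary~\ref{bermuda} as a direct specialization of Theorem~\ref{casetwocount} to the diagonal case $\sigma=\pi$, followed by substitution into the definition of the web-mixing matrix. First I would recall that, by definition,
\[
\RRmatrix{(W(n))}{D_{\pi},D_{\pi}} = \sum_{k\geq 1} \frac{(-1)^{k-1}}{k}\, f(D_{\pi},D_{\pi},k),
\]
so the whole task reduces to inserting the value of $f(D_{\pi},D_{\pi},k)$ already supplied by Theorem~\ref{casetwocount}.

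Next I would use the observation recorded immediately before the corollary, namely that $Y(\pi,\pi)=(\emptyset,Y_2,\{1,\ldots,n\}-Y_2,\emptyset)$. This holds because $Y_1$ and $Y_4$ both require $\pi_i\neq\sigma_i$, which is impossible when $\sigma=\pi$, while $Y_3=\{i:\pi_i=+1\}$ is exactly the complement of $Y_2=\{i:\pi_i=-1\}$ in $\{1,\ldots,n\}$ since $\pi\in(\pm1)^n$. Substituting $Y_1=Y_4=\emptyset$ and $Y_3=\{1,\ldots,n\}-Y_2$ into the formula of Theorem~\ref{casetwocount} gives
\[
f(D_{\pi},D_{\pi},k) = \sum_{A\subseteq Y_2 \atop B\subseteq Y_3} \weuler_{n,k}\bigl(A,\; Y_2\cup Y_3-(A\cup B),\; B\bigr).
\]

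The only small point to verify is the set-arithmetic simplification of the middle argument. Since $Y_2$ and $Y_3$ partition $\{1,\ldots,n\}$, the ranges of summation force $A\subseteq Y_2$ and $B\subseteq Y_3$ to be disjoint, whence $Y_2\cup Y_3-(A\cup B)=\{1,\ldots,n\}-A-B$; rewriting $Y_3=\{1,\ldots,n\}-Y_2$ in the index of summation then yields
\[
f(D_{\pi},D_{\pi},k) = \sum_{A\subseteq Y_2 \atop B\subseteq \{1,\ldots,n\}-Y_2} \weuler_{n,k}\bigl(A,\; \{1,\ldots,n\}-A-B,\; B\bigr).
\]
Feeding this back into the displayed expression for $\RRmatrix{(W(n))}{D_{\pi},D_{\pi}}$ produces exactly the claimed identity. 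I expect no genuine obstacle here: all the combinatorial content lives in Theorem~\ref{casetwocount}, and the corollary is a bookkeeping specialization, the only care needed being the elementary verification of the partition structure of $Y(\pi,\pi)$ and the consequent disjointness of $A$ and $B$.
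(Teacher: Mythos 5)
Your proposal is correct and is exactly the argument the paper intends: the paper presents the corollary as an immediate specialization of Theorem~\ref{casetwocount} to $\sigma=\pi$ (recording only the observation $Y(\pi,\pi)=(\emptyset,Y_2,\{1,\ldots,n\}-Y_2,\emptyset)$ beforehand), combined with the defining series $\RRmatrix{(W)}{D_1,D_2}=\sum_{\ell\geq 1}\frac{(-1)^{\ell-1}}{\ell}f(D_1,D_2,\ell)$. Your additional verification that $Y_2$ and $Y_3$ partition $\{1,\ldots,n\}$, so that the middle argument simplifies to $\{1,\ldots,n\}-A-B$, is precisely the bookkeeping the paper leaves implicit.
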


The following lemma is needed for the proofs of the two theorems that follow it.
\begin{lemma} \label{helpfulstirling} $\displaystyle\dfrac{1}{k!}
  \sum_{i=0}^k (-1)^{k-i} {k \choose i} (i+1)^n =
  \stirling{n+1}{k+1}$.
\end{lemma}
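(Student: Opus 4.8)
The plan is to deduce this identity from the classical explicit formula for the Stirling numbers of the second kind,
$$\stirling{N}{K} = \frac{1}{K!}\sum_{j=0}^{K}(-1)^{K-j}\binom{K}{j}j^{N},$$
which counts, after multiplying by $K!$, the surjections from an $N$-set onto a $K$-set. First I would apply this with $N=n+1$ and $K=k+1$ and note that the $j=0$ term drops out (since $0^{n+1}=0$), so that
$$\stirling{n+1}{k+1} = \frac{1}{(k+1)!}\sum_{j=1}^{k+1}(-1)^{k+1-j}\binom{k+1}{j}j^{n+1}.$$

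Next I would bring the left-hand side of the Lemma into the same index range. Substituting $i=j-1$ rewrites $\sum_{i=0}^{k}(-1)^{k-i}\binom{k}{i}(i+1)^{n}$ as $\sum_{j=1}^{k+1}(-1)^{k+1-j}\binom{k}{j-1}j^{n}$, so the Lemma reduces to the termwise comparison
$$\frac{1}{k!}\sum_{j=1}^{k+1}(-1)^{k+1-j}\binom{k}{j-1}j^{n} = \frac{1}{(k+1)!}\sum_{j=1}^{k+1}(-1)^{k+1-j}\binom{k+1}{j}j^{n+1}.$$
The bridge between the two sides is the absorption identity $\binom{k+1}{j}=\frac{k+1}{j}\binom{k}{j-1}$: substituting it into the right-hand sum replaces the factor $\frac{1}{(k+1)!}\cdot(k+1)\cdot j^{n+1}/j$ by $\frac{1}{k!}\,j^{n}$ in each summand, after which the two sums agree term by term.

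There is no genuine obstacle here beyond careful bookkeeping; the only subtlety is matching the normalizations $1/k!$ and $1/(k+1)!$, which is exactly what the absorption identity and the shift $i\mapsto i+1$ accomplish. As a fully self-contained alternative I would instead recognize the left-hand side as the finite difference $\frac{1}{k!}(\Delta^{k}g)(0)$ of $g(x)=(x+1)^{n}$; a unit shift gives $\frac{1}{k!}(\Delta^{k}h)(1)$ for $h(x)=x^{n}$. Expanding $x^{n}=\sum_{m}\stirling{n}{m}x^{\underline{m}}$ in falling factorials and using $\Delta^{k}x^{\underline{m}}=m^{\underline{k}}x^{\underline{m-k}}$ together with $x^{\underline{j}}\big|_{x=1}\in\{0,1\}$, only the terms $m=k$ and $m=k+1$ survive, yielding $\stirling{n}{k}+(k+1)\stirling{n}{k+1}$, which equals $\stirling{n+1}{k+1}$ by the Stirling recurrence.
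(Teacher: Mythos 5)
Your proof is correct in both variants, and it takes a genuinely different route from the paper's. The paper's proof expands $(i+1)^n$ by the binomial theorem and swaps the order of summation, turning the left-hand side into $\sum_{j=0}^{n}\binom{n}{j}\stirling{j}{k}$ (recognizing the inner sum via the same classical explicit formula you invoke, but at parameters $(j,k)$); it then finishes combinatorially: choosing a $j$-element subset of $\{1,\ldots,n\}$ and partitioning it into $k$ blocks is the same as partitioning $\{1,\ldots,n+1\}$ into $k+1$ blocks, where the block containing $n+1$ collects exactly the unchosen elements. Your first argument short-circuits that detour: you apply the explicit formula directly at $(n+1,k+1)$, shift the summation index, and use the absorption identity $\binom{k+1}{j}=\frac{k+1}{j}\binom{k}{j-1}$ to match the two sums term by term, with no intermediate identity required. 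Your finite-difference variant is also sound: the left-hand side is $\frac{1}{k!}(\Delta^k h)(1)$ for $h(x)=x^n$, the falling-factorial expansion leaves only the terms $m=k$ and $m=k+1$, and the Stirling recurrence $\stirling{n+1}{k+1}=\stirling{n}{k}+(k+1)\stirling{n}{k+1}$ closes the argument. As for what each approach buys: the paper's route isolates the identity $\sum_{j}\binom{n}{j}\stirling{j}{k}=\stirling{n+1}{k+1}$, which has independent combinatorial content and explains \emph{why} the lemma is true by a bijective count; your first route is the most economical purely algebraic verification; and your second route is attractive in that it leans on the Stirling recurrence rather than on any counting argument, so it is self-contained once the falling-factorial calculus is set up.
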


\begin{proof}
  Expand $(i+1)^n$ on the left-hand side and swap the order of
  summation to get
  \begin{align*}
    \dfrac{1}{k!} \sum_{i=0}^k (-1)^{k-i} {k \choose i} (i+1)^n 
  ~=~  
  \sum_{j=0}^n {n \choose j}  \dfrac{1}{k!} \sum_{i=0}^k (-1)^{k-i} {k \choose i} i^j 
  ~=~ \sum_{j=0}^n {n \choose j} \stirling{j}{k},
\end{align*}
where the last identity follows from a well known expression for the
Stirling numbers of the second kind (see ~\cite[Equation 1.94a]{ec1}).
The last sum here is the number of ways to choose a partition of a
$j$-element subset of $\{1,\ldots,n\}$ into $k$ blocks, for any $j$.
This is the same as the number of ways to partition the set
$\{1,\ldots,n+1\}$ into $k+1$ blocks, as the block which contains
$n+1$ represents those elements that were not chosen to be in any of
the $k$ blocks.  That number is therefore $\stirling{n+1}{k+1}$.
\end{proof}

\begin{theorem} \label{ctrt} For all $n\geq 1$, $\trace(\RRmatrix{(W(n))}{}) = 1$.
\end{theorem}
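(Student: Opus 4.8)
The plan is to sum the diagonal entries provided by Corollary~\ref{bermuda} over all diagrams of $W(n)$ and collapse the result into a single alternating sum indexed by colourings. Recall that each $D_\pi \in W(n)$ is encoded by $\pi \in (\pm 1)^n$, and that $Y(\pi,\pi) = (\emptyset, Y_2, \{1,\ldots,n\}-Y_2, \emptyset)$ with $Y_2 = \{i : \pi_i = -1\}$; hence, as $\pi$ ranges over $W(n)$, the set $Y_2$ ranges over \emph{all} subsets of $\{1,\ldots,n\}$. First I would substitute Corollary~\ref{bermuda} into $\trace(\RRmatrix{(W(n))}{}) = \sum_{\pi} \RRmatrix{(W(n))}{D_\pi,D_\pi}$ and interchange the order of summation, pushing the sum over $Y_2$ innermost. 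For a fixed $c \in \Colours{n+1}{k}$ the triple $(\Des(c),\Equ(c),\Asc(c))$ partitions $\{1,\ldots,n\}$, and $c$ contributes to the $Y_2$-term precisely when $\Des(c)\subseteq Y_2$ and $\Asc(c)\cap Y_2 = \emptyset$, while the plateau positions $\Equ(c)$ may fall on either side. Thus $c$ is counted by exactly $2^{\equ(c)}$ of the subsets $Y_2$, yielding the key reduction
$$\trace(\RRmatrix{(W(n))}{}) \;=\; \sum_{k\geq 1} \frac{(-1)^{k-1}}{k} \sum_{c \in \Colours{n+1}{k}} 2^{\equ(c)}.$$
I expect this multiplicity bookkeeping to be the crux: the insight is that summing over all diagrams converts the containment constraints on $Y_2$ into a free binary choice at each plateau, which is what produces the clean weight $2^{\equ(c)}$.

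The second step evaluates the inner weighted count. Reading a sequence in $[k]^{n+1}$ from left to right, assigning weight $2$ to each equal adjacent pair and $1$ otherwise, a direct count gives $\sum_{c \in [k]^{n+1}} 2^{\equ(c)} = k(k+1)^n$, since from any colour there is one weight-$2$ continuation and $k-1$ weight-$1$ continuations. Classifying these sequences by the size $j$ of their image gives $k(k+1)^n = \sum_{j}\binom{k}{j}\sum_{c\in\Colours{n+1}{j}} 2^{\equ(c)}$, and binomial inversion yields $\sum_{c\in\Colours{n+1}{k}} 2^{\equ(c)} = \sum_j (-1)^{k-j}\binom{k}{j} j(j+1)^n$. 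Writing $j(j+1)^n = (j+1)^{n+1} - (j+1)^n$ and applying Lemma~\ref{helpfulstirling} twice turns this into
$$\sum_{c \in \Colours{n+1}{k}} 2^{\equ(c)} \;=\; k!\left(\stirling{n+2}{k+1} - \stirling{n+1}{k+1}\right).$$

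Finally I would substitute this closed form into the reduction and simplify using the Stirling recurrence $\stirling{n+2}{k+1} = \stirling{n+1}{k} + (k+1)\stirling{n+1}{k+1}$, so that $\stirling{n+2}{k+1} - \stirling{n+1}{k+1} = \stirling{n+1}{k} + k\stirling{n+1}{k+1}$. This splits the trace as $\sum_{k\geq 1}(-1)^{k-1}(k-1)!\stirling{n+1}{k} + \sum_{k\geq 1}(-1)^{k-1}k!\stirling{n+1}{k+1}$. Both pieces are governed by the single identity $\sum_{k\geq 1}(-1)^{k-1}(k-1)!\stirling{m}{k} = [m=1]$, which follows from the exponential generating function computation $\sum_{m\geq 1}\bigl(\sum_{k}(-1)^{k-1}(k-1)!\stirling{m}{k}\bigr)x^m/m! = \sum_{k\geq 1}\tfrac{(-1)^{k-1}}{k}(e^x-1)^k = \log(e^x) = x$. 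For $n\geq 1$ the first piece is $[n+1=1]=0$; after the index shift $k \mapsto k-1$ the second piece equals $\sum_{j\geq 2}(-1)^{j}(j-1)!\stirling{n+1}{j}$, and since the full sum over $j\geq 1$ of the same summand is $-[n+1=1]=0$ while its $j=1$ term is $-1$, the second piece equals $1$. Hence $\trace(\RRmatrix{(W(n))}{}) = 0 + 1 = 1$, as claimed. Once the $2^{\equ(c)}$ reduction is secured, the remaining steps are routine Stirling-number manipulations, so the multiplicity count in the first paragraph is the only genuinely delicate point.
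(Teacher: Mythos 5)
Your proof is correct, and its crux step is identical to the paper's: summing Corollary~\ref{bermuda} over all $2^n$ diagrams, noting that $Y_2$ then ranges over all subsets of $[n]$, and that a fixed colouring $c$ is counted for exactly the $2^{\equ(c)}$ subsets $Y_2$ with $\Des(c)\subseteq Y_2\subseteq [n]-\Asc(c)$; the paper performs the same interchange, phrased as counting the sets $Y_2$ sandwiched between a fixed descent set $A$ and $[n]-B$. After that your route genuinely diverges in both computational stages. To evaluate $\sum_{c\in\Colours{n+1}{k}}2^{\equ(c)}$, the paper decomposes each colouring as $c=\langle w,A\rangle$ with $w\in\Keys{m+1}{k}{}$ and invokes Lemma~\ref{keyslemma}, while you count weighted sequences in $[k]^{n+1}$ directly (giving $k(k+1)^n$ by the one-step recursion) and apply binomial inversion; both land on $\sum_i(-1)^{k-i}\binom{k}{i}\,i(i+1)^n$, and your version is more self-contained since it bypasses Lemma~\ref{keyslemma} entirely. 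For the final summation, the paper rewrites $\frac{(-1)^{k-1}}{k}(-1)^{k-v}v\binom{k}{v}$ as $(-1)^{v-1}\binom{k-1}{v-1}$, swaps the order of summation, applies the hockey-stick identity, and uses Lemma~\ref{helpfulstirling} just once at $k=n+1$, where the resulting Stirling number $\stirling{n+1}{n+2}$ vanishes. You instead apply Lemma~\ref{helpfulstirling} twice to get the closed form $k!\left(\stirling{n+2}{k+1}-\stirling{n+1}{k+1}\right)$ --- which is exactly the coefficient appearing in the paper's very next theorem on $\trace(\MMmatrix{(W(n))}{(x)}{})$ --- and then finish with the Stirling recurrence and the identity $\sum_{k\geq 1}(-1)^{k-1}(k-1)!\stirling{m}{k}=0$ for $m\geq 2$ (and $=1$ for $m=1$), justified by the generating-function computation $\sum_{k\geq 1}\tfrac{(-1)^{k-1}}{k}(e^x-1)^k=\log e^x=x$; all of these steps check out. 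In effect your endgame derives the web-colouring-matrix trace formula first and then resums it against the weights $(-1)^{k-1}/k$, which buys a unified treatment of this theorem and the one following it, at the price of one extra Stirling identity; the paper's binomial/hockey-stick manipulation is shorter but tailored specifically to this alternating sum.
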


\begin{proof} Using Corollary~\ref{bermuda} we have:
  \begin{align*}
    \trace(\RRmatrix{(W(n))}{}) 
    &= \sum_{\pi \in (\pm 1)^n} \RRmatrix{(W(n))}{D_{\pi},D_{\pi}} \\
    &= \sum_{\pi \in (\pm 1)^n} \sum_{k=1}^{n+1} \dfrac{(-1)^{k-1}}{k} \sum_{A \subseteq Y_2 = Y_2(\pi,\pi) \atop B \subseteq [n]-Y_2} \weuler_{n,k} (A,[n]-A-B,B)\\
    &= \sum_{Y_2\subseteq [n]} \sum_{k=1}^{n+1} \dfrac{(-1)^{k-1}}{k} \sum_{A \subseteq Y_2 \atop B \subseteq [n]-Y_2} \weuler_{n,k} (A,[n]-A-B,B)\\
    &=  \sum_{k=1}^{n+1} \dfrac{(-1)^{k-1}}{k}  \sum_{A \subseteq [n]\atop B \subseteq [n]-A} \sum_{Y_2: A \subseteq Y_2 \subseteq [n]-B} \weuler_{n,k} (A,[n]-A-B,B)\\
    &=  \sum_{k=1}^{n+1} \dfrac{(-1)^{k-1}}{k}  \sum_{A \subseteq [n]\atop B \subseteq [n]-A} \weuler_{n,k} (A,[n]-A-B,B) 2^{n-|A|-|B|}.
  \end{align*}
The inner sum is 
\begin{align*}
  {\sum_{A \subseteq [n]\atop B \subseteq [n]-A} \weuler_{n,k} (A,[n]-A-B,B) 2^{n-|A|-|B|}} ~
  &= \sum_{c=\langle w,A \rangle \in \Colours{n+1}{k}} 2^{\equ(c)}\\
  &=\sum_{m=0}^n {n \choose n-m} 2^{n-m} |\Keys{m+1}{k}{}| \\
  &= \sum_{m=0}^n {n \choose n-m} 2^{n-m}  \sum_{i=0}^k (-1)^{k-i} {k \choose i} i (i-1)^m \\
  &=  \sum_{i=0}^k (-1)^{k-i}  {k \choose i} i  (i+1)^n.
\end{align*}
Thus
\begin{align*}
  \trace(\RRmatrix{(W(n))}{})
  &= \sum_{k=1}^{n+1} \dfrac{(-1)^{k-1}}{k} \sum_{v=1}^k (-1)^{k-v} v {k \choose v} (v+1)^n \\ 
  &= -\sum_{v=1}^{n+1} (-1)^v  (v+1)^n \sum_{k=v}^{n+1} { k-1 \choose v-1} \\
  &= -\sum_{v=1}^{n+1} (-1)^v  (v+1)^n {n+1\choose v}\\
  &= 1, 
\end{align*}
by using Lemma~\ref{helpfulstirling} with $k=n+1$.
\end{proof}

\begin{theorem} For all $n\geq 1$,
  $\displaystyle\trace(\MMmatrix{(W(n))}{(x)}{}) = \sum_{k=1}^{n+1}
  x^k k! \left( \stirling{n+2}{k+1} - \stirling{n+1}{k+1} \right)$.
\end{theorem}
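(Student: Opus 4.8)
The plan is to mirror the proof of Theorem~\ref{ctrt} almost verbatim, replacing the web-mixing weight $(-1)^{k-1}/k$ by the web-colouring weight $x^k$, and then to convert the resulting sum into Stirling numbers. First I would record the web-colouring analogue of Corollary~\ref{bermuda}: since $Y(\pi,\pi)=(\emptyset,Y_2,[n]-Y_2,\emptyset)$, Theorem~\ref{casetwocount} together with the definition $\MMmatrix{(W)}{(x)}{D_1,D_2}=\sum_{k\geq 1}x^k f(D_1,D_2,k)$ gives
$$\MMmatrix{(W(n))}{(x)}{D_{\pi},D_{\pi}} = \sum_{k\geq 1} x^k \sum_{A\subseteq Y_2 \atop B\subseteq [n]-Y_2} \weuler_{n,k}(A,[n]-A-B,B),$$
where $Y_2=Y_2(\pi,\pi)$.

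Next I would sum over all $\pi\in(\pm 1)^n$. As each subset $Y_2\subseteq[n]$ arises from exactly one $\pi$, the sum over $\pi$ becomes a sum over $Y_2\subseteq[n]$, and exactly as in the proof of Theorem~\ref{ctrt} one interchanges the order of summation so that $A$ and $B$ range over disjoint subsets of $[n]$ while $Y_2$ runs over the $2^{n-|A|-|B|}$ sets with $A\subseteq Y_2\subseteq[n]-B$. This yields
$$\trace(\MMmatrix{(W(n))}{(x)}{}) = \sum_{k=1}^{n+1} x^k \sum_{A\subseteq[n] \atop B\subseteq[n]-A} \weuler_{n,k}(A,[n]-A-B,B)\, 2^{n-|A|-|B|}.$$
The inner double sum is precisely the quantity already evaluated in the proof of Theorem~\ref{ctrt}, namely $\sum_{i=0}^k(-1)^{k-i}\binom{k}{i}\,i\,(i+1)^n$, so no new combinatorial work is needed here; I would simply cite that computation.

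Finally, to reach the stated Stirling form, I would split $i(i+1)^n=(i+1)^{n+1}-(i+1)^n$, so that
$$\sum_{i=0}^k(-1)^{k-i}\binom{k}{i}\,i\,(i+1)^n = \sum_{i=0}^k(-1)^{k-i}\binom{k}{i}(i+1)^{n+1} - \sum_{i=0}^k(-1)^{k-i}\binom{k}{i}(i+1)^n,$$
and then apply Lemma~\ref{helpfulstirling} to each sum (with exponents $n+1$ and $n$ respectively) to obtain $k!\bigl(\stirling{n+2}{k+1}-\stirling{n+1}{k+1}\bigr)$. Substituting this back and summing over $k$ from $1$ to $n+1$ gives the claimed formula. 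The proof carries no real obstacle: the heavy lifting---the interchange of summations and the evaluation of the weighted $\weuler$ sum---is identical to Theorem~\ref{ctrt}, and the only genuinely new step is the elementary identity $i(i+1)^n=(i+1)^{n+1}-(i+1)^n$ that lets Lemma~\ref{helpfulstirling} apply twice.
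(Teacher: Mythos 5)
Your proposal is correct and follows essentially the same route as the paper's own proof: the paper likewise reduces the trace to $\sum_{k}x^k\sum_{A,B}\weuler_{n,k}(A,[n]-A-B,B)\,2^{n-|A|-|B|}$ by citing the argument of Theorem~\ref{ctrt}, reuses that proof's evaluation of the inner sum as $\sum_{i=0}^k(-1)^{k-i}\binom{k}{i}\,i\,(i+1)^n$, and then splits $i(i+1)^n=(i+1)^{n+1}-(i+1)^n$ to apply Lemma~\ref{helpfulstirling} twice. No meaningful difference in approach.
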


\begin{proof}
  As in the proof of Theorem~\ref{ctrt},
  \begin{align*}
  \trace(\MMmatrix{(W(n))}{(x)}{}) ~
  &=~ \sum_{\pi \in (\pm 1)^n} \MMmatrix{(W(n))}{(x)}{D_{\pi},D_{\pi}} \\
  &=~  \sum_{k=1}^{n+1} x^k  \sum_{A \subseteq [n]\atop B \subseteq [n]-A} \weuler_{n,k} (A,[n]-A-B,B) 2^{n-|A|-|B|}.
\end{align*}
The inner sum may be written
\begin{align*}
  \lefteqn{\sum_{A \subseteq [n]\atop B \subseteq [n]-A} \weuler_{n,k} (A,[n]-A-B,B) 2^{n-|A|-|B|} }\\
  &= \sum_{i=0}^k (-1)^{k-i}  {k \choose i} i  (i+1)^n \\
  &= \sum_{i=0}^k (-1)^{k-i}  {k \choose i}  (i+1)^{n+1}  - \sum_{i=0}^k (-1)^{k-i}  {k \choose i}  (i+1)^{n}  \\
  &= k! \left( \stirling{n+2}{k+1} - \stirling{n+1}{k+1} \right),
\end{align*}
where the last identity follows from Lemma~\ref{helpfulstirling}. Thus
\begin{align*}
  \trace(\MMmatrix{(W(n))}{(x)}{}) ~&=~ \sum_{k=1}^{n+1} x^k  k! \left( \stirling{n+2}{k+1} - \stirling{n+1}{k+1} \right). \qedhere
\end{align*}
\end{proof}

\section{Case 3: A web world with $\Pegs(D)=(2,2,\ldots,2)$} \label{casethree}
In this section we consider a variant of the case in the previous
section.  This variant consists of inserting one more edge from peg 1
to peg $n$.  We will encode configurations as we did in the previous
section.  Fix $n \in \mathbb{N}$ and let $W(n)$ be the collection of
all web diagrams
\begin{align}
  D ~&=~ \{ e_i = (i,i+1,x_i,y_{i+1}) ~:~ 1 \leq i <n \} \cup \{ e_n=(1,n,y_1,x_n)\} \label{casethreetype}
\end{align}
where $\{x_i,y_i\}=\{1,2\}$ for all $1\leq i \leq n$.  So $W(n)$ is
the collection of web diagrams on $n$ pegs, consisting of $n$ edges
whose endpoints are on adjacent pegs, and with an edge between peg 1
and peg $n$.  Every such diagram can be encoded by a sequence $\pi =
(\pi(1),\ldots \ \pi(n)) \in (\pm 1)^n$ where $\pi(i) = x_i - y_i$ for
all $1\leq i \leq n$.

\begin{figure}[h!]
  \includegraphics{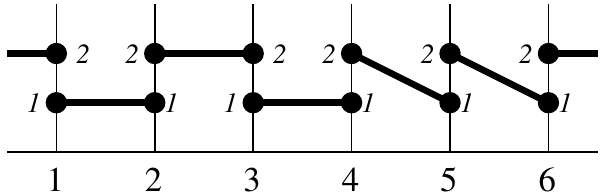}
  \caption{\label{figurecasethree}An example of a web diagram in
    $W(6)$ which is of the form given in Equation~(\ref{casethreetype}).
    The edge $e_6= (1,6,2,2)$ from peg $1$ to peg $6$ is illustrated
    in two parts on either extremities of the diagram.  This
    diagram is encoded by the sequence $\pi = (-1,+1,-1,+1,+1,+1)$.}
\end{figure}

Given $c \in \Colours{n}{k}$ and $D_{\pi} \in W(n)$, let $\Recon
{D_{\pi}}{c}$ be the web diagram that results by colouring edge $e_i$
with colour $c(i)$, for all $i$, and reconstructing.  Given $c \in \Colours{n}{k}$,
let $\DEAS(c) = (\Dess(c),\Equs(c),\Ascs(c))$ where 
\begin{align*}
  \Dess(c) &=\{1\leq i \leq n: c(i)>c({i+1}) \mbox{ with }c(n+1):=c(1)\},\\
  \Equs(c) &= \{1\leq i \leq n : c(i)=c({i+1})\mbox{ with }c(n+1):=c(1)\}, \mbox{ and}\\
  \Ascs(c) &= \{1\leq i \leq n: c(i)<c({i+1})\mbox{ with }c(n+1):=c(1)\}.
\end{align*}
Let $\dess(c)$, $\equs(c)$ and $\ascs(c)$ be the cardinalities of
the sets $\Dess(c)$, $\Equs(c)$ and $\Ascs(c)$, respectively.  Let
$\Weulers_{n,k}(X_1,X_2,X_3) = \{ c \in \Colours{n}{k} :
\DEAS(c)=(X_1,X_2,X_3) \}$, the set of colourings of $\Colours{n}{k}$
whose descent, plateau and ascents sets are given by $X_1$, $X_2$ and
$X_3$, respectively.  Set $\weulers_{n,k}(X_1,X_2,X_3) =
|\Weulers_{n,k}(X_1,X_2,X_3)|$.

Given diagrams $D_{\pi},D_{\sigma} \in W(n)$, let $Y(\pi,\sigma) = (
Y_1(\pi,\sigma), Y_2(\pi,\sigma), Y_3(\pi,\sigma), Y_4(\pi,\sigma))$
be an ordered partition of $\{1,\ldots,n\}$ where the $Y$'s are as
defined in Case 2.
\begin{theorem}\label{casethreecount}
  Let $k \in \mathbb{N}$. Let $D_{\pi},D_{\sigma} \in W(n)$ and
  suppose that $Y(\pi,\sigma) = (Y_1,Y_2,Y_3,Y_4)$.  Then
  \begin{align*}
  F(D_{\pi},D_{\sigma},k) &= \bigcup_{A \subseteq Y_2 \atop B \subseteq Y_3} \Weulers_{n,k}(Y_1 \cup A, Y_2\cup Y_3 - (A \cup B), Y_4 \cup B)\\
  f(D_{\pi},D_{\sigma},k) &= \sum_{A \subseteq Y_2 \atop B \subseteq Y_3} \weulers_{n,k}(Y_1 \cup A, Y_2\cup Y_3 - (A \cup B), Y_4 \cup B).
\end{align*}
\end{theorem}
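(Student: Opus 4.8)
The plan is to prove Theorem~\ref{casethreecount} by mirroring the proof of Theorem~\ref{casetwocount}, the one essential new feature being the wrap-around edge $e_n$ joining peg $1$ and peg $n$. This edge turns the linear chain of Case~2 into a cycle, and it is exactly this closure that forces the cyclic statistics $\DEAS$ (with the convention $c(n+1):=c(1)$) to appear in place of the linear statistics $\DEA$ used in Case~2.

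First I would record the local effect of a colouring at a single peg. In the cyclic labelling, the two edges incident with peg $j$ are $e_{j-1}$ and $e_j$ (with $e_0:=e_n$), and $\pi(j)=+1$ holds precisely when $e_j$ sits above $e_{j-1}$ on peg $j$; the same reading applies to $\sigma$. Since $\Recon{D_\pi}{c}$ stacks the colour classes $\rel(D_c(1)),\ldots,\rel(D_c(\ell))$ from bottom to top, and $\rel$ preserves the original relative heights within a single colour class, the reconstructed order of $e_{j-1}$ and $e_j$ on peg $j$ is governed solely by comparing the colours of these two edges: a strict colour inequality forces the lower-coloured edge to sit below, while equal colours leave the original $\pi$-order intact. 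These are exactly the cyclic analogues of the four local pictures drawn in the proof of Theorem~\ref{casetwocount}.

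Next I would convert these local rules into membership conditions on the cyclic descent, plateau and ascent sets. Reading off the four cases of $(\pi_j,\sigma_j)$ just as in Case~2 gives: for $j\in Y_1$ the relevant colours must form a (cyclic) descent; for $j\in Y_2$ a descent or a plateau; for $j\in Y_3$ an ascent or a plateau; and for $j\in Y_4$ an ascent. Because $\Dess(c)$, $\Equs(c)$ and $\Ascs(c)$ partition the $n$ cyclic bonds, these four containments are jointly equivalent to $\Dess(c)=Y_1\cup A$, $\Ascs(c)=Y_4\cup B$, and $\Equs(c)=(Y_2\cup Y_3)-(A\cup B)$ for some $A\subseteq Y_2$ and $B\subseteq Y_3$. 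Summing over the disjoint choices of $A$ and $B$ then yields the union formula for $F(D_\pi,D_\sigma,k)$, and taking cardinalities gives the stated formula for $f(D_\pi,D_\sigma,k)$.

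The step I expect to be the main obstacle is the cyclic bookkeeping around peg $1$ and peg $n$. In the linear Case~2 the two end pegs carry a single edge and impose no constraint, whereas here $e_n$ couples the colour $c(n)$ back to $c(1)$, so I must verify that the local rule at the wrap-around peg is captured by the cyclic bond of $\DEAS$ rather than by a boundary term. Concretely, one checks that the peg-to-bond correspondence is the rotation $j\mapsto j-1 \pmod n$, so that the four containments above are read against the cyclic statistics; the invariance of $\weulers_{n,k}$ under this rotation of its index sets is what reconciles the count with the displayed formula. Once this single wrap-around verification is in place, the remainder of the argument is identical to that of Case~2.
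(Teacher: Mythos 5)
Your proposal is correct and takes the same route as the paper, which in fact omits this proof entirely, remarking only that it is almost identical to that of Theorem~\ref{casetwocount} except for accounting for the colouring of the solitary edge joining pegs $1$ and $n$. Your explicit handling of that one new point --- the peg-to-bond correspondence $j\mapsto j-1 \pmod n$ together with the invariance of $\weulers_{n,k}$ under simultaneous cyclic rotation of its three index sets --- is exactly the detail needed to reconcile the local analysis at each peg with the displayed formula for $f(D_{\pi},D_{\sigma},k)$, and is more care than the paper itself supplies.
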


We omit the proof of Theorem~\ref{casethreecount} because it is almost
identical to the proof of Theorem~\ref{casetwocount}.  The only places
in which they differ is in taking account of the colouring of the
solitary edge between peg 1 and peg $n$.

\begin{theorem} \label{case3trace} 
For all $n\geq 1$,
$\trace(\RRmatrix{(W(n))}{}) = n+1$.
\end{theorem}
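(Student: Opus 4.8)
The plan is to mirror the proof of Theorem~\ref{ctrt} for the cyclic setting, the only essential change being that the linear plateau statistic is replaced by its cyclic counterpart $\equs$. First I would record the diagonal analogue of Corollary~\ref{bermuda}: since $Y(\pi,\pi)=(\emptyset,Y_2,\{1,\ldots,n\}-Y_2,\emptyset)$ exactly as in Case~2, Theorem~\ref{casethreecount} gives
$$\RRmatrix{(W(n))}{D_\pi,D_\pi} = \sum_{k=1}^{n} \frac{(-1)^{k-1}}{k} \sum_{A \subseteq Y_2 \atop B \subseteq [n]-Y_2} \weulers_{n,k}(A,[n]-A-B,B).$$
Summing over $\pi \in (\pm 1)^n$ amounts to summing over $Y_2 \subseteq [n]$; interchanging the order of summation so that the innermost sum is over $Y_2$ with $A \subseteq Y_2 \subseteq [n]-B$ produces a factor $2^{n-|A|-|B|}$, and the resulting sum over disjoint pairs $(A,B)$ collapses, upon setting $A=\Dess(c)$ and $B=\Ascs(c)$, to
$$\trace(\RRmatrix{(W(n))}{}) = \sum_{k=1}^{n} \frac{(-1)^{k-1}}{k}\, S(n,k), \qquad S(n,k) := \sum_{c \in \Colours{n}{k}} 2^{\equs(c)}.$$

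The heart of the argument is evaluating $S(n,k)$, and this is where Case~3 genuinely departs from Case~2. I would use the decomposition $c=\langle w,A\rangle$ of Section~5, where $w \in \Keys{m}{k}{}$ is the reduced word obtained by collapsing maximal constant runs and $|A|=n-m$ records the linear plateaus, so that $\equ(c)=n-m$. The cyclic correction is the clean observation that $\equs(c)=\equ(c)+[c(1)=c(n)]=(n-m)+[w(1)=w(m)]$, since the first and last letters of $c$ are precisely the first and last letters of $w$. Grouping the $\binom{n-1}{m-1}$ colourings that reduce to a common $w$ and splitting the keys according to whether $w(1)=w(m)$ gives
$$S(n,k)=\sum_{m=1}^{n}\binom{n-1}{m-1}2^{n-m}\bigl(|\Keys{m}{k}{}|+|\Keys{m}{k}{=}|\bigr),$$
because $|\Keys{m}{k}{\neq}|+2|\Keys{m}{k}{=}| = |\Keys{m}{k}{}|+|\Keys{m}{k}{=}|$. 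Substituting the formulas of Lemma~\ref{keyslemma}, the inner sums over $m$ are binomial evaluations of the shape $\sum_j \binom{n-1}{j} 2^{n-1-j}(i-1)^j=(i+1)^{n-1}$, and after collecting terms I expect to obtain $S(n,k)=\sum_i (-1)^{k-i}\binom{k}{i}(i+1)^n + [k=1]$, i.e.\ $S(n,k)=k!\stirling{n+1}{k+1}+[k=1]$ by Lemma~\ref{helpfulstirling}. The stray $[k=1]$ comes from the term $\sum_i(-1)^{k-i}\binom{k}{i}(i-1)$, which is nonzero only for $k=1$; it reflects the all-constant colouring, whose cyclic plateau count is $n$ rather than $n-1$.

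Feeding this into the trace, the $[k=1]$ term contributes $1$, and it remains to show
$$\sum_{k=1}^{n}(-1)^{k-1}(k-1)!\stirling{n+1}{k+1} = n.$$
I would prove this by exponential generating functions: writing $b_N$ for the analogous alternating sum with $\stirling{N}{j}$ and using $\sum_N \stirling{N}{j} x^N/N! = (e^x-1)^j/j!$ together with $\tfrac{(j-2)!}{j!}=\tfrac{1}{j(j-1)}$, the generating function telescopes to $\sum_{j\geq 2} u^j/(j(j-1)) = (1-u)\ln(1-u)+u$ with $u=1-e^x$, which simplifies to $xe^x-(e^x-1)$; extracting coefficients yields $b_N=N-1$, hence the sum equals $n$ for $N=n+1$. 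Combining, $\trace(\RRmatrix{(W(n))}{})=1+n=n+1$.

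The main obstacle I anticipate is the bookkeeping around the cyclic correction term: getting the off-by-one in $\equs$ versus $\equ$ right (the $[c(1)=c(n)]$ factor), correctly translating it into the combination $|\Keys{m}{k}{}|+|\Keys{m}{k}{=}|$, and tracking the resulting $[k=1]$ anomaly all the way through to the final trace. The closing Stirling identity is routine once phrased via generating functions, but the value $n+1$ (as opposed to $1$ in Case~2) emerges precisely from that cyclic correction, so it must be carried carefully rather than discarded.
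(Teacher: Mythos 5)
Your proposal is correct, and it tracks the paper's own proof through the combinatorial core: the same reduction of the trace to $\sum_{k}\frac{(-1)^{k-1}}{k}\sum_{c\in\Colours{n}{k}}2^{\equs(c)}$ via Theorem~\ref{casethreecount} and the interchange of summation over $Y_2$, the same key observation $\equs(c)=\equ(c)+1$ exactly when $c(1)=c(n)$, and the same evaluation of the inner sum through the decomposition $c=\langle w,A\rangle$ and Lemma~\ref{keyslemma} (your regrouping $|\Keys{m}{k}{\neq}|+2|\Keys{m}{k}{=}|=|\Keys{m}{k}{}|+|\Keys{m}{k}{=}|$ is just a repackaging of the paper's two separate sums); both you and the paper land on $S(n,k)=\sum_i(-1)^{k-i}{k\choose i}\bigl((i+1)^n+(i-1)\bigr)$. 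Where you genuinely diverge is the endgame. The paper keeps this form, swaps the $k$- and $i$-sums to get $\sum_{i=1}^n\frac{(-1)^{i+1}}{i}{n\choose i}\bigl((i+1)^n+i-1\bigr)$, expands $((i+1)^n+i-1)/i$, and finishes by the vanishing of $\stirling{j-1}{n}$ for $j-1<n$. You instead note that $\sum_i(-1)^{k-i}{k\choose i}(i-1)$ equals $1$ if $k=1$ and $0$ otherwise, so that Lemma~\ref{helpfulstirling} gives $S(n,k)=k!\stirling{n+1}{k+1}+[k=1]$ --- which is precisely the coefficient form of the paper's subsequent theorem $\trace(\MMmatrix{(W(n))}{(x)}{})=x+\sum_k x^k k!\stirling{n+1}{k+1}$, so you get that result as a free byproduct --- and then close with the identity $\sum_{k\geq 1}(-1)^{k-1}(k-1)!\stirling{n+1}{k+1}=n$, proved by exponential generating functions. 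I checked that computation: with $u=1-e^x$ the generating function is $(1-u)\ln(1-u)+u=xe^x-(e^x-1)$, whose coefficients are $N-1$, giving $n$ at $N=n+1$ as claimed. Both finishes are valid; yours isolates cleanly the ``cyclic anomaly'' at $k=1$ that separates this case from Theorem~\ref{ctrt}, at the cost of needing one extra Stirling identity that the paper's summation interchange avoids.
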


\begin{proof}
  $\begin{array}[t]{rcl}
\trace(\RRmatrix{(W(n))}{}) 
&=& \displaystyle \sum_{\pi \in (\pm 1)^n} \RRmatrix{(W(n))}{D_{\pi},D_{\pi}} \\
&=&\displaystyle  \sum_{\pi \in (\pm 1)^n} \sum_k \dfrac{(-1)^{k-1}}{k} 
	\sum_{A \subseteq Y_2 = Y_2(\pi,\pi)\atop B\subseteq [n]-Y_2} \weulers_{n,k} (A,[n]-A-B,B) 2^{n-|A|-|B|}.
      \end{array}$
      \ \\
      The inner sum is
      \begin{align*}
        \sum_{A \subseteq Y_2(\pi,\pi)\atop B\subseteq [n]-Y_2(\pi,\pi)} \weulers_{n,k} (A,[n]-A-B,B) 2^{n-|A|-|B|}
        &= \sum_{c \in \Colours{n}{k}} 2^{\equs(c)} \\
        &= \sum_{c \in \Colours{n}{k}\atop c(1)\neq c(n)} 2^{\equ(c)} + \sum_{c \in \Colours{n}{k}\atop c(1)= c(n)} 2^{1+\equ(c)}.
      \end{align*}
The first sum is 
\begin{align*}
  \sum_{c \in \Colours{n}{k}\atop c(1)\leq c(n)} 2^{\equ(c)} 
  &= \sum_{m=k}^n \sum_{w \in \Keys{m}{k}{\neq}} 2^{n-m} {n-1\choose m-1} \\
  &= \sum_{m=k}^n 2^{n-m} {n-1\choose m-1} | \Keys{m}{k}{\neq} |,
\end{align*}
and the second sum is
\begin{align*}
  \sum_{c \in \Colours{n}{k}\atop c(1)= c(n)} 2^{1+\equ(c)} 
  &= \sum_{m} \sum_{w \in \Keys{m}{k}{=}} 2^{1+n-m} {n-1\choose m-1} \\
  &= \sum_{m} 2^{1+n-m} {n-1\choose m-1} | \Keys{m}{k}{=} |.
\end{align*}
Adding the two equations above, and using the expressions given in Lemma~\ref{keyslemma}, we have
$$
\sum_{A \subseteq Y_2(\pi,\pi)\atop B\subseteq [n]-Y_2(\pi,\pi)}
\weulers_{n,k} (A,[n]-A-B,B) 2^{n-|A|-|B|} = \sum_{i=0}^k (-1)^{k-i}
{k \choose i} \left( (i+1)^n + (i-1)\right).$$ Replacing this into the
equation for the trace
\begin{align*}
  \trace(\RRmatrix{(W(n))}{}) 
  &= \sum_{k=1}^n \dfrac{(-1)^{k-1}}{k} \sum_{i=0}^k (-1)^{k-i} {k \choose i} \left( (i+1)^n + (i-1)\right)\\
  &= \sum_{i=1}^n \dfrac{(-1)^{i+1}}{i} {n \choose i} \left( (i+1)^n +i-1\right).
\end{align*}
Since $((i+1)^n+i-1)/i=1+\sum_{j=1}^n {n \choose j} i^{j-1}$,
\begin{align*}
  \trace(\RRmatrix{(W(n))}{}) 
  &= \sum_{i=1}^n (-1)^{i+1} {n \choose i} + \sum_{i,j=1}^n (-1)^{i+1} {n\choose i} {n \choose j} i^{j-1}\\
  &= 1+\sum_{j=1}^n {n \choose j} \sum_{i=1}^n (-1)^{i+1} {n \choose i} i^{j-1} \\
  &= 1+\sum_{j=1}^n {n \choose j} \left( -0^{j-1}(-1)^n + \sum_{i'=0}^n (-1)^{i'} {n \choose i'} (n-i')^{j-1}\right) \\
  &= 1+\sum_{j=1}^n {n \choose j} \left( -0^{j-1}(-1)^n + n! \stirling{j-1}{n}\right).
\end{align*}
As $j-1<n$, the Stirling number $\stirling{j-1}{n}$ is 0. The other
term in the parentheses is also zero except when $j=1$.  Thus
$$\trace(\RRmatrix{(W(n))}{}) 
= 1+{n\choose 1} (-1)^{n+1} (-1(-1)^n) = 1+n.\qedhere
$$
\end{proof}

\begin{theorem} For all $n \geq 1$,
  $\displaystyle\trace(\MMmatrix{(W(n))}{(x)}{}) = x+ \sum_{k=1}^{n+1}
  x^k k! \stirling{n+1}{k+1}$.
\end{theorem}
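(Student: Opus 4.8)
The plan is to run the argument in close parallel with the proof of Theorem~\ref{case3trace}, simply replacing the web-mixing weights $(-1)^{k-1}/k$ by the web-colouring weights $x^k$ and then finishing with a short generating-function computation. First I would write
$$
\trace(\MMmatrix{(W(n))}{(x)}{}) = \sum_{\pi \in (\pm 1)^n} \MMmatrix{(W(n))}{(x)}{D_{\pi},D_{\pi}} = \sum_{\pi \in (\pm 1)^n} \sum_{k\geq 1} x^k f(D_{\pi},D_{\pi},k),
$$
and then apply Theorem~\ref{casethreecount} at $\sigma=\pi$. Since $Y(\pi,\pi) = (\emptyset, Y_2, [n]-Y_2, \emptyset)$ with $Y_2 = Y_2(\pi,\pi)$, this expresses the diagonal entry as a sum of terms $\weulers_{n,k}(A,[n]-A-B,B)$ over $A \subseteq Y_2$ and $B \subseteq [n]-Y_2$.

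The second step is exactly the change-of-summation manipulation already performed in the proof of Theorem~\ref{case3trace}: summing over all $\pi\in(\pm1)^n$ is the same as summing over all $Y_2 \subseteq [n]$, and for fixed $A,B$ the number of admissible $Y_2$ (those with $A \subseteq Y_2 \subseteq [n]-B$) is $2^{n-|A|-|B|}$. This collapses the trace to
$$
\trace(\MMmatrix{(W(n))}{(x)}{}) = \sum_{k\geq 1} x^k \sum_{A \subseteq [n] \atop B \subseteq [n]-A} \weulers_{n,k}(A,[n]-A-B,B)\, 2^{n-|A|-|B|}.
$$
I would then simply quote the value of the inner sum computed inside the proof of Theorem~\ref{case3trace}, namely $\sum_{c \in \Colours{n}{k}} 2^{\equs(c)} = \sum_{i=0}^k (-1)^{k-i} {k \choose i}\big((i+1)^n + (i-1)\big)$; no new combinatorial identity is required here, because the inner sum does not involve the weights and is therefore identical to the one already evaluated.

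The remainder is purely algebraic. Substituting and splitting the bracket into its two pieces, the $(i+1)^n$ piece is handled by Lemma~\ref{helpfulstirling}, turning $\sum_{i=0}^k (-1)^{k-i}{k \choose i}(i+1)^n$ into $k!\stirling{n+1}{k+1}$ and producing the main sum $\sum_{k} x^k k!\stirling{n+1}{k+1}$. The one genuinely new point — and the step I expect to be the only subtlety — is the leftover correction $\sum_{i=0}^k(-1)^{k-i}{k \choose i}(i-1)$, which is precisely what distinguishes the cyclic Case~3 from Case~2. Treating the linear term $i$ and the constant separately and using the finite-difference identity $\sum_{i}(-1)^{k-i}{k \choose i} i^j = k!\stirling{j}{k}$ with $j=1$ and $j=0$, this correction equals $k!\stirling{1}{k} - k!\stirling{0}{k}$, which is $1$ when $k=1$ and $0$ otherwise. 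Hence the whole correction contributes only a single term $x$, giving $\trace(\MMmatrix{(W(n))}{(x)}{}) = x + \sum_{k=1}^{n} x^k k!\stirling{n+1}{k+1}$. Finally I would observe that the sum over $k$ effectively stops at $n$ (a diagram with $n$ edges admits no colouring with more than $n$ colours) and that extending the upper limit to $n+1$ is harmless since $\stirling{n+1}{n+2}=0$, which puts the answer in the stated form. The main obstacle is thus not a deep idea but keeping the $(i-1)$ correction term honest, since it is exactly the source of the extra isolated $x$.
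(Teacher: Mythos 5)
Your proposal is correct and takes essentially the same route as the paper: it reduces the trace, via the same change of summation over $Y_2$ used in Theorem~\ref{case3trace}, to the inner sum $\sum_{i=0}^k (-1)^{k-i}{k \choose i}\left((i+1)^n+(i-1)\right)$ already evaluated there, and then finishes with Lemma~\ref{helpfulstirling}. Your explicit finite-difference check that the $(i-1)$ correction contributes exactly the isolated term $x$ (only at $k=1$) is precisely the step the paper leaves implicit in its final line, so the two arguments coincide.
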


\begin{proof}
  As in the proof of the previous theorem,
  \begin{align*}
  \trace(\MMmatrix{(W(n))}{(x)}{}) 
  &= \sum_{\pi \in (\pm 1)^n} \MMmatrix{(W(n))}{(x)}{D_{\pi},D_{\pi}} \\
  &= \sum_{k=1}^{n+1} x^k  \sum_{A \subseteq [n]\atop B \subseteq [n]-A} \weulers_{n,k} (A,[n]-A-B,B) 2^{n-|A|-|B|}.\\
  &= \sum_{k=1}^{n+1} x^k \sum_{i=0}^k (-1)^{k-i} {k \choose i} \left( (i+1)^n + (i-1)\right)\\
  &= x+ \sum_{k=1}^{n+1} x^k k! \stirling{n+1}{k+1},
\end{align*}
by Lemma~\ref{helpfulstirling}.
\end{proof}

\section{Transitive web worlds}
In this section we consider a special class of web worlds that we term
{\it{transitive web worlds}}.  The defining property of these web worlds
is the simple condition (in terms of the web diagram) that every
peg (other than the first and last) is the right endpoint of at least
one edge and the left endpoint of at least one edge.

\begin{definition}
  Let $W$ be a web world and $D\in W$ with $\PegSet(D)=\{1,\ldots,n\}$.
  We will call $W$ {\it{transitive}} if each of the following hold;
  \begin{enumerate}
  \item[(i)] $(1,k) \in \PegpairsSet(D)$ for some $1<k\leq n$.
  \item[(ii)] $(k,n) \in \PegpairsSet(D)$ for some $1\leq k< n$.
  \item[(iii)] For all $k$ with $1<k<n$ there exists $z$ and $z'$ such that $1\leq z<k<z'\leq n$
	such that $(z,k)$ and $(k,z')$ are both in $\PegpairsSet(D)$.
  \end{enumerate}
\end{definition}

The condition for $W$ to be transitive is equivalent to $\Represent(W)$
being such that the only rows and columns of all zeros are the
leftmost column and the bottom row.

\begin{example} Consider all web worlds with exactly 3 edges. There
  are 30 of these. Only 5 are transitive, represented by the following
  matrices:

$$
\left(\begin{matrix}
    0 & 3 \\
    0 & 0
  \end{matrix}\right),
\qquad \left(\begin{matrix}
    0 & 2 & 0 \\
    0 & 0 & 1 \\
    0 & 0 & 0
  \end{matrix}\right),
\qquad \left(\begin{matrix}
    0 & 1 & 1 \\
    0 & 0 & 1 \\
    0 & 0 & 0
  \end{matrix}\right),
\qquad \left(\begin{matrix}
    0 & 1 & 0 \\
    0 & 0 & 2 \\
    0 & 0 & 0
  \end{matrix}\right),
\qquad \left(\begin{matrix}
    0 & 1 & 0 & 0\\
    0 & 0 & 1 & 0 \\
    0 & 0 & 0 & 1 \\
    0 & 0 & 0 & 0
  \end{matrix}\right).
$$
\end{example}

The matrices which represent transitive web worlds are in one-to-one
correspondence with the upper triangular matrices introduced in
~\cite{robert} (to go from the former to the latter remove the
leftmost column and the bottom row).  A striking property of the
latter class of matrices is their rich connection to several other
combinatorial objects, namely (2+2)-free partially ordered sets,
pattern avoiding permutations, Stoimenow matchings, and sequences of
integers called ascent sequences~\cite{tpt,composition,djk}.

Cases 2 and 3, in Sections \ref{case2} and \ref{casethree}, are
examples of transitive web worlds.  Results concerning the above
structures are readily applicable to transitive web worlds.  For example,
the generating function for the number of transitive web worlds according
to the number of edges, number of pegs, and number of distinct pairs
of pegs connected by edges, is given in ~\cite[Theorem 11]{remmel}.

\section{Further comments and open problems}
Results presented in this paper will be explained in a physics context in the forthcoming paper ~\cite{DGMSW}.
There are many questions outstanding in understanding the properties of these new structures. 
We list here a small collection of these questions, each of whose answers would represent a significant step forward in this new area.
Let $W$ be a web world and let $D$ and $E$ be web diagrams in $W$.

\begin{enumerate}
\item[(1)] In Theorem~\ref{thm33}, in order to apply this theorem we must
have that the web diagram $D$, when written as a sum of indecomposable web diagrams
$D=E_1 \oplus \ldots \oplus E_k$, is such that all of its constituent indecomposable web diagrams 
$(E_1,\ldots,E_k)$ are different. How can this theorem be extended to the case for when there are
repeated constituent web diagrams. The simplest such to analyse is the 
web diagram $D=\{(1,2,1,1),(1,2,2,2)\} = \{(1,2,1,1)\} \oplus \{(1,2,1,1)\}$.
\item[(2)] Can the off-diagonal terms $\MMmatrix{(W)}{(x)}{D,E}$ and
  $\RRmatrix{(W)}{D,E}$ be calculated in terms of mappings between the
  posets $P(D)$ and $P(E)$?
\item[(3)] Find a direct way to perform the calculations in
  Equations~(\ref{eastbound}) and~(\ref{westbound}) using $\Represent(W)$.
\item[(4)] Can the web worlds $W$ with $\trace(\RRmatrix{(W)}{})=1$ be
  characterized?
\item[(5)] Can one find other exactly solvable examples of web worlds
  than those presented here? By exactly solvable we mean that it is
  possible to give an expression for every entry of the matrices
  $\MMmatrix{(W)}{(x)}{}$ and $\RRmatrix{(W)}{}$.
\item[(6)] Can the relationship between the representing matrices
  $\Represent(W)$ for transitive web diagrams and $(2+2)$-free posets (or
  other combinatorial objects in bijection with these) be exploited in
  order to simplify calculations related to the trace?
\item[(7)] 
From the physics perspective (see ~\cite{WIM})  the main question is the determination 
of the left eigenvectors of the mixing matrix $\RRmatrix{}{}$.
The first step in this direction is to determine how many such 
eigenvectors have eigenvalue 1, which is the number of independent 
contributions to the exponent of the Eikonal amplitude from the given web world.
It is given by the rank of $\RRmatrix{}{}$ (which equals the trace of $\RRmatrix{}{}$).
This task was achieved here for certain classes of webs. 
The natural question is what is the general formula for 
this invariant given the entries of $\Represent (W)$.
\end{enumerate}


\end{document}